\theoremstyle{definition}  
\theoremstyle{plain}   
\newtheorem{thm}{Theorem}[section]
\newtheorem{Theorem}[thm]{Theorem}
\newtheorem{Lem}[thm]{Lemma}
\theoremstyle{definition}  
\newtheorem{Rem}[thm]{Remark}
\newtheorem{Example}[thm]{Example}
\theoremstyle{definition}  
\newtheorem{Definition}[thm]{Definition}
\newcommand{\ZZ}{\mathbb{Z}}
\newcommand{\CC}{\mathbb{C}}
\newcommand{\KK}{\mathbb{K}}
\newcommand{\LLL}{\mathcal{L}}
\newcommand{\dd}{\boldsymbol{d}}
\newcommand{\vv}{\boldsymbol{v}}
\newcommand{\ww}{\boldsymbol{w}}
\newcommand{\xx}{\boldsymbol{x}}
\newcommand{\zz}{\boldsymbol{z}}
\newcommand{\zero}{\boldsymbol{0}}
\newcommand{\Ann}{\operatorname{Ann}}
\newcommand{\diag}{\operatorname{diag}}
\DeclareSymbolFont{symbolsC}{U}{txsyc}{m}{n}
\DeclareMathSymbol{\MYPerp}{\mathrel}{symbolsC}{121}
\newbox{\MyTrashBox}
\begin{document}

\title[The eigenvalues of Hessian matrices]
{The eigenvalues of Hessian matrices of the complete and complete bipartite graphs}
\author{Akiko Yazawa}
\thanks{These research results were aided by the fund of Nagano Prefecture to
promote scientific activity. }
\address[Akiko Yazawa]{
Department of Science and Technology,
	Graduate School of Medicine, Science and Technology,
	Shinshu University,
	Matsumoto, Nagano, 390-8621, Japan 
}
\email{yazawa@math.shinshu-u.ac.jp}
\date{}
\keywords{The Hessian matrix; generating functions for spanning trees; complete graphs; complete bipartite graphs; graphic matroids; Kirchhoff polynomials}
\maketitle

\begin{abstract}
In this paper, 
we consider the Hessian matrices $H_{\Gamma}$ of the complete and complete bipartite graphs, 
and the special value of $\tilde H_{\Gamma}$ at $x_{i}=1$ for all $x_{i}$. 
We compute the eigenvalues of $\tilde H_{\Gamma}$. 
We show that one of them is positive and that the others are negative. 
In other words, the metric with respect to the symmetric matrix $\tilde H_{\Gamma}$ is Lorentzian. 
Hence those Hessian $\det (H_{\Gamma})$ are not identically zero. 
As an application, we show the strong Lefschetz property for the Artinian Gorenstein algebra associated to the graphic matroids of the complete and complete bipartite graphs with at most five vertices. 
\end{abstract}


\section{Introduction}\label{Introduction}

In \cite{MR3566530}, 
Maeno and Numata introduced algebras $Q/J_{M}$ and $A_{M}$ for a matroid $M$
to give an algebraic proof of the Sperner property for the lattice $\LLL(M)$ consisting of flats of $M$. 
The algebra $A_{M}$ is defined to be the quotient algebra of the ring of the differential polynomials by the annihilator of the base generating function of $M$. 
By definition, the algebra $A_{M}$ is an Artinian Gorenstein algebra. 
In \cite{MR3566530}, 
they show that $Q/J_{M}$ has the strong Lefshetz property in the narrow sense  (see Definition \ref{SLP} for the definition, see also \cite{MR3112920}) if and only if the lattice $\LLL(M)$ is modular, 
and  that $Q/J_{M}=A_{M}$ if and only if $\LLL(M)$ is modular. 
In \cite{MR3733101}, 
for a simple matroid $M$ with rank $r$ and the ground set $E$, Huh and Wang introduced a graded algebra $B^{\ast}(M)=\bigoplus_{p=0}^{r}B^{p}(M)$, 
which is isomorphic to $Q/J_{M}$. 
They show that the multiplication map 
\begin{align*}
\times L^{r-2p}: B^{p}(M)\to B^{r-p}(M) 
\end{align*}
is injective for $p\leq \frac{r}{2}$ and $L=\sum_{i\in E}y_{i}$. 

Maeno and Numata conjectured that the algebra $A_{M}$ has the strong Lefschetz
property for an arbitrary matroid $M$ in an extended abstract \cite{MR2985388} of the paper \cite{MR3566530}. 
In this paper, 
we discuss the strong Lefschetz property of the Artinian Gorenstein algebra $A_{M}$ for graphic matroids of the complete and the complete bipartite graphs. 

Let us recall the definition of $A_{M}$ in the case of a graphic matroids. 
Let $\Gamma$ be a connected graph with $N$ edges. 
Define $B_{\Gamma}$ to be the set of spanning trees in $\Gamma$. 
Then the \emph{graphic matroid} $M(\Gamma)$ of $\Gamma$ is a matroid whose ground set is the set $E(\Gamma)$ of edges $\Gamma$
and whose basis set is the set $B_{\Gamma}$ of spanning trees in $\Gamma$.  
We assign the variable $x_{i}$ to each edge $i$ of $\Gamma$. 
For the graph $\Gamma$, 
we define the weighted generating function $F_{\Gamma}$ for spanning trees in $\Gamma$, called \emph{Kirchhoff polynomial} of $\Gamma$, by 
\begin{align*}
F_{{\Gamma}}=\sum_{T \in B_{{\Gamma}}}\prod_{e \in E(T)} x_{e}. 
\end{align*}  
Then $A_{M(\Gamma)}$ is $\KK[x_{1}, \ldots, x_{N}]/\Ann(F_{\Gamma})$ (see Section \ref{The Lefschetz property for an algebra constructed from a graph} for the definition of $\Ann(F_{\Gamma})$).

There is a criterion for the strong Lefschetz property of a graded Artinian Gorenstein algebra (see Theorem \ref{criterion} for the detail). 
Roughly speaking, a graded Artinian Gorenstein algebra $A=\bigoplus_{k=0}^{s}A_{k}$ has the strong Lefshetz property if and only if the determinants of the $k$th Hessian matrices of the algebra do not vanish for all $k$, 
where the $k$th Hessian matrix is obtained from a linear basis for the homogeneous space $A_{k}$ of degree $k$. 

Now we define the matrix $H_{\Gamma}$ by 
\begin{align*}
H_{\Gamma}=\left(\frac{\partial}{\partial x_{i}}\frac{\partial}{\partial x_{j}}F_{\Gamma}\right)_{i,j\in E(\Gamma)}
\end{align*}
for a connected graph $\Gamma$. 
We call $H_{\Gamma}$ the \emph{Hessian matrix of the graph} $\Gamma$ and $\det (H_{\Gamma})$ the \emph{Hessian of the graph} $\Gamma$. 
If the determinant of $H_{\Gamma}$ does not vanish, 
then the set of all variables is a linear basis for the homogeneous space of degree one. 
Hence the Hessian matrix of the graph coincidences with the first Hessian matrix of the algebra $A_{M(\Gamma)}$. 

In this paper, 
we consider the Hessian matrices $H_{\Gamma}$ of the complete and complete bipartite graphs, 
and the special value of $\tilde H_{\Gamma}$ at $x_{i}=1$ for all $x_{i}$. 
We compute the eigenvalues of $\tilde H_{\Gamma}$. 
We show that one of them is positive and that the others are negative. 
In other words, the metric with respect to the symmetric matrix $\tilde H_{\Gamma}$ is Lorentzian. 
Hence those Hessian $\det (H_{\Gamma})$ are not identically zero. 
This implies the strong Lefschetz property of the Artinian Gorenstein algebra corresponding to the graphic matroid of the complete graph and the complete bipartite graph with at most five vertices. 

This article is organized as follows: 
In Section \ref{The eigenvectors and eigenvalues of block matrices}, 
we will calculate the eigenvectors and eigenvalues of some block matrices. 
Then we will compute the Hessians of the complete graph and the complete bipartite graph in Section \ref{Main results}. 
In Section \ref{The Lefschetz property for an algebra constructed from a graph}, we will discuss the strong Lefschetz property of the algebra $A_{M}$ corresponding to a graphic matroid. 



\section{The eigenvectors and eigenvalues of block matrices}\label{The eigenvectors and eigenvalues of block matrices} 

In this section, 
we give the eigenvectors and eigenvalues of some block matrices. 
We consider three kinds of block matrices $C, D$ and $M(A, \lambda, \dd)$ (Theorems \ref{big to small}, \ref{Dcharacteristic} and \ref{blockdeterminant}).

Let $l\in \ZZ$, $\dd=(d_{1}, d_{2}, \ldots, d_{l})$, and  $\delta=d_{1}+d_{2}+\cdots+d_{l}$. 
Let $A^{ij}$ be a $d_{i}\times d_{j}$ matrix. 
We consider the $\delta \times \delta$ matrix $A$ defined by 
\begin{align*}
A=\left(A^{ij}\right)_{1\leq i, j\leq l}
\end{align*}
with the $d_{i}\times d_{j}$ matrix $A^{ij}$.

\begin{Lem}\label{block親玉}
Let $A^{ij}\vv_{j}=\bar a_{ij}\vv_{i}$ and $\bar A=(\bar a_{ij})_{1\leq i, j\leq l}$, 
where $\vv_{i}$ and $\vv_{j}$ are vectors of length $d_{i}$ and $d_{j}$, respectively.   
If $\left(w_{i}\right)_{1\leq i \leq l}\in \CC^{l}$ is an eigenvector of $\bar A$ belonging to the eigenvalue $\lambda$, 
then $\xx=\left(w_{i}\vv_{i}\right)_{1\leq i \leq l}\in \CC^{\delta}$ satisfies $A\xx=\lambda\xx$. 
\end{Lem}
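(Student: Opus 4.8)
The plan is to verify $A\xx=\lambda\xx$ directly, block by block, exploiting the fact that the hypothesis $A^{ij}\vv_{j}=\bar a_{ij}\vv_{i}$ turns the action of each block on the distinguished vectors into plain scalar multiplication. Write $\xx=(w_{i}\vv_{i})_{1\leq i\leq l}$, so that the $i$th block of $\xx$ (a vector of length $d_{i}$) is $w_{i}\vv_{i}$, and write $\ww=(w_{i})_{1\leq i\leq l}$ for the given eigenvector of $\bar A$.

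First I would compute the $i$th block of $A\xx$. By the block multiplication rule and linearity of each $A^{ij}$, the scalar $w_{j}$ pulls out of the matrix--vector product, giving
\begin{align*}
(A\xx)_{i}=\sum_{j=1}^{l}A^{ij}(w_{j}\vv_{j})=\sum_{j=1}^{l}w_{j}\,(A^{ij}\vv_{j}).
\end{align*}
Next I would substitute the hypothesis $A^{ij}\vv_{j}=\bar a_{ij}\vv_{i}$, which yields
\begin{align*}
(A\xx)_{i}=\sum_{j=1}^{l}w_{j}\,\bar a_{ij}\vv_{i}=\left(\sum_{j=1}^{l}\bar a_{ij}w_{j}\right)\vv_{i}.
\end{align*}
The crucial point is that every summand is now a multiple of the \emph{same} vector $\vv_{i}$, so the entire $i$th block collapses to a scalar multiple of $\vv_{i}$, the scalar being exactly the $i$th entry of $\bar A\ww$.

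Finally I would invoke the assumption that $\ww$ is an eigenvector of $\bar A$ with eigenvalue $\lambda$, so that the $i$th entry of $\bar A\ww$ equals $\sum_{j}\bar a_{ij}w_{j}=\lambda w_{i}$. Substituting gives $(A\xx)_{i}=\lambda w_{i}\vv_{i}=\lambda(\xx)_{i}$ for every $i$, and since this holds blockwise for all $1\leq i\leq l$, we conclude $A\xx=\lambda\xx$. There is no genuine obstacle here — the argument is essentially a one-line computation once the block structure is unwound — so the only thing to watch is the bookkeeping of which profile vector appears in each block: the hypothesis is arranged precisely so that $A^{ij}$ sends the $j$-block profile $\vv_{j}$ to a multiple of the $i$-block profile $\vv_{i}$, and this is exactly what reduces the $\delta$-dimensional eigenproblem for $A$ to the $l$-dimensional eigenproblem for $\bar A$.
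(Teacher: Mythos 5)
Your proof is correct and follows essentially the same route as the paper's: compute the $i$th block of $A\xx$, pull out the scalars $w_{j}$, apply the hypothesis $A^{ij}\vv_{j}=\bar a_{ij}\vv_{i}$ to collapse the block to $\bigl(\sum_{j}\bar a_{ij}w_{j}\bigr)\vv_{i}$, and then use the eigenvector equation for $\bar A$. No differences worth noting.
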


\begin{proof}
Since $\left(w_{i}\right)_{1\leq i \leq l}$ is the eigenvector of $\bar A$ belonging to $\lambda$, 
\begin{align*}
\bar A\left(w_{i}\right)_{1\leq i \leq l}=\lambda\left(w_{i}\right)_{1\leq i \leq l}. 
\end{align*}
The $i$th row of the equation implies that 
\begin{align*}
\begin{pmatrix}
\bar a_{i1}&\bar a_{i2}&\cdots&\bar a_{il}
\end{pmatrix}
\begin{pmatrix}
w_{1} \\
w_{2} \\
\vdots \\
w_{l} \\
\end{pmatrix} 
&=\lambda w_{i}
\end{align*}
for all $i$. 
Since $A^{ij}\vv_{j}=\bar a_{ij}\vv_{i}$, 
\begin{align*}
w_{1}A^{i1}\vv_{1}+w_{2}A^{i2}\vv_{2}+\cdots+w_{l}A^{il}\vv_{l}
&=\bar a_{i1}w_{1}\vv_{i}+\bar a_{i2}w_{2}\vv_{i}+\cdots+\bar a_{il}w_{l}\vv_{i} \\
&=(\bar a_{i1}w_{1}+\bar a_{i2}w_{2}+\cdots+\bar a_{il}w_{l})\vv_{i} \\
&=\left(
\begin{pmatrix}
\bar a_{i1}&\bar a_{i2}&\cdots&\bar a_{il}
\end{pmatrix}
\begin{pmatrix}
w_{1} \\
w_{2} \\
\vdots \\
w_{l} \\
\end{pmatrix} 
\right)\vv_{i} \\
&=\lambda w_{i}\vv_{i}
\end{align*}
for all $i$. 
Hence 
\begin{align*}
A\xx
&=
\begin{pmatrix}
A^{11}&A^{12}&\cdots&A^{1l} \\
A^{21}&A^{22}&\cdots&A^{2l} \\
\vdots&\vdots&\ddots&\vdots \\
A^{l1}&A^{l2}&\cdots&A^{ll} \\
\end{pmatrix}
\begin{pmatrix}
w_{1}\vv_{1} \\
w_{2}\vv_{2} \\
\vdots \\
w_{l}\vv_{l} \\
\end{pmatrix} \\
&=
\begin{pmatrix}
w_{1}A^{11}\vv_{1}+w_{2}A^{12}\vv_{2}+\cdots+w_{l}A^{1l}\vv_{l} \\
\vdots \\
w_{1}A^{l1}\vv_{1}+w_{2}A^{l2}\vv_{2}+\cdots+w_{l}A^{ll}\vv_{l}
\end{pmatrix} \\
&=\lambda
\begin{pmatrix}
w_{1}\vv_{1} \\
w_{2}\vv_{2} \\
\vdots \\
w_{l}\vv_{l} \\
\end{pmatrix} \\
&=\lambda\xx. 
\end{align*}
\end{proof}

\begin{Rem}\label{inducezerovector}
The vector $\xx$ may be equal to the zero vector $\zero_{\delta}$ of size $\delta$.  
Hence the vector $\xx$ may not be an eigenvector of $A$. 
\end{Rem}

Let $C_{n}$ be the square matrix 
\begin{align*}
\begin{pmatrix}
0&1&&& \\
&0&1&& \\
&&\ddots&\ddots& \\
&&&0&1 \\
1&&&&0 \\
\end{pmatrix}
\end{align*}
of size $n$. 
Let $\zeta_{n}$ be the $n$th primitive root of unity, and
\begin{align*}
\zz_{n,k}=
\begin{pmatrix}
1 \\
\zeta_{n}^{k} \\
\zeta_{n}^{2k} \\
\vdots \\
\zeta_{n}^{(n-1)k} \\
\end{pmatrix}. 
\end{align*}
Then $\zz_{n,k}$ is an eigenvector of $C_{n}$ belonging to the eigenvalue $\zeta_{n}^{k}$. 
Note that $\zz_{n,k}$ is also an eigenvector of $(C_{n})^{t}$ belonging to the  eigenvalue $\zeta_{n}^{kt}$. 

Let $I_{n}$ be the identity matrix of size $n$, and 
$J_{mn}$ the all-one matrix of size $m \times n$. 
For an $n\times n$ matrix $X$, $\chi_{X}(t)$ denotes the characteristic polynomial $\det(tI_{n}-X)$ of $X$ in the variable $t$, and $X^{(1)}$ denotes the first row of $X$. 
Note that the product $X^{(1)}\zz_{n,k}$ is a scalar.

\begin{Lem}\label{cyclic eigenvector}
Let $A$ be an $n \times n$ cyclic matrix. 
The vector $\zz_{n, k}$ is an eigenvector of $A$ belonging to the eigenvalue $A^{(1)}\zz_{n, k}$. 
Hence 
\begin{align*}
\chi_{A}(t)&=\prod_{k=0}^{n-1}(t-A^{(1)}\zz_{n, k}),  \\
\det A &=\prod_{k=0}^{n-1}A^{(1)}\zz_{n, k}. 
\end{align*}
\end{Lem}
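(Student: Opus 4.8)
The plan is to realize every cyclic (circulant) matrix as a polynomial in the single shift matrix $C_{n}$ and then to transport the spectral information about $C_{n}$ recorded just above the statement. Writing the first row as $A^{(1)}=(a_{0}, a_{1}, \ldots, a_{n-1})$, the circulant structure means that the $(i,j)$ entry of $A$ depends only on $j-i$ modulo $n$, so that
\[
A=\sum_{t=0}^{n-1}a_{t}\,(C_{n})^{t}.
\]
First I would verify this identity by comparing first rows: since the first row of $(C_{n})^{t}$ is the standard basis covector with a $1$ in position $1+t$, the right-hand side has first row $(a_{0}, \ldots, a_{n-1})=A^{(1)}$, and both sides are circulant, hence equal.

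Next I would compute the action on $\zz_{n,k}$. By the remark preceding the lemma, $\zz_{n,k}$ is an eigenvector of $(C_{n})^{t}$ belonging to the eigenvalue $\zeta_{n}^{kt}$, so
\[
A\zz_{n,k}=\sum_{t=0}^{n-1}a_{t}(C_{n})^{t}\zz_{n,k}=\Bigl(\sum_{t=0}^{n-1}a_{t}\zeta_{n}^{kt}\Bigr)\zz_{n,k}.
\]
It remains only to identify the scalar: since the $j$th entry of $\zz_{n,k}$ is $\zeta_{n}^{(j-1)k}$ and $A_{1,j}=a_{j-1}$, the product $A^{(1)}\zz_{n,k}$ equals $\sum_{t=0}^{n-1}a_{t}\zeta_{n}^{kt}$, which is exactly the eigenvalue appearing above. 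This proves the first assertion.

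Finally, to obtain the factorizations of $\chi_{A}(t)$ and $\det A$, I would argue that the $n$ vectors $\zz_{n,0}, \ldots, \zz_{n,n-1}$ form a basis of $\CC^{n}$. These are the columns of the Vandermonde matrix built from the distinct nodes $1, \zeta_{n}, \ldots, \zeta_{n}^{n-1}$, whose determinant is nonzero; hence $A$ is diagonalized by this basis with the $n$ diagonal entries $A^{(1)}\zz_{n,k}$. Consequently $\chi_{A}(t)=\prod_{k=0}^{n-1}(t-A^{(1)}\zz_{n,k})$, and $\det A=\prod_{k=0}^{n-1}A^{(1)}\zz_{n,k}$ as the product of all the eigenvalues.

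I do not expect a serious obstacle here; the only point demanding care is that the eigenvalues $A^{(1)}\zz_{n,k}$ need not be distinct, so one cannot conclude the characteristic-polynomial factorization merely from exhibiting eigenvectors. The diagonalization via the fixed, eigenvalue-independent Vandermonde basis circumvents this: because the change-of-basis matrix is invertible regardless of coincidences among the eigenvalues, the product formula for $\chi_{A}(t)$ holds with the correct multiplicities.
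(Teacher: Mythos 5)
Your proof is correct and follows exactly the route the paper intends but leaves unwritten: the lemma is stated without proof, and the remark immediately preceding it (that $\zz_{n,k}$ is an eigenvector of $(C_{n})^{t}$ with eigenvalue $\zeta_{n}^{kt}$) is precisely the ingredient your expansion $A=\sum_{t}a_{t}(C_{n})^{t}$ exploits. Your closing observation --- that the factorization of $\chi_{A}(t)$ with correct multiplicities requires the Vandermonde basis $\zz_{n,0},\ldots,\zz_{n,n-1}$ to diagonalize $A$, not merely the exhibition of eigenvectors --- is a genuine point of care that the paper glosses over.
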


We obtain the following from Lemma \ref{cyclic eigenvector}

\begin{Lem}\label{dcmdeterminant}
Let $\alpha, \lambda \in \CC$ and $A=\alpha J_{nn}+\lambda I_{n}$. 
The vector $\zz_{n,0}$ 
is an eigenvector of $A$ belonging to the eigenvalue $\lambda+n\alpha$. 
For $1\leq k\leq n-1$,  the vector $\zz_{n,k}$ is an eigenvector of $A$ belonging to the eigenvalue $\lambda$.  
Hence 
\begin{align*}
\chi_{A}(t)&=(t-\lambda)^{n-1}(t-(\lambda+n\alpha)), \\
\det A&=\lambda^{n-1}(\lambda+n\alpha). 
\end{align*}
\end{Lem}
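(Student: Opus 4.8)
The plan is to recognize that $A=\alpha J_{nn}+\lambda I_{n}$ is a cyclic matrix and then read off everything directly from Lemma \ref{cyclic eigenvector}. Indeed, both $J_{nn}$ (the all-one matrix) and $I_{n}$ are cyclic, so their linear combination $A$ is cyclic as well. This is the only structural observation needed; once it is in place, Lemma \ref{cyclic eigenvector} immediately tells us that each $\zz_{n,k}$ is an eigenvector of $A$ with eigenvalue $A^{(1)}\zz_{n,k}$, and it supplies the formulas for $\chi_{A}(t)$ and $\det A$ as products over $k$. So the whole task reduces to computing the scalars $A^{(1)}\zz_{n,k}$.

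First I would write down the first row of $A$. The first row of $\lambda I_{n}$ is $(\lambda, 0, \ldots, 0)$ and the first row of $\alpha J_{nn}$ is $(\alpha, \alpha, \ldots, \alpha)$, so
\begin{align*}
A^{(1)}=\begin{pmatrix}\lambda+\alpha & \alpha & \cdots & \alpha\end{pmatrix}.
\end{align*}
Pairing this against $\zz_{n,k}=(1, \zeta_{n}^{k}, \ldots, \zeta_{n}^{(n-1)k})^{t}$ gives
\begin{align*}
A^{(1)}\zz_{n,k}=(\lambda+\alpha)+\alpha\zeta_{n}^{k}+\cdots+\alpha\zeta_{n}^{(n-1)k}=\lambda+\alpha\sum_{j=0}^{n-1}\zeta_{n}^{jk}.
\end{align*}

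The only point requiring any argument is the evaluation of the sum $\sum_{j=0}^{n-1}\zeta_{n}^{jk}$, but this is the standard roots-of-unity identity. For $k=0$ every term is $1$, so the sum is $n$ and the eigenvalue is $\lambda+n\alpha$, with eigenvector $\zz_{n,0}$. For $1\leq k\leq n-1$ we have $\zeta_{n}^{k}\neq 1$, so the geometric series gives $\sum_{j=0}^{n-1}(\zeta_{n}^{k})^{j}=\frac{(\zeta_{n}^{k})^{n}-1}{\zeta_{n}^{k}-1}=0$, hence the eigenvalue is $\lambda$. Substituting these $n$ values into the product formulas of Lemma \ref{cyclic eigenvector} and collecting the $n-1$ factors equal to $(t-\lambda)$ together with the single factor $(t-(\lambda+n\alpha))$ yields $\chi_{A}(t)=(t-\lambda)^{n-1}(t-(\lambda+n\alpha))$ and, by setting the eigenvalues into the determinant product, $\det A=\lambda^{n-1}(\lambda+n\alpha)$. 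There is no real obstacle here; the proof is essentially a short computation, and the cyclic-matrix observation does all the heavy lifting.
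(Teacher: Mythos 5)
Your proof is correct and follows exactly the route the paper intends: the paper derives this lemma directly from Lemma \ref{cyclic eigenvector} by viewing $A=\alpha J_{nn}+\lambda I_{n}$ as a cyclic matrix, and your computation of $A^{(1)}\zz_{n,k}=\lambda+\alpha\sum_{j=0}^{n-1}\zeta_{n}^{jk}$ together with the roots-of-unity sum is precisely the verification left implicit there.
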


Now we consider the block matrices $C, D$ and $M(A, \lambda, \dd)$. 

First we consider the case where $\dd=(n, \ldots, n)$ and each block is cyclic. 
Let us consider the block matrix $C$ whose blocks are $n\times n$ cyclic matrices. 
Let $C$ be $\left(C^{ij}\right)_{1\leq i,j \leq l}$, $C^{ij}$ an $n\times n$ matrix for each $i, j$, and $c_{ij}^{(k)}$ an eigenvalue of $C^{ij}$ associated with an eigenvector $\zz_{n, k}$. 
For $C$ and $0 \leq k \leq n-1$, 
we define the $l\times l$ matrix $\bar C_{k}$ by 
\begin{align*}
\bar C_{k}=\left(c_{ij}^{(k)}\right)_{1\leq i,j \leq l}. 
\end{align*}

\begin{thm}\label{big to small}
Let $\left(w_{i}\right)_{1\leq i \leq l}\in \CC^{l}$ be an eigenvector of $\bar C_{k}$ belonging to the eigenvalue $\lambda$. 
Then $\left(w_{i}\zz_{n,k}\right)_{1\leq i \leq l}\in \CC^{nl}$ is an eigenvector of $C$ belonging to $\lambda$. 
Hence
\begin{align*}
\chi_{C}(t)&=\prod_{k=0}^{n-1}\chi_{\bar C_{k}}(t), \\
\det C&=\prod_{k=0}^{n-1} \det \bar C_{k}. 
\end{align*}
\end{thm}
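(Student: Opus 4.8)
The plan is to deduce the eigenvector statement directly from the two preceding lemmas, and then to upgrade it to the factorization of $\chi_{C}$ by an explicit change of basis. First I would prove the eigenvector claim. I apply Lemma \ref{block親玉} to $A=C$ with the special choice $\vv_{1}=\vv_{2}=\cdots=\vv_{l}=\zz_{n,k}$. Since each block $C^{ij}$ is cyclic, Lemma \ref{cyclic eigenvector} gives $C^{ij}\zz_{n,k}=c_{ij}^{(k)}\zz_{n,k}$, so the scalar $\bar a_{ij}$ of Lemma \ref{block親玉} equals $c_{ij}^{(k)}$ and the matrix $\bar A$ there is exactly $\bar C_{k}$. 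Thus, if $(w_{i})_{1\leq i\leq l}$ is an eigenvector of $\bar C_{k}$ for $\lambda$, the lemma yields $C\xx=\lambda\xx$ for $\xx=(w_{i}\zz_{n,k})_{1\leq i\leq l}$. The one thing left to check — precisely the content of Remark \ref{inducezerovector} — is that $\xx\neq\zero_{nl}$: since the first entry of $\zz_{n,k}$ is $1$ and $(w_{i})\neq\zero_{l}$ (being an eigenvector), some block $w_{i}\zz_{n,k}$ is nonzero, so $\xx$ is a genuine eigenvector of $C$ belonging to $\lambda$.

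Next I would establish the identities for $\chi_{C}$ and $\det C$ by conjugating $C$ into block-diagonal form. Let $Z$ be the $n\times n$ matrix whose $k$th column is $\zz_{n,k}$, for $k=0,\ldots,n-1$; because its entries are $\zeta_{n}^{jk}$, it is a Vandermonde matrix in the distinct nodes $\zeta_{n}^{0},\ldots,\zeta_{n}^{n-1}$, hence invertible. By Lemma \ref{cyclic eigenvector}, $Z^{-1}C^{ij}Z=\diag(c_{ij}^{(0)},\ldots,c_{ij}^{(n-1)})$ for every $i,j$. Let $\tilde Z$ be the $nl\times nl$ block-diagonal matrix with $l$ copies of $Z$ on the diagonal. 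Then the $(i,j)$ block of $\tilde Z^{-1}C\tilde Z$ is the diagonal matrix $\diag(c_{ij}^{(0)},\ldots,c_{ij}^{(n-1)})$, so every block of $\tilde Z^{-1}C\tilde Z$ is diagonal. A permutation $P$ that groups together, for each $k$, the $k$th coordinate of every block then conjugates $\tilde Z^{-1}C\tilde Z$ into the block-diagonal matrix $\bigoplus_{k=0}^{n-1}\bar C_{k}$, since the surviving entry in the $k$th slot of the $(i,j)$ block is precisely $c_{ij}^{(k)}$. Because $C$ is similar to $\bigoplus_{k=0}^{n-1}\bar C_{k}$, its characteristic polynomial and determinant factor as
\begin{align*}
\chi_{C}(t)=\prod_{k=0}^{n-1}\chi_{\bar C_{k}}(t), \qquad \det C=\prod_{k=0}^{n-1}\det\bar C_{k},
\end{align*}
as claimed.

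The routine parts are the block multiplication already done in Lemma \ref{block親玉} and the Vandermonde invertibility of $Z$. The main obstacle is the ``Hence'': the eigenvector statement alone only matches individual eigenvalues and, when some $\bar C_{k}$ fails to be diagonalizable, does not by itself account for multiplicities. This is why I would pass through the explicit similarity $\tilde Z$ together with the regrouping permutation $P$, which exhibits $C$ as similar to $\bigoplus_{k}\bar C_{k}$ on the nose and so delivers the characteristic-polynomial and determinant identities with the correct multiplicities, rather than attempting to assemble a full eigenbasis of $C$ (which would require every $\bar C_{k}$ to be diagonalizable and would only recover the eigenvalue multiset).
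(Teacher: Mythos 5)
Your proof is correct, and its first half (the eigenvector claim) is essentially the paper's own argument: the paper likewise invokes Lemma \ref{cyclic eigenvector} to see that the blocks $C^{ij}$ satisfy the hypothesis of Lemma \ref{block親玉} with $\vv_{i}=\zz_{n,k}$, and notes that $\zz_{n,k}\neq\zero$ so the resulting vector is a genuine eigenvector. Where you diverge is the ``Hence'': the paper stops there and treats the factorizations of $\chi_{C}$ and $\det C$ as an immediate consequence, which — as you rightly point out — only accounts for multiplicities if one can assemble a full eigenbasis, i.e.\ if every $\bar C_{k}$ is diagonalizable (one could also argue by counting that the $nl$ eigenvalues so produced, with multiplicity, exhaust the spectrum, but the paper does not say this either). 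Your explicit similarity $P^{-1}\tilde Z^{-1}C\tilde Z P=\bigoplus_{k=0}^{n-1}\bar C_{k}$, with $\tilde Z$ the block-diagonal Vandermonde/DFT matrix and $P$ the regrouping permutation, closes this gap cleanly: it delivers the characteristic-polynomial identity on the nose, with no diagonalizability assumption, and as a bonus it makes Theorem \ref{Dcharacteristic} and Theorem \ref{blockdeterminant} amenable to the same technique. The trade-off is only length; the paper's route is shorter but, strictly speaking, proves less than it claims in the displayed identities.
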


\begin{proof}
Since $\zz_{n,k}$ is a nonzero vector for any $k$, 
the vector $\left(w_{i}\zz_{n,k}\right)_{1\leq i \leq l}$ is also a nonzero vector. 
Since $C^{ij}$ is cyclic, 
it follows from Lemma \ref{cyclic eigenvector} that $\bar C_{k}$ satisfies the assumption in Lemma \ref{block親玉}. 
\end{proof}


Next we consider the case where $\dd=(2n, 2n, \ldots, 2n, n)$. 
Let $D$ be the block matrix $D=\left(D^{ij}\right)_{1 \leq i,j \leq l}$. 
We assume that $D^{ij}$ is a $2n\times 2n$ cyclic matrix if $1 \leq i, j \leq l-1$ 
and that $D^{ll}$ is an $n\times n$ cyclic matrix. 
Moreover we assume that 
\begin{align}\label{eq}
D^{il}=
\begin{pmatrix}
X_{i} \\
X_{i}
\end{pmatrix},  &&
D^{lj}=
\begin{pmatrix}
Y_{j}&Y_{j}
\end{pmatrix} 
\end{align}
for $1\leq i, j \leq l-1$, 
where $X_{i}$ and $Y_{j}$ are $n \times n$ cyclic matrices. 

For $0\leq k \leq 2n-1$, 
we define the $l\times l$ matrix $\bar D_{k}=(d_{ij}^{(k)})_{1\leq i,j \leq l}$ as follows: 
If $k$ is even, then we define 
\begin{align*}
d_{ij}^{(k)}=
\begin{cases}
(D^{ij})^{(1)}\zz_{2n,k} & \text{if  $1\leq j \leq l-1$},  \\
(D^{ij})^{(1)}\zz_{n,\frac{k}{2}} & \text{if $j=l$}. 
\end{cases}
\end{align*}
If $k$ is odd, 
then
\begin{align*}
d_{ij}^{(k)}=
\begin{cases}
(D^{ij})^{(1)}\zz_{2n,k} & \text{if  $1\leq j \leq l-1$},  \\
0 & \text{if $j=l$}. 
\end{cases}
\end{align*}

\begin{Lem}\label{big to small2}
Let $0 \leq k \leq 2n-1$ and $k$ be even. 
Let $\left(w_{i}\right)_{1\leq i \leq l}\in \CC^{l}$ be an eigenvector of $\bar D_{k}$ belonging to the eigenvalue $\lambda$. 
Then 
\begin{align*}
\begin{pmatrix}
w_{1}\zz_{2n, k} \\
\vdots \\
w_{l-1}\zz_{2n, k} \\
w_{l}\zz_{n, \frac{k}{2}}
\end{pmatrix}
\in \CC^{2(l-1)n}
\end{align*}
is an eigenvector of $D$ belonging to $\lambda$. 
\end{Lem}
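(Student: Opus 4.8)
The plan is to reduce the claim to Lemma \ref{block親玉} by producing the right family of vectors. Put $\vv_i=\zz_{2n,k}$ for $1\le i\le l-1$ and $\vv_l=\zz_{n,\frac{k}{2}}$, so that the vector in the statement is exactly $\xx=(w_i\vv_i)_{1\le i\le l}$. If I can verify
\begin{align*}
D^{ij}\vv_j=d_{ij}^{(k)}\vv_i \qquad (1\le i,j\le l),
\end{align*}
then the hypothesis of Lemma \ref{block親玉} holds with $\bar A=\bar D_k$ and $\bar a_{ij}=d_{ij}^{(k)}$, and applying that lemma to the eigenvector $(w_i)$ of $\bar D_k$ yields $D\xx=\lambda\xx$ at once.

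The identity that makes the off-diagonal blocks cooperate is
\begin{align*}
\zz_{2n,k}=\begin{pmatrix}\zz_{n,\frac{k}{2}}\\ \zz_{n,\frac{k}{2}}\end{pmatrix},
\end{align*}
valid for even $k$, which I would establish first. Writing $k=2m$ and using $\zeta_{2n}^{2}=\zeta_n$ together with $\zeta_n^{n}=1$, the $j$th entry of $\zz_{2n,k}$ is $\zeta_{2n}^{jk}=\zeta_n^{jm}$, which is periodic in $j$ with period $n$; this is precisely the folding displayed above.

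With this in hand I would check the four cases dictated by the block shape \eqref{eq}. When $1\le i,j\le l-1$ and when $i=j=l$, the blocks are cyclic of the appropriate size and $d_{ij}^{(k)}=(D^{ij})^{(1)}\vv_j$, so the required equation $D^{ij}\vv_j=d_{ij}^{(k)}\vv_i$ is just Lemma \ref{cyclic eigenvector}. For $1\le i\le l-1$ and $j=l$, the shape $D^{il}=\bigl(\begin{smallmatrix}X_i\\ X_i\end{smallmatrix}\bigr)$ and the folding identity give $D^{il}\zz_{n,\frac{k}{2}}=(X_i^{(1)}\zz_{n,\frac{k}{2}})\,\zz_{2n,k}$, while $(D^{il})^{(1)}=X_i^{(1)}$ shows the scalar equals $d_{il}^{(k)}$, as wanted. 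For $i=l$ and $1\le j\le l-1$, the two halves of $D^{lj}=(Y_j\ \ Y_j)$ add up, so that $D^{lj}\zz_{2n,k}=2\,(Y_j^{(1)}\zz_{n,\frac{k}{2}})\,\zz_{n,\frac{k}{2}}$, which matches $d_{lj}^{(k)}\vv_l$ because $(D^{lj})^{(1)}\zz_{2n,k}=2\,Y_j^{(1)}\zz_{n,\frac{k}{2}}$.

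Finally I would invoke Lemma \ref{block親玉} and rule out the degenerate possibility of Remark \ref{inducezerovector}: since $(w_i)$ is an eigenvector it has a nonzero coordinate $w_{i_0}$, and $\vv_{i_0}\neq\zero$, so $\xx\neq\zero$ and is a genuine eigenvector of $D$ belonging to $\lambda$. The one place the hypothesis really bites is the evenness of $k$, which is what lets $\zz_{n,\frac{k}{2}}$ be defined and makes the folding identity hold; this is the main (if mild) obstacle, since for odd $k$ the final column degenerates and must be treated separately, matching the definition $d_{il}^{(k)}=0$ in that case.
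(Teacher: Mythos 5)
Your proof is correct and follows essentially the same route as the paper: the folding identity $\zz_{2n,k}=\bigl(\begin{smallmatrix}\zz_{n,k/2}\\ \zz_{n,k/2}\end{smallmatrix}\bigr)$ for even $k$, the block shape \eqref{eq} together with Lemma \ref{cyclic eigenvector} to verify the hypothesis $D^{ij}\vv_j=d_{ij}^{(k)}\vv_i$, and then Lemma \ref{block親玉}. The only difference is that you spell out the four block cases and the nonvanishing of $\xx$ explicitly, which the paper leaves implicit.
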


\begin{proof}
Now the matrix $D$ satisfies the equation \eqref{eq}, and we have
\begin{align*}
\zz_{2n, k}=
\begin{pmatrix}
\zz_{n,\frac{k}{2}} \\
\zz_{n,\frac{k}{2}}
\end{pmatrix}. 
\end{align*}
Hence Lemmas \ref{block親玉} and \ref{cyclic eigenvector} imply Lemma \ref{big to small2}. 
\end{proof}

\begin{Lem}\label{big to small3}
Let $0 \leq k \leq 2n-1$ and $k$ be odd. 
Let $\ww=\left(w_{i}\right)_{1 \leq i \leq l}$ be an eigenvector of $\bar D_{k}$ belonging to the eigenvector $\lambda$. 
If the vector $\ww$ is linearly independent of the vector 
\begin{align*}
\begin{pmatrix}
\zero_{l-1} \\
1
\end{pmatrix}, 
\end{align*}
then 
\begin{align*}
\begin{pmatrix}
w_{1}\zz_{2n, k} \\
\vdots \\
w_{l-1}\zz_{2n, k} \\
\zero_{n}
\end{pmatrix}
\in \CC^{2(l-1)n}
\end{align*}
is an eigenvector of $D$ belonging to $\lambda$. 
\end{Lem}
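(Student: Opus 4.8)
The plan is to reduce the odd-$k$ case to the same block-eigenvector machinery used in Lemma \ref{big to small2}, but now accounting for the fact that $\zz_{2n,k}$ behaves antisymmetrically across the two halves when $k$ is odd. First I would record the key decomposition: for odd $k$ one has
\begin{align*}
\zz_{2n,k}=
\begin{pmatrix}
\zz_{n,\frac{k-1}{2}}\,\text{(type factor)} \\[2pt]
-\,\zz_{n,\frac{k-1}{2}}\,\text{(type factor)}
\end{pmatrix},
\end{align*}
so that the two stacked copies differ by a sign; concretely, writing $\zeta_{2n}^{k}$ with $k$ odd, the lower $n$ entries equal $\zeta_{2n}^{kn}=(-1)^{k}=-1$ times the upper $n$ entries. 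This is the structural reason the last block must receive the zero vector $\zero_{n}$ rather than a scalar multiple of $\zz_{n,k/2}$, since $k/2$ is not an integer here.

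The main computation is to verify $D\xx=\lambda\xx$ directly for $\xx=(w_{1}\zz_{2n,k},\ldots,w_{l-1}\zz_{2n,k},\zero_{n})$. For the first $l-1$ block-rows I would apply Lemma \ref{cyclic eigenvector} to each cyclic $D^{ij}$ ($1\le i,j\le l-1$) exactly as before; the only new point is the contribution of the last column block $D^{il}=\begin{pmatrix}X_{i}\\X_{i}\end{pmatrix}$ acting on $\zero_{n}$, which vanishes, so these rows reproduce the eigenvalue equation for $\bar D_{k}$ with the convention $d_{il}^{(k)}=0$ built into the definition of $\bar D_{k}$ for odd $k$. The delicate block-row is the last one: here $D^{lj}=\begin{pmatrix}Y_{j}&Y_{j}\end{pmatrix}$ acts on $w_{j}\zz_{2n,k}$, producing $w_{j}Y_{j}(\zz_{n,\cdot}+(-\zz_{n,\cdot}))=\zero_{n}$ by the sign cancellation above, and $D^{ll}\zero_{n}=\zero_{n}$; thus the last block-row of $D\xx$ is $\zero_{n}$, which must match $\lambda\cdot\zero_{n}=\zero_{n}$ automatically. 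This is exactly why the value of $w_{l}$ plays no role in the last block, and correspondingly why the definition sets $d_{lj}^{(k)}$ from the $\zz_{2n,k}$-eigenvalue while $d_{il}^{(k)}=0$.

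The one genuine obstacle—and the reason for the hypothesis—is ensuring $\xx\neq\zero$, since Lemma \ref{block親玉} (cf.\ Remark \ref{inducezerovector}) only guarantees $A\xx=\lambda\xx$ and not that $\xx$ is a true eigenvector. Here $\xx=\zero$ precisely when $w_{1}=\cdots=w_{l-1}=0$, i.e.\ when $\ww$ is a scalar multiple of $\begin{pmatrix}\zero_{l-1}\\1\end{pmatrix}$. The linear-independence hypothesis rules this out, so some $w_{i}$ with $i\le l-1$ is nonzero and, since $\zz_{2n,k}\ne\zero$, the vector $\xx$ is nonzero. I would close by noting that in this odd-$k$ regime the last coordinate $w_{l}$ of the abstract eigenvector is essentially free and does not feed into $\xx$, which is the conceptual content of the independence assumption.
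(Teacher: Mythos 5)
Your proof is correct and takes essentially the same route as the paper, whose own proof is just a one-line appeal to Lemmas \ref{block親玉} and \ref{cyclic eigenvector}; you supply exactly the details that appeal hides, namely that for odd $k$ the lower half of $\zz_{2n,k}$ is $(-1)^{k}=-1$ times the upper half, so the blocks $\bigl(Y_{j}\ Y_{j}\bigr)$ annihilate $\zz_{2n,k}$ and the last block-row closes trivially, while the independence hypothesis is precisely what keeps $\xx$ nonzero. (Your displayed identification of the top half with $\zz_{n,\frac{k-1}{2}}$ is not literally right --- its entries are powers of $\zeta_{2n}^{k}$, not of $\zeta_{n}^{(k-1)/2}$ --- but your argument only uses the sign relation between the two halves, which you state correctly, so nothing is affected.)
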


\begin{proof}
It can be also shown by Lemmas \ref{block親玉} and \ref{cyclic eigenvector}.  
\end{proof}

\begin{Rem}\label{remeigenvector}
The vector 
\begin{align*}
\left(
\begin{array}{c}
\zero_{l-1}     \\
1       
\end{array}
\right)
\end{align*}
is an eigenvector of $\bar D_{k}$ belonging to the eigenvalue zero. 
The eigenvector, however,  does not induce an eigenvector of $D$. 
See also Remark \ref{inducezerovector}. 
\end{Rem}

Theorem \ref{Dcharacteristic} follows from Lemmas \ref{big to small2} and \ref{big to small3} 

\begin{thm}\label{Dcharacteristic}
The characteristic polynomial of $D$ is  
\begin{align*}
\chi_{D}(t)&=\left(\prod_{k: \text{even}}\chi_{\bar D_{k}}(t)\right)\left(\prod_{k: \text{odd}}\frac{1}{t}\chi_{\bar D_{k}}(t)\right) \\
&=\frac{1}{t^{n}}\prod_{k=0}^{2n-1}\chi_{\bar D_{k}}(t). 
\end{align*}
\end{thm}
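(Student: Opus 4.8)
The plan is to split $\CC^{(2l-1)n}$ into $D$-invariant subspaces indexed by the frequency $k$, represent $D$ on each piece by a small matrix, and multiply the resulting characteristic polynomials. Lemmas \ref{big to small2} and \ref{big to small3} supply exactly the eigen-data needed for this splitting, and phrasing the argument through invariant subspaces rather than individual eigenvectors has the advantage that characteristic polynomials of invariant direct summands multiply even without any semisimplicity assumption. Write $e_{i}\otimes\vv$ for the vector of $\CC^{(2l-1)n}$ supported on the $i$th block and equal to $\vv$ there. Recall that $\{\zz_{2n,k}\}_{k=0}^{2n-1}$ is a basis for $\CC^{2n}$, that $\{\zz_{n,m}\}_{m=0}^{n-1}$ is a basis for $\CC^{n}$, and that
\begin{align*}
\zz_{2n,2m}=\begin{pmatrix}\zz_{n,m}\\ \zz_{n,m}\end{pmatrix},\qquad \zz_{2n,k}=\begin{pmatrix}\uu\\ -\uu\end{pmatrix}\ (k\text{ odd}),
\end{align*}
where $\uu=(1,\zeta_{2n}^{k},\ldots,\zeta_{2n}^{(n-1)k})^{t}$; the second identity uses $\zeta_{2n}^{n}=-1$. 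For even $k=2m$ put $V_{k}=\operatorname{span}\{e_{1}\otimes\zz_{2n,k},\ldots,e_{l-1}\otimes\zz_{2n,k},\,e_{l}\otimes\zz_{n,m}\}$, an $l$-dimensional space; for odd $k$ put $V_{k}=\operatorname{span}\{e_{1}\otimes\zz_{2n,k},\ldots,e_{l-1}\otimes\zz_{2n,k}\}$, an $(l-1)$-dimensional space.

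First I would check that each $V_{k}$ is $D$-invariant. On the cyclic blocks, Lemma \ref{cyclic eigenvector} gives $D^{ij}\zz_{2n,k}=d_{ij}^{(k)}\zz_{2n,k}$ for $i,j\le l-1$ and $D^{ll}\zz_{n,m}=d_{ll}^{(k)}\zz_{n,m}$; using \eqref{eq} and the two displayed identities, the special off-diagonal blocks act correctly. For even $k$ one has $D^{il}\zz_{n,m}=x_{i}\zz_{2n,k}$ with $x_{i}$ the eigenvalue of $X_{i}$, and $D^{lj}\zz_{2n,k}=2Y_{j}\zz_{n,m}$, a multiple of $\zz_{n,m}$; for odd $k$ one has $D^{lj}\zz_{2n,k}=Y_{j}\uu-Y_{j}\uu=\zero_{n}$, so the image stays in $V_{k}$. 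This is precisely the computation behind Lemmas \ref{big to small2} and \ref{big to small3}. In the chosen bases, $D|_{V_{k}}$ is represented for even $k$ by the full matrix $\bar D_{k}$, and for odd $k$ by the top-left $(l-1)\times(l-1)$ block of $\bar D_{k}$.

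Next I would verify $\bigoplus_{k=0}^{2n-1}V_{k}=\CC^{(2l-1)n}$. The $n$ even frequencies contribute $nl$ basis vectors and the $n$ odd frequencies contribute $n(l-1)$, totalling $(2l-1)n$, the size of $D$; linear independence is immediate because, block by block, these are just the basis vectors $e_{i}\otimes\zz_{2n,k}$ ($i\le l-1$) and $e_{l}\otimes\zz_{n,m}$. Since the summands are $D$-invariant,
\begin{align*}
\chi_{D}(t)=\prod_{k\,\text{even}}\chi_{D|_{V_{k}}}(t)\ \prod_{k\,\text{odd}}\chi_{D|_{V_{k}}}(t).
\end{align*}
For even $k$ this factor is $\chi_{\bar D_{k}}(t)$. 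For odd $k$ the last column of $\bar D_{k}$ is zero (as $d_{il}^{(k)}=0$), so cofactor expansion gives $\chi_{\bar D_{k}}(t)=t\cdot\det\!\big(tI_{l-1}-(d_{ij}^{(k)})_{i,j\le l-1}\big)$, and the determinant on the right is exactly $\chi_{D|_{V_{k}}}(t)$; hence the odd factor equals $\tfrac1t\chi_{\bar D_{k}}(t)$. Multiplying, and using that there are $n$ odd values of $k$, yields the claimed formula.

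The main obstacle is the multiplicity bookkeeping flagged in Remark \ref{remeigenvector}: for odd $k$ the vector $(\zero_{l-1},1)^{t}$ is an eigenvector of $\bar D_{k}$ with eigenvalue $0$ that does not lift to an eigenvector of $D$, so the naive product $\prod_{k}\chi_{\bar D_{k}}(t)$ would over-count by one zero eigenvalue per odd frequency — exactly the $n$ spurious factors of $t$ that the $\tfrac1{t^{n}}$ removes. Framing the argument through the invariant subspaces $V_{k}$ rather than through individual eigenvectors is what makes this precise: for odd $k$ only the $(l-1)$-dimensional $V_{k}$ is used, its characteristic polynomial is $\chi_{\bar D_{k}}(t)/t$ by the cofactor expansion above, and because characteristic polynomials of invariant direct summands multiply, no diagonalizability hypothesis on $D$ or on the $\bar D_{k}$ is needed.
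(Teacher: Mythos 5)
Your proof is correct, and it is more complete than what the paper actually supplies: the paper's entire argument for Theorem \ref{Dcharacteristic} is the one-line assertion that it ``follows from Lemmas \ref{big to small2} and \ref{big to small3}.'' Those lemmas only lift individual eigenvectors of $\bar D_{k}$ to eigenvectors of $D$, so to deduce the full characteristic polynomial (with multiplicities) from them one implicitly needs each $\bar D_{k}$ to be diagonalizable, or at least an argument that the lifted vectors span $\CC^{(2l-1)n}$ --- a point the paper leaves unaddressed. You work with the same frequency-$k$ vectors, but you package them into the $D$-invariant subspaces $V_{k}$, verify directly that they give a direct-sum decomposition of $\CC^{(2l-1)n}$ by dimension count and blockwise independence of the DFT bases, identify $D|_{V_{k}}$ with $\bar D_{k}$ for even $k$ and with the top-left $(l-1)\times(l-1)$ block of $\bar D_{k}$ for odd $k$, and then multiply characteristic polynomials of invariant summands. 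The cofactor expansion along the zero last column of $\bar D_{k}$ for odd $k$ cleanly accounts for the $n$ spurious factors of $t$ that Remark \ref{remeigenvector} warns about. The payoff of your phrasing is that no semisimplicity hypothesis on $D$ or on the $\bar D_{k}$ is needed, so the theorem is established at the level of generality at which it is stated; the paper's route only becomes airtight once one observes (as its later lemmas do, for the specific $\bar D_{k}$ arising from $K_{n}$) that each $\bar D_{k}$ has a full set of eigenvectors. One minor point in your favor: the ambient dimension is indeed $(2l-1)n=2(l-1)n+n$ as you write, not the $2(l-1)n$ appearing in the statements of Lemmas \ref{big to small2} and \ref{big to small3}.
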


Finally we consider the block matrix $M(A, \lambda, \dd)$. 
For a square matrix $A$ of size $l$, $\dd = (d_{1}, d_{2}, \ldots, d_{l})$, and 
$\lambda = (\lambda_{1}, \lambda_{2}, \ldots, \lambda_{l})$, 
we define 
\begin{align*}
T(A, \dd)&=(a_{ij}J_{d_{i}d_{j}})_{1\leq i,j \leq l}, \\
D(\lambda, \dd)&=
\begin{pmatrix}
\lambda_{1} I_{d_{1}} &                            &           & \zero                   \\
                           & \lambda_{2} I_{d_{2}} &           &                           \\
                           &                            & \ddots &                           \\
\zero                    &                            &           & \lambda_{l} I_{d_{l}} 
\end{pmatrix}. 
\end{align*}
We define the square matrix $M(A, \lambda, \dd)$ of size $d_{1}+\cdots+d_{l}$ by 
\begin{align*}
M(A, \lambda, \dd)=T(A, \dd)+D(\lambda, \dd) . 
\end{align*}
We define also the square matrix $\bar M(A, \lambda, \dd)$ of size $l$ by
\begin{align*}
\bar M(A, \lambda, \dd)=\diag(d_{1}, \ldots, d_{l})A+\diag(\lambda_{1}, \ldots, \lambda_{l}), 
\end{align*}
where $\diag(x_{1}, \ldots, x_{l})$ is the diagonal matrix with entries $x_{1}, \ldots, x_{l}$.

\begin{Lem}\label{Meigenvalues}
Let $\lambda=(\lambda_{1}, \ldots, \lambda_{l})$, $\dd=(d_{1}, d_{2}, \ldots, d_{l})$, and $\delta=d_{1}+d_{2}+\cdots+d_{l}$. 

\begin{enumerate}
\item Let $i \in \Set{1, 2, \ldots, l}$. 
Let $1\leq k \leq d_{i}-1$. 
We suppose that $w_{i}=1$ and that for $j \in \Set{1, 2, \ldots, l}\setminus \Set{i}, w_{j}=0$. 
The vector $\left(w_{j}\zz_{d_{j}, k}\right)_{1\leq j \leq l}\in \CC^{\delta}$ is an eigenvector of $M(A, \lambda, \dd)$ belonging to the eigenvalue $\lambda_{i}$. 
\item If $\left(w_{i}\right)_{1\leq i \leq l}\in \CC^{l}$ is an eigenvector of $\bar M(A, \lambda, \dd)$ belonging to the eigenvalue $\mu$, 
then $\left(w_{i}\zz_{d_{j}, 0}\right)_{1\leq i \leq l}\in \CC^{\delta}$ is an eigenvector of $M(A, \lambda, \dd)$ belonging to the eigenvalue $\mu$. 
\end{enumerate}
\end{Lem}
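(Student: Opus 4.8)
The plan is to verify both statements by reducing them to the already-proven Lemma~\ref{block親玉} together with the explicit eigenvector computation of Lemma~\ref{dcmdeterminant}. The key observation is that $M(A,\lambda,\dd)$ is exactly a block matrix of the form treated in Lemma~\ref{block親玉}, where the $(i,j)$ block is
\begin{align*}
M^{ij}=\begin{cases} a_{ij}J_{d_{i}d_{j}}+\lambda_{i}I_{d_{i}} & \text{if } i=j, \\ a_{ij}J_{d_{i}d_{j}} & \text{if } i\neq j. \end{cases}
\end{align*}
So the whole argument amounts to identifying, for each prospective eigenvector, the relevant scalars $\bar a_{ij}$ such that $M^{ij}\vv_{j}=\bar a_{ij}\vv_{i}$, and then invoking Lemma~\ref{block親玉}. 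The main point to check is how the off-diagonal all-one blocks $a_{ij}J_{d_{i}d_{j}}$ act on the vectors $\zz_{d_{j},k}$, and this is where the two cases split.

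For part~(1), I would first record the action of the all-one block on a root-of-unity vector. The first row of $J_{d_{i}d_{j}}$ is the all-one row, so $J_{d_{i}d_{j}}\zz_{d_{j},k}$ has every entry equal to $\sum_{m=0}^{d_{j}-1}\zeta_{d_{j}}^{mk}$, which is $d_{j}$ when $k\equiv 0$ and $0$ otherwise. Since in part~(1) we take $1\leq k\leq d_{i}-1$ and set $w_{j}=0$ for $j\neq i$, the only surviving block is the diagonal block $M^{ii}=a_{ii}J_{d_{i}d_{i}}+\lambda_{i}I_{d_{i}}$ acting on $\zz_{d_{i},k}$. By Lemma~\ref{dcmdeterminant} applied with $\alpha=a_{ii}$, $\lambda=\lambda_{i}$, $n=d_{i}$, the vector $\zz_{d_{i},k}$ is an eigenvector of this diagonal block with eigenvalue $\lambda_{i}$ precisely because $k\neq 0$. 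Then I would write out $M(A,\lambda,\dd)$ applied to $(w_{j}\zz_{d_{j},k})_{1\leq j\leq l}$ directly: every block row collapses to its single nonzero contribution, the $i$th block row gives $\lambda_{i}\zz_{d_{i},k}$, and every other block row gives $a_{ji}J_{d_{j}d_{i}}\zz_{d_{i},k}=\zero$ since $k\neq 0$, which is exactly $\lambda_{i}$ times the (zero) entry of the candidate eigenvector. This confirms the claim, and one must also note the candidate is nonzero because $\zz_{d_{i},k}\neq\zero$.

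For part~(2), the relevant vectors are the $\zz_{d_{j},0}$, which are all-one vectors. Here $J_{d_{i}d_{j}}\zz_{d_{j},0}=d_{j}\zz_{d_{i},0}$, so $M^{ij}\vv_{j}=\bar a_{ij}\vv_{i}$ holds with $\vv_{i}=\zz_{d_{i},0}$ and $\bar a_{ij}=d_{j}a_{ij}+\delta_{ij}\lambda_{i}$, where $\delta_{ij}$ is the Kronecker delta. The matrix $(\bar a_{ij})_{1\leq i,j\leq l}$ is then $A\diag(d_{1},\ldots,d_{l})+\diag(\lambda_{1},\ldots,\lambda_{l})$; I should double-check against the stated $\bar M(A,\lambda,\dd)=\diag(d_{1},\ldots,d_{l})A+\diag(\lambda_{1},\ldots,\lambda_{l})$, and since these two matrices are transposes of conjugate form, the cleanest route is to verify the index placement carefully so that the hypothesis of Lemma~\ref{block親玉} matches the defining relation $\bar A=(\bar a_{ij})$ with $A^{ij}\vv_{j}=\bar a_{ij}\vv_{i}$. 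Once the reduced matrix is correctly identified as $\bar M(A,\lambda,\dd)$, Lemma~\ref{block親玉} immediately produces the eigenvector $(w_{i}\zz_{d_{j},0})_{1\leq i\leq l}$ for eigenvalue $\mu$.

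The main obstacle I anticipate is purely bookkeeping rather than conceptual: getting the order of multiplication right in $\bar M(A,\lambda,\dd)$, since the $d_{j}$ arising from $J_{d_{i}d_{j}}\zz_{d_{j},0}=d_{j}\zz_{d_{i},0}$ attaches to the column index $j$ whereas the statement writes $\diag(d_{1},\ldots,d_{l})A$ with the diagonal on the left. I would resolve this by checking the scalar relation entry-by-entry for a single block row rather than trusting the symbolic form, and if there is a genuine transpose discrepancy it would only reflect a harmless convention choice (left versus right scaling), since both matrices have the same characteristic polynomial. Both parts then follow with no hard estimate and no new machinery beyond Lemmas~\ref{block親玉} and~\ref{dcmdeterminant}.
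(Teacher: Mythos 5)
Your proposal is correct and follows essentially the same route as the paper's own proof: both reduce to Lemma~\ref{block親玉} by computing how the all-one blocks $a_{ij}J_{d_id_j}$ act on $\zz_{d_j,k}$ (giving $\zero$ for $k\neq 0$ and $d_ja_{ij}\zz_{d_i,0}$ for $k=0$) and invoking Lemma~\ref{dcmdeterminant} for the diagonal blocks. The left/right placement issue you flag is real --- the reduced matrix one actually obtains is $A\diag(d_1,\ldots,d_l)+\diag(\lambda_1,\ldots,\lambda_l)$ rather than the stated $\diag(d_1,\ldots,d_l)A+\diag(\lambda_1,\ldots,\lambda_l)$ --- but, as you note, the two are conjugate by $\diag(d_1,\ldots,d_l)$, so Theorem~\ref{blockdeterminant} and all subsequent determinant and eigenvalue computations are unaffected.
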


\begin{proof}
Let $A=(a_{ij})$. 
Since $1\leq k \leq d_{i}-1$, it holds that $\zeta_{d_{i}}^{k} \neq 1$. 
Hence
\begin{align*}
a_{ij}J_{d_{i}d_{j}}\zz_{d_{j}, 0}=\zero_{d_{i}}. 
\end{align*}
Therefore the claim $(1)$ follows from Lemmas \ref{block親玉} and \ref{dcmdeterminant}. 
It follows that 
\begin{align*}
a_{ij}J_{d_{i}d_{j}}\zz_{d_{j},0}=d_{j}a_{ij}\zz_{d_{i}, 0}
\end{align*}
for $i\neq j$, and that 
\begin{align*}
\left(a_{ii}J_{d_{i}d_{i}}+\lambda_{i} I_{d_{i}}\right)\zz_{d_{i}, 0}=(d_{i}a_{ii}+\lambda_{i})\zz_{d_{i}, 0}
\end{align*}
for all $i$. 
Then the claim $(2)$ follows from Lemma \ref{block親玉}. 
\end{proof}

Theorem \ref{blockdeterminant} follows from Lemma \ref{Meigenvalues}.  

\begin{thm}\label{blockdeterminant}
For a matrix $A$ of size $l$, $\lambda=(\lambda_{1}, \ldots, \lambda_{l})$ and $\dd=(d_{1}, d_{2}, \ldots, d_{l})$, we have
\begin{align*}
\chi_{M(A, \lambda, \dd)}(t)&=\chi_{\bar M(A, \lambda, \dd)}(t)\prod_{i=1}^{l}(t-\lambda_{i})^{d_{i}-1}, \\
\det M(A, \lambda, \dd) &= \det \bar M(A, \lambda, \dd)\prod_{i=1}^{l} \lambda_{i}^{d_{i}-1} . 
\end{align*}
\end{thm}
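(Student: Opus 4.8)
The plan is to reduce $M(A,\lambda,\dd)$ to block form by passing, inside each block, to the basis of discrete‑Fourier vectors, and then to read off the characteristic polynomial from two invariant subspaces. For each $j$ the vectors $\zz_{d_{j},0},\zz_{d_{j},1},\ldots,\zz_{d_{j},d_{j}-1}$ form a basis of $\CC^{d_{j}}$ (a Vandermonde matrix in the powers of $\zeta_{d_{j}}$); writing $e_{j,k}\in\CC^{\delta}$ for the vector equal to $\zz_{d_{j},k}$ in the $j$th block and $\zero$ elsewhere, the family $\{e_{j,k}:1\leq j\leq l,\ 0\leq k\leq d_{j}-1\}$ is therefore a basis of $\CC^{\delta}$. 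I would split it into $V_{1}=\operatorname{span}\{e_{j,k}:k\neq 0\}$ and $V_{0}=\operatorname{span}\{e_{j,0}:1\leq j\leq l\}$, of dimensions $\delta-l$ and $l$.

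First I would handle $V_{1}$. By Lemma \ref{Meigenvalues}(1), each $e_{j,k}$ with $1\leq k\leq d_{j}-1$ is an eigenvector of $M(A,\lambda,\dd)$ belonging to $\lambda_{j}$. Hence $V_{1}$ is invariant, $M$ acts diagonally on it, and $\lambda_{j}$ occurs with multiplicity $d_{j}-1$, so that the characteristic polynomial of the restriction is $\prod_{j=1}^{l}(t-\lambda_{j})^{d_{j}-1}$. Next I would treat $V_{0}$. Since $J_{d_{i}d_{j}}\zz_{d_{j},0}=d_{j}\zz_{d_{i},0}$, the same computation used in the proof of Lemma \ref{Meigenvalues} shows $M e_{j,0}=\sum_{i}a_{ij}d_{j}\,e_{i,0}+\lambda_{j}e_{j,0}\in V_{0}$, so $V_{0}$ is invariant and the matrix of $M|_{V_{0}}$ in the basis $e_{1,0},\ldots,e_{l,0}$ is $A\,\diag(d_{1},\ldots,d_{l})+\diag(\lambda_{1},\ldots,\lambda_{l})$.

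The key observation is then that this matrix is conjugate to $\bar M(A,\lambda,\dd)=\diag(d_{1},\ldots,d_{l})A+\diag(\lambda_{1},\ldots,\lambda_{l})$ by $P=\diag(d_{1},\ldots,d_{l})$: one has $P\bigl(A\,\diag(d)\bigr)P^{-1}=\diag(d)\,A$, while the diagonal term is unchanged because $\diag(\lambda)$ commutes with $P$. Consequently $\chi_{M|_{V_{0}}}(t)=\chi_{\bar M(A,\lambda,\dd)}(t)$, which is exactly the content of Lemma \ref{Meigenvalues}(2). Since $\CC^{\delta}=V_{0}\oplus V_{1}$ with both summands $M$-invariant, the characteristic polynomial factors,
\[
\chi_{M(A,\lambda,\dd)}(t)=\chi_{M|_{V_{0}}}(t)\,\chi_{M|_{V_{1}}}(t)=\chi_{\bar M(A,\lambda,\dd)}(t)\prod_{i=1}^{l}(t-\lambda_{i})^{d_{i}-1},
\]
which is the first formula. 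Comparing the products of the roots (equivalently the constant terms, the sign $(-1)^{\delta}$ cancelling) then gives $\det M(A,\lambda,\dd)=\det\bar M(A,\lambda,\dd)\prod_{i=1}^{l}\lambda_{i}^{d_{i}-1}$.

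The main thing to get right, and the only delicate point, is the bookkeeping in the $V_{0}$ block: one must track whether the weight $d_{j}$ attaches to the row or the column index, so as to confirm that the restricted matrix is genuinely conjugate to $\bar M$ as defined and not to some other matrix with a different characteristic polynomial. I also prefer to phrase the argument through invariant subspaces rather than by assembling a full eigenbasis, precisely so that no hypothesis on the diagonalizability of $\bar M$ is needed and the degenerate behaviour warned about in Remark \ref{inducezerovector} causes no trouble.
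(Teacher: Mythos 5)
Your proof is correct and follows essentially the same route as the paper, which simply derives the theorem from Lemma \ref{Meigenvalues}: the block-Fourier vectors $\zz_{d_j,k}$ give the factor $\prod_i(t-\lambda_i)^{d_i-1}$, and the all-ones vectors reduce the remaining $l$-dimensional piece to $\bar M(A,\lambda,\dd)$. Your invariant-subspace formulation, together with the observation that $A\,\diag(d_1,\ldots,d_l)+\diag(\lambda)$ is conjugate to $\bar M(A,\lambda,\dd)$ via $\diag(d_1,\ldots,d_l)$, is in fact slightly more careful than the paper's one-line deduction, since it yields the full characteristic polynomial without any diagonalizability assumption on $\bar M$.
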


\section{Main results}\label{Main results}

In this section, we will compute the Hessian matrices of the complete graph and the complete bipartite graph, defined in Section \ref{Introduction}. 
We define $\tilde H_{\Gamma}$ to be the special value of $H_{\Gamma}$ at $x_{i}=1$ for all $i$. 
Note that $(i,j)$-entry in $\tilde H_{\Gamma}$ is the number of the spanning trees including edges $i, j$.  
First we compute the number of spanning trees containing the edges. 
Next we compute $\det \tilde H_{\Gamma}$ by theorems in Section \ref{The eigenvectors and eigenvalues of block matrices}. 

Terms of graphs in this section follows mainly \cite{MR1271140}.



\subsection{The Hessian of the complete graph}
Here we compute the Hessians of the complete graphs (Theorem \ref{The Hessian of the complete graph}).

Let $n\geq 3$. 
The $(i,j)$-entry in $\tilde H_{K_{n}}$ is the number of trees including edges $i, j$ in $K_{n}$ with $n$ vertices.  
Moon gave the following formula for the number of trees containing a prescribed set of edges. 

\begin{Theorem}[Moon \cite{MR0231755}]\label{Moon}
Let $F$ be a forest with $k$ connected components. 
The number of the trees with $n$ vertices containing $F$  is
\begin{align*}
n^{k-2}\prod_{i=1}^{k}j_{i}, 
\end{align*}
where $j_{i}$ is the number of vertices of each component of $F$. 
\end{Theorem}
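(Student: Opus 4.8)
The plan is to reduce the count to the weighted version of Cayley's formula and then to evaluate that with Pr\"ufer sequences. Regarding each isolated vertex as a singleton component, I may assume that $F$ spans all $n$ vertices; write $C_{1}, \ldots, C_{k}$ for its components, with $|C_{i}|=j_{i}$, so that $j_{1}+\cdots+j_{k}=n$ and $F$ has exactly $n-k$ edges. A tree $T$ on the $n$ vertices with $T\supseteq F$ has $n-1$ edges, hence arises from $F$ by adjoining exactly $(n-1)-(n-k)=k-1$ further edges.

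First I would identify which edges can be adjoined. No adjoined edge may join two vertices of the same $C_{i}$, since $C_{i}$ is already connected in $F\subseteq T$ and such an edge would create a cycle; thus every adjoined edge joins two distinct components. Contracting each $C_{i}$ to a single super-vertex therefore sends $T$ to a connected graph on $k$ super-vertices with $k-1$ edges, that is, to a spanning tree $\tau$ of the complete graph $K_{k}$ on the super-vertices (a connected graph on $k$ vertices with $k-1$ edges is automatically a simple tree). Conversely, a spanning tree $\tau$ of $K_{k}$ together with, for each edge $\{a,b\}$ of $\tau$, a choice of one endpoint in $C_{a}$ and one in $C_{b}$, reconstitutes a tree $T\supseteq F$, and this sets up a bijection.

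It remains to count. A fixed edge $\{a,b\}$ of $\tau$ can be realized in $j_{a}j_{b}$ ways, so the number of trees $T\supseteq F$ contracting to $\tau$ equals $\prod_{\{a,b\}\in\tau}j_{a}j_{b}=\prod_{a=1}^{k}j_{a}^{d_{a}(\tau)}$, where $d_{a}(\tau)$ is the degree of $a$ in $\tau$. The total is therefore $\sum_{\tau}\prod_{a=1}^{k}j_{a}^{d_{a}(\tau)}$, the sum running over all spanning trees $\tau$ of $K_{k}$. The crux of the argument --- and the step I expect to be the main obstacle --- is to evaluate this sum. I would invoke the Pr\"ufer correspondence, a bijection between spanning trees $\tau$ of $K_{k}$ and sequences $s\in\{1,\ldots,k\}^{k-2}$ under which vertex $a$ occurs exactly $d_{a}(\tau)-1$ times. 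This gives
\begin{align*}
\sum_{\tau}\prod_{a=1}^{k}j_{a}^{d_{a}(\tau)}
&=\left(\prod_{a=1}^{k}j_{a}\right)\sum_{s\in\{1,\ldots,k\}^{k-2}}\prod_{a=1}^{k}j_{a}^{\#\{t\,:\,s_{t}=a\}} \\
&=\left(\prod_{a=1}^{k}j_{a}\right)(j_{1}+\cdots+j_{k})^{k-2},
\end{align*}
where the last equality expands the product over the $k-2$ positions of $s$ independently. Since $j_{1}+\cdots+j_{k}=n$, this equals $n^{k-2}\prod_{i=1}^{k}j_{i}$, as claimed.

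Finally I would dispose of the degenerate case $k=1$ separately, where the Pr\"ufer argument formally involves sequences of length $-1$: here $F$ is itself a spanning tree, the only tree containing it is $F$ itself, and the formula reads $n^{-1}\cdot n=1$, in agreement. The case $k=2$ already works, the empty Pr\"ufer sequence contributing the single factor $j_{1}j_{2}$, so no further special treatment is needed.
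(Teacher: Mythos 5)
Your proof is correct, but there is nothing in the paper to compare it against: the paper states Moon's theorem as a quoted result from the literature (citing Moon) and gives no proof of it. Your argument is the standard and complete one. The reduction to a spanning forest by treating uncovered vertices as singleton components is the right reading of the statement (and is consistent with how the paper applies the theorem, e.g.\ $3n^{n-4}$ and $4n^{n-4}$ for adjacent and disjoint pairs of edges in $K_{n}$); the contraction step correctly identifies trees $T\supseteq F$ with pairs consisting of a spanning tree $\tau$ of $K_{k}$ and a choice of endpoints in $C_{a}$, $C_{b}$ for each edge $\{a,b\}$ of $\tau$ (your observation that a connected graph on $k$ vertices with $k-1$ edges is automatically a simple tree disposes of loops and multi-edges in the contraction); and the evaluation of $\sum_{\tau}\prod_{a}j_{a}^{d_{a}(\tau)}$ via the Pr\"ufer correspondence, using that vertex $a$ occurs $d_{a}(\tau)-1$ times in the sequence, is exactly the multinomial expansion of $(j_{1}+\cdots+j_{k})^{k-2}$. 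Your separate treatment of $k=1$ is also needed and handled correctly. An alternative route, closer in spirit to the tools the paper actually develops, would be to apply the matrix--tree theorem to the graph $K_{n}/F$ obtained by contracting the components of $F$, as the paper does for the complete bipartite case in Lemma \ref{the number of spanning trees in the complete bipartite graph including some edges}; your combinatorial argument is more elementary and self-contained.
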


Let $\tilde H_{K_{n}}=(h_{ij})$. 
It follows from Theorem \ref{Moon} that
\begin{align*}
h_{ij}=
\begin{cases}
0 & (i=j), \\
3n^{n-4} & (\# i\cap j=1), \\
4n^{n-4} & (\# i\cap j=0). 
\end{cases}
\end{align*}

We can prove the following. 

\begin{thm}\label{keyprop}
The eigenvalues of $\tilde H_{K_{n}}$ are $-2n^{n-4}$, $-n^{n-3}$ and $2(n-2)n^{n-3}$. 
The dimensions of the eigenspaces associate with $-2n^{n-4}$, $-n^{n-3}$ and $2(n-2)n^{n-3}$ are $\binom{n}{2}-n$, $n-1$ and $1$, respectively. 
\end{thm}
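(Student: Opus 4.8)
The plan is to strip off the common scalar factor $n^{n-4}$ and to express what remains as a linear combination of the identity, the all-ones matrix, and the Gram matrix of the vertex--edge incidence matrix of $K_{n}$. Write $N=\binom{n}{2}$ and $M=n^{-(n-4)}\tilde H_{K_{n}}$, so that $M$ is the $N\times N$ matrix with diagonal entries $0$, entry $3$ when the two edges meet in a single vertex, and entry $4$ when they are disjoint. Let $B$ be the $n\times N$ vertex--edge incidence matrix of $K_{n}$, whose $(a,e)$-entry is $1$ when the vertex $a$ lies on the edge $e$ and $0$ otherwise. Counting common endpoints shows that $(B^{t}B)_{ef}$ equals $2$, $1$, or $0$ according as $e=f$, the edges $e$ and $f$ share exactly one vertex, or $e$ and $f$ are disjoint; comparing this with the entries of $M$ gives the identity
\begin{align*}
M=4J_{NN}-2I_{N}-B^{t}B.
\end{align*}
It therefore suffices to diagonalize $B^{t}B$ simultaneously with $J_{NN}$.

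First I would obtain the spectrum of $B^{t}B$ from that of its smaller companion $BB^{t}$. Counting edges incident to a given pair of vertices shows
\begin{align*}
BB^{t}=(n-2)I_{n}+J_{nn},
\end{align*}
so Lemma \ref{dcmdeterminant}, applied with $\alpha=1$ and $\lambda=n-2$, yields that $BB^{t}$ has eigenvalue $2n-2$ on the all-ones vector $\zz_{n,0}$ and eigenvalue $n-2$ with multiplicity $n-1$. Because $B^{t}B$ and $BB^{t}$ share the same nonzero eigenvalues with the same multiplicities, and because $BB^{t}$ is nonsingular of size $n$ (both $2n-2$ and $n-2$ are positive for $n\geq 3$), the matrix $B$ has full row rank $n$. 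Consequently $B^{t}B$ has eigenvalue $2n-2$ with multiplicity $1$, eigenvalue $n-2$ with multiplicity $n-1$, and eigenvalue $0$ with multiplicity $N-n$.

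Finally I would transport this back through $M=4J_{NN}-2I_{N}-B^{t}B$. The all-ones vector of length $N$ is a common eigenvector of the three summands: $J_{NN}$ scales it by $N$, while $B^{t}B$ scales it by $2n-2$, since $B$ sends it to $(n-1)\zz_{n,0}$ and $B^{t}$ sends that to $(2n-2)$ times the all-ones vector. On the orthogonal complement of this vector $J_{NN}$ vanishes, so there $M$ acts as $-2I_{N}-B^{t}B$; as $2n-2$ differs from both $n-2$ and $0$, every eigenvector of $B^{t}B$ for the eigenvalues $n-2$ and $0$ already lies in that complement. Reading off the three cases produces eigenvalues $4N-2-(2n-2)=2n(n-2)$ with multiplicity $1$, $-2-(n-2)=-n$ with multiplicity $n-1$, and $-2-0=-2$ with multiplicity $N-n$; restoring the factor $n^{\,n-4}$ converts these into $2(n-2)n^{n-3}$, $-n^{n-3}$, and $-2n^{n-4}$, with multiplicities $1$, $n-1$, and $\binom{n}{2}-n$. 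The one genuinely delicate point, and the step I would watch most carefully, is the multiplicity $N-n$ of the zero eigenvalue of $B^{t}B$: it rests on the full-rank claim for $B$ (equivalently, on the nonsingularity of $BB^{t}$) and on the separation $2n-2\neq n-2,0$, which is what keeps the $J_{NN}$-direction from mixing into the $n-2$ and $0$ eigenspaces.
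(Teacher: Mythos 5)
Your argument is correct, and it takes a genuinely different route from the paper. The paper fixes a cyclic group action on the vertices, partitions $E(K_{n})$ into orbits, and realizes $\tilde H_{K_{n}}$ as a block matrix with circulant blocks; the eigenvalues are then extracted via the discrete-Fourier machinery of Section \ref{The eigenvectors and eigenvalues of block matrices}, with a rank-one-perturbation argument ($\bar C_{k}-(-2)I_{l}$ has rank one) plus a trace computation to pin down the last eigenvalue, and the whole analysis must be done twice, once for $n$ odd and once for $n$ even, because the orbit structure differs. Your decomposition $n^{-(n-4)}\tilde H_{K_{n}}=4J_{NN}-2I_{N}-B^{t}B$ instead places the matrix in the adjacency algebra of the line graph of $K_{n}$, and the spectrum falls out of the standard facts that $BB^{t}=(n-2)I_{n}+J_{nn}$, that $B^{t}B$ and $BB^{t}$ share nonzero eigenvalues with multiplicity, and that $J_{NN}$ kills the orthogonal complement of the all-ones vector; every step you flag as delicate (full row rank of $B$, the separation $2n-2\neq n-2,\,0$ for $n\geq 3$) is verified correctly. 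What each approach buys: yours is shorter, uniform in $n$ (no odd/even split), uses only Lemma \ref{dcmdeterminant} from the paper's toolkit, and exhibits the eigenspaces explicitly as $\ker B$ and the image of $B^{t}$ restricted to $\zz_{n,0}^{\perp}$; the paper's heavier circulant formalism pays off elsewhere, since the same Section \ref{The eigenvectors and eigenvalues of block matrices} lemmas are reused verbatim for the complete bipartite case and for the cofactor computations in Lemma \ref{the number of spanning trees in the complete bipartite graph including some edges}.
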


\begin{Rem}
Theorem \ref{keyprop} implies that one of the eigenvalues of $\tilde H_{K_{n}}$ is positive and that the others are negative. 
\end{Rem}

Theorem \ref{keyprop} implies Theorem \ref{The Hessian of the complete graph}. 

\begin{thm}\label{The Hessian of the complete graph}
Let $n\geq 3$. 
Then the determinant of $\tilde H_{K_{n}}$ is 
\begin{align*}
{(-1)}^{\binom{n}{2}-1}{2}^{\binom{n}{2}-n+1}n^{n+\binom{n}{2}(n-4)}(n-2). 
\end{align*}
Hence the Hessian $\det H_{K_{n}}$ does not vanish for $n\geq 3$. 
\end{thm}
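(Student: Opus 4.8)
The plan is to derive Theorem \ref{The Hessian of the complete graph} directly from the eigenvalue data in Theorem \ref{keyprop}, treating the determinant as the product of all eigenvalues counted with multiplicity. The matrix $\tilde H_{K_{n}}$ has size $\binom{n}{2} \times \binom{n}{2}$, since there is one row and column for each edge of $K_{n}$. Theorem \ref{keyprop} tells us that its spectrum consists of $-2n^{n-4}$ with multiplicity $\binom{n}{2}-n$, the value $-n^{n-3}$ with multiplicity $n-1$, and $2(n-2)n^{n-3}$ with multiplicity $1$. As a sanity check, the multiplicities sum to $\left(\binom{n}{2}-n\right)+(n-1)+1 = \binom{n}{2}$, which equals the full size of the matrix, so these account for all eigenvalues and the determinant is exactly their product.

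So the first step is simply to write
\begin{align*}
\det \tilde H_{K_{n}} = \left(-2n^{n-4}\right)^{\binom{n}{2}-n}\left(-n^{n-3}\right)^{n-1}\left(2(n-2)n^{n-3}\right).
\end{align*}
The second step is to collect the three factor types. For the sign, I would gather all factors of $-1$: the first factor contributes $(-1)^{\binom{n}{2}-n}$ and the second contributes $(-1)^{n-1}$, so the total sign is $(-1)^{\binom{n}{2}-n+n-1} = (-1)^{\binom{n}{2}-1}$, matching the stated answer. For the powers of $2$, only the first and third factors contribute, giving $2^{\binom{n}{2}-n}\cdot 2 = 2^{\binom{n}{2}-n+1}$. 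The linear factor $(n-2)$ appears only once, from the third eigenvalue.

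The remaining step is the bookkeeping of the powers of $n$. The first factor contributes $(n-4)\left(\binom{n}{2}-n\right)$, the second contributes $(n-3)(n-1)$, and the third contributes $(n-3)$. Adding these exponents and simplifying should yield $n+\binom{n}{2}(n-4)$; this is where I would be most careful, since it is a small polynomial identity in $n$ rather than a conceptual point. Indeed, one checks $(n-3)(n-1)+(n-3) = (n-3)n$, and combining with $(n-4)\binom{n}{2}-(n-4)n = (n-4)\binom{n}{2}-n^{2}+4n$ gives $(n-4)\binom{n}{2}+(n-3)n-n^{2}+4n = (n-4)\binom{n}{2}+n$, as desired.

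The main obstacle is purely the arithmetic of combining the $n$-exponents correctly; there is no genuine difficulty once Theorem \ref{keyprop} is in hand, since the determinant of a diagonalizable matrix is the product of its eigenvalues with multiplicity. The final clause, that $\det H_{K_{n}}$ does not vanish for $n \geq 3$, then follows immediately: the specialization $\tilde H_{K_{n}}$ of the polynomial matrix $H_{K_{n}}$ at $x_{i}=1$ has nonzero determinant by the displayed formula (each of $2$, $n$, and $n-2$ is nonzero for $n \geq 3$), so the polynomial $\det H_{K_{n}}$ cannot be identically zero.
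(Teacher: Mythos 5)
Your proposal is correct and follows exactly the route the paper intends: the paper derives Theorem \ref{The Hessian of the complete graph} directly from Theorem \ref{keyprop} by taking the product of the eigenvalues with their multiplicities, which is precisely your computation (and your exponent bookkeeping checks out). The only difference is that you spell out the arithmetic the paper leaves implicit.
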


Now we prove Theorem \ref{keyprop}. 
Let $n\geq 3$. 
We define $a(e, e')$ by
\begin{align*}
a(e, e')=
\begin{cases}
0 & (e=e'), \\
3 & (\#(e\cap e' )=1), \\
4 & \text{otherwise},  
\end{cases}
\end{align*}
for $e, e' \in E(K_{n})$. 

Let $V(K_{n})=\Set{0,1,\ldots,n-1}$. 
We consider a group action on $V(K_{n})$ as follows:
Let $G$ be the cyclic group generated by $\sigma$ of the order $n$.    
For a vertex of $K_{n}$, define
\begin{align*}
\sigma(i)=i+1 \pmod{n}. 
\end{align*}
The action of $G$ on $V(K_{n})$ induces an action on $E(K_{n})$ by
\begin{align*}
\sigma\Set{i,j}=\Set{\sigma(i),\sigma(j)}
\end{align*}
for $\Set{i, j}\in E(K_{n})$. 
Under this notation, 
first we show Theorem \ref{keyprop} in the case where $n$ is odd. 
And then, we show Theorem \ref{keyprop} in the case where $n$ is even. 

Consider the case where $n$ is odd. 
Let $n=2l+1$. 
For $1\leq i,j \leq l$, we define 
\begin{align*}
C^{ij}&=\left(a(\sigma^{k}e_{i}, \sigma^{k'}e_{j})\right)_{0\leq k, k' \leq n-1}, \\
C&=\left(C^{ij}\right)_{1 \leq i, j \leq l}, 
\end{align*}
where $e_{i}=\Set{0,i}\in E(K_{n})$. 
Note that $C^{ij}$ is an $n \times n$ cyclic matrix by the definition of the action. 
Since
\begin{align*}
E(K_{n})=\bigsqcup_{i=1}^{l}\Set{\sigma^{k}e_{i}|0\leq k \leq n-1}, 
\end{align*}
the matrix $n^{n-4}C$ is $\tilde H_{K_{n}}$. 
Let us calculate eigenvalues of $C$ by Theorem \ref{big to small}. 
For $0\leq k \leq n-1$, let
\begin{align*}
\bar C_{k}&=\left((C^{ij})^{(1)}\zz_{n, k}\right)_{1 \leq i,j \leq l}. 
\end{align*}

First we consider the case where $1 \leq k \leq n-1$. 
\begin{Lem}\label{barCodd}
Let $1 \leq k \leq n-1$. 
Then  
\begin{align*}
\bar C_{k}-(-2)I_{l}=\left(-\xi_{i}\xi'_{j}\right)_{1 \leq i,j \leq l}, 
\end{align*}
where 
\begin{align*}
\xi_{i}&=\sum_{v\in e_{i}}\zeta_{n}^{vk}=1+\zeta_{n}^{ik}, \\
\xi'_{j}&=\sum_{v\in e_{j}}\zeta_{n}^{-vk}=1+\zeta_{n}^{-jk}
\end{align*}
for all $i, j$. 
Moreover the rank of $\bar C_{k}-(-2)I_{l}$ is one. 
\end{Lem}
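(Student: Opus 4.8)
The plan is to compute the $(i,j)$-entry of $\bar C_{k}$ directly and recognize $\bar C_{k}+2I_{l}$ as an outer product. By Lemma \ref{cyclic eigenvector} the entry is
\begin{align*}
(\bar C_{k})_{ij}=(C^{ij})^{(1)}\zz_{n,k}=\sum_{k'=0}^{n-1}a(e_{i},\sigma^{k'}e_{j})\zeta_{n}^{k'k},
\end{align*}
since the first row of the cyclic matrix $C^{ij}$ records $a(\sigma^{0}e_{i},\sigma^{k'}e_{j})=a(e_{i},\sigma^{k'}e_{j})$. The key starting move is to rewrite the combinatorial weight in terms of vertex overlaps. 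Writing $[P]$ for the quantity that is $1$ when $P$ holds and $0$ otherwise, one checks on all three cases that $a(e,e')=4-\#(e\cap e')-2\,[e=e']$, and I would substitute this and split the sum into three pieces.

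First I would dispose of the constant piece: it equals $4\sum_{k'=0}^{n-1}\zeta_{n}^{k'k}$, which vanishes for $1\le k\le n-1$ because $\zeta_{n}^{k}\neq1$. For the overlap piece I would expand $\#(e_{i}\cap\sigma^{k'}e_{j})$ as a sum over vertices $v$ of the product of the indicators of $v\in e_{i}$ and $v\in\sigma^{k'}e_{j}$. Using $\sigma^{k'}e_{j}=\{k',k'+j\}$, the second indicator is that of $v-k'\in e_{j}$, and the change of variable $w=v-k'$ factorizes the double sum:
\begin{align*}
\sum_{k'=0}^{n-1}\#(e_{i}\cap\sigma^{k'}e_{j})\,\zeta_{n}^{k'k}
=\Bigl(\sum_{v\in e_{i}}\zeta_{n}^{vk}\Bigr)\Bigl(\sum_{w\in e_{j}}\zeta_{n}^{-wk}\Bigr)=\xi_{i}\xi'_{j}.
\end{align*}
This is exactly where the stated vectors $\xi_{i}=1+\zeta_{n}^{ik}$ and $\xi'_{j}=1+\zeta_{n}^{-jk}$ enter, and this piece contributes $-\xi_{i}\xi'_{j}$.

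The third piece counts, with weight $-2$, the indices $k'$ with $e_{i}=\sigma^{k'}e_{j}$. Here I would use that $n$ is odd: the generators $e_{1},\dots,e_{l}$ represent the $l$ distinct rotation orbits of edges (by circular distance), each orbit has trivial stabilizer because $\sigma^{k'}e_{i}=e_{i}$ would force $2i\equiv0\pmod n$, which is impossible for odd $n$ and $1\le i\le l$, and two different generators never share an orbit. Hence $e_{i}=\sigma^{k'}e_{j}$ holds precisely when $i=j$ and $k'=0$, contributing $-2$ on the diagonal and $0$ elsewhere. Assembling the three pieces yields $(\bar C_{k})_{ij}=-2\,[i=j]-\xi_{i}\xi'_{j}$, which is the claimed identity.

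For the rank statement I would note that $(-\xi_{i}\xi'_{j})_{i,j}$ is the outer product of $(\xi_{i})_{i}$ and $(-\xi'_{j})_{j}$, so its rank is at most one, and equals one as soon as neither vector is zero. Each $\xi_{i}=1+\zeta_{n}^{ik}$ is nonzero, for $\xi_{i}=0$ would make $-1$ a power of the primitive $n$th root $\zeta_{n}$, impossible for odd $n$; the same holds for every $\xi'_{j}$. The main obstacle I anticipate is the bookkeeping in the coincidence piece: one must confirm that over the range $1\le i,j\le l$ the only solution of $e_{i}=\sigma^{k'}e_{j}$ is the diagonal one, and this rests entirely on the parity of $n$, so the argument will have to be reworked in the subsequent even case.
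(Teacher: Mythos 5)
Your proof is correct, and while it arrives at the same entrywise formula for $\bar C_{k}$ as the paper, it gets there by a genuinely different decomposition. The paper splits into the cases $e_{i}\neq e_{j}$ and $e_{i}=e_{j}$, explicitly lists the shifts $k'$ for which $e_{i}$ and $\sigma^{k'}e_{j}$ share a vertex, and evaluates the resulting geometric sums one case at a time. You instead use the uniform identity $a(e,e')=4-\#(e\cap e')-2\,[e=e']$ and factor the overlap sum $\sum_{k'}\#(e_{i}\cap\sigma^{k'}e_{j})\zeta_{n}^{k'k}=\xi_{i}\xi'_{j}$ by expanding the cardinality as a sum of vertex indicators and changing variables; this is a small Fourier/convolution argument that makes the outer-product (rank-one) structure appear for a structural reason rather than by inspection of the final formula, and it avoids the diagonal/off-diagonal case split entirely (the $-2$ on the diagonal is cleanly isolated in the coincidence term, whose analysis is where the parity of $n$ enters via the trivial stabilizer). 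Your treatment of the rank is also slightly more complete than the paper's: the paper asserts that the outer product has rank one, whereas you verify that $\xi_{i}=1+\zeta_{n}^{ik}\neq 0$ (and likewise $\xi'_{j}\neq 0$) because $-1$ is not an $n$th root of unity for odd $n$, ruling out rank zero. The trade-off is that the paper's concrete enumeration of shared-vertex shifts is the template it reuses almost verbatim in the even case (Lemma \ref{barDeven}), while, as you note yourself, your coincidence and nonvanishing arguments lean on the oddness of $n$ and would need to be redone there.
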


\begin{proof}
We fix $k$ and compute $(C^{ij})^{(1)}\zz_{n,k}$. 
First we consider the case where $e_{i}\neq e_{j}$. 
The edges $e_{i}$ and $\sigma^{l} e_{j}$ share their vertices if and only if 
$l=0$, $l=i$, $j+l=0$, and $j+l=i$. 
Since $e_{i}\neq e_{j}$, 
there does not exist $l$ such that $e_{i}=\sigma^{l} e_{j}$. 
Hence if
$l\in\Set{0,i,-j,i-j}$, 
then 
\begin{align*}
a(e_{i}, \sigma^{l} e_{j})=3, 
\end{align*} 
and if $l\notin\Set{0,i,-j,i-j}$,  
then
\begin{align*}
a(e_{i}, \sigma^{l} e_{j})=4. 
\end{align*}
Therefore
\begin{align*}
(C^{ij})^{(1)}\zz_{n,k}
&=3(\sum_{l\in\Set{0,i,-j,i-j}}\zeta_{n}^{kl})+4(\sum_{l\notin\Set{0,i,-j,i-j}}\zeta_{n}^{kl}) \\
&=-1-\zeta_{n}^{ki}-\zeta_{n}^{-kj}-\zeta_{n}^{k(i-j)} \\
&=-\xi_{i}\xi'_{j}. 
\end{align*}
Next we consider the case where $e_{i}=e_{j}$. 
The edges $e_{i}$ and $\sigma^{l}e_{i}$ share their vertices if and only if $l=0, l=i$ and $l+i=0$. 
If $l=0$, then 
\begin{align*}
a(e_{i}, e_{i})=0, 
\end{align*}
if $l=i$ or $l=-i$, then 
\begin{align*}
a(e_{i}, \sigma^{l}e_{i})=3. 
\end{align*}
Hence if $l \notin \Set{0, i, -i}$, then 
\begin{align*}
a(e_{i}, \sigma^{l}e_{i})=4. 
\end{align*}
Therefore
\begin{align*}
(C^{ij})^{(1)}\zz_{n,k}
&=0\zeta_{n}^{0}+3(\sum_{l\in\Set{i,-i}}\zeta_{n}^{kl})+4(\sum_{l\notin\Set{0,i,-i}}\zeta_{n}^{kl}) \\
&=-4-\zeta_{n}^{ki}-\zeta_{n}^{-ki} \\
&=-\xi_{i}\xi_{i}'-2. 
\end{align*}

We have 
\begin{align*}
\bar C_{k}-(-2)I_{l}&=
\left(-\xi{i}\xi_{j}'\right)_{1 \leq i,j \leq l} \\
&=-
\begin{pmatrix}
\xi_{1} \\
\vdots \\
\xi_{l} 
\end{pmatrix}
\begin{pmatrix}
\xi_{1}'&\cdots&\xi_{l}'
\end{pmatrix}. 
\end{align*}
Hence the rank of $\bar{C}_{k}-(-2)I_{l}$ is one. 

\end{proof}

%

Next we obtain all eigenvalues of $\bar C_{k}$ for $1\leq k \leq n-1$ by computing the trace of $\bar C_{k}$. 

\begin{Lem}\label{Lem6}
For $1 \leq k \leq n-1$, 
the eigenvalues of $\bar C_{k}$ are $-2$ and $-n$. 
The dimensions of the eigenspaces of $\bar C_{k}$ associate with $-2$ and $-n$ are $l-1$ and  $1$, respectively. 
\end{Lem}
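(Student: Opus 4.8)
The plan is to combine the rank-one structure established in Lemma~\ref{barCodd} with a trace computation to pin down both eigenvalues and their multiplicities. From Lemma~\ref{barCodd} we know that $\bar C_{k}+2I_{l}$ has rank one for $1\leq k\leq n-1$, so $\bar C_{k}$ has the eigenvalue $-2$ with multiplicity at least $l-1$: indeed, $\bar C_{k}+2I_{l}=-\bbeta\,\bbeta'^{\,t}$ where $\bbeta=(\xi_{i})_{i}$ and $\bbeta'=(\xi'_{j})_{j}$, and any vector $\ww$ with $\bbeta'^{\,t}\ww=0$ satisfies $(\bar C_{k}+2I_{l})\ww=\zero$, i.e. $\bar C_{k}\ww=-2\ww$. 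The solution space of the single linear equation $\bbeta'^{\,t}\ww=0$ has dimension $l-1$, so $-2$ is an eigenvalue of multiplicity at least $l-1$.

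The remaining eigenvalue is then forced by the trace. Since the sum of all $l$ eigenvalues equals $\operatorname{tr}\bar C_{k}$, and $l-1$ of them equal $-2$, the last eigenvalue is $\operatorname{tr}\bar C_{k}-(-2)(l-1)=\operatorname{tr}\bar C_{k}+2(l-1)$. First I would compute this trace. From the diagonal entries derived in the proof of Lemma~\ref{barCodd}, namely $(C^{ii})^{(1)}\zz_{n,k}=-\xi_{i}\xi'_{i}-2$, we get
\begin{align*}
\operatorname{tr}\bar C_{k}=\sum_{i=1}^{l}\bigl(-\xi_{i}\xi'_{i}-2\bigr)
=-\sum_{i=1}^{l}\bigl(1+\zeta_{n}^{ik}\bigr)\bigl(1+\zeta_{n}^{-ik}\bigr)-2l.
\end{align*}
Expanding the product gives $\xi_{i}\xi'_{i}=2+\zeta_{n}^{ik}+\zeta_{n}^{-ik}$, so the trace involves $\sum_{i=1}^{l}\bigl(\zeta_{n}^{ik}+\zeta_{n}^{-ik}\bigr)$. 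Because $n=2l+1$ is odd and $1\leq k\leq n-1$, the exponents $\{\pm ik \bmod n : 1\leq i\leq l\}$ run over exactly the nonzero residues modulo $n$, so this sum of nontrivial $n$th roots of unity equals $-1$. Hence $\sum_{i}\xi_{i}\xi'_{i}=2l-1$, giving $\operatorname{tr}\bar C_{k}=-(2l-1)-2l=-4l+1$, and the last eigenvalue is $-4l+1+2(l-1)=-2l-1=-n$.

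It remains only to verify that the simple eigenvalue $-n$ is genuinely distinct from $-2$, so that the multiplicities are exactly $l-1$ and $1$ as claimed. Since $n=2l+1\geq 3$ we have $-n\neq -2$ unless $n=2$, which is excluded, so the eigenvalue $-2$ has multiplicity exactly $l-1$ and $-n$ has multiplicity exactly $1$. The main obstacle in this argument is the root-of-unity sum: one must check carefully that for each fixed $k$ with $1\leq k\leq n-1$ the map $i\mapsto ik \bmod n$ is injective on $\{1,\dots,l\}$ and that together with their negatives these exhaust all nonzero residues, which relies on $n$ being odd and $\gcd(k,n)$ behaving correctly; the cleanest route is simply to note $\sum_{i=1}^{l}(\zeta_{n}^{ik}+\zeta_{n}^{-ik})=\sum_{m=1}^{n-1}\zeta_{n}^{mk}=-1$ since $k\not\equiv 0$, which sidesteps any injectivity bookkeeping.
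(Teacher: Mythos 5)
Your proof is correct and follows essentially the same route as the paper: the rank-one structure of $\bar C_{k}+2I_{l}$ from Lemma~\ref{barCodd} gives the eigenvalue $-2$ with eigenspace of dimension $l-1$, and the trace computation (using $\sum_{i=1}^{l}(\zeta_{n}^{ik}+\zeta_{n}^{-ik})=-1$ for $k\not\equiv 0$) forces the remaining eigenvalue to be $-n$. You simply spell out the details the paper leaves implicit, including the check that $-n\neq-2$ so the multiplicities are exact.
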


\begin{proof}
The trace of $\bar C_{k}$ is 
\begin{align*}
\sum_{i=1}^{l}-\xi_{i}\xi'_{i}
&=\sum_{i=1}^{l} (-4-\zeta_{n}^{ki}-\zeta_{n}^{-ki}) \\
&= -4l+1 \\
&= -4\frac{n-1}{2}+1 \\
&= -2n+3. 
\end{align*}
Therefore it follows from Lemmas \ref{big to small} and \ref{barCodd} that  
the eigenvalues of $\bar C_{k}$ are $-2$ and $-n$, and 
the dimensions of the eigenspaces of $\bar C_{k}$ associate with $-2$ and $-n$ are $l-1$ and  $1$, respectively. 
\end{proof}

Similarly, we obtain the consequence in the case where $k=0$. 

\begin{Lem}\label{Lem9}
The eigenvalues of $\bar C_{0}$ are $-2$ and $2n(n-2)$. 
The dimensions of the eigenspaces of $\bar C_{0}$ associate with $-2$ and $2n(n-2)$ are $l-1$ and $1$, respectively. 
\end{Lem}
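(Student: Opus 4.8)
The plan is to imitate the proof of Lemma \ref{Lem6}, the only change being that the character sum $\sum_{l=0}^{n-1}\zeta_{n}^{kl}$, which vanished for $1\leq k\leq n-1$, now equals $n$ when $k=0$. Since $\zz_{n,0}=(1,1,\ldots,1)^{t}$, the entry $(C^{ij})^{(1)}\zz_{n,0}$ is simply the first-row sum of $C^{ij}$, that is $\sum_{l=0}^{n-1}a(e_{i},\sigma^{l}e_{j})$, so no roots of unity survive and I only have to count how many shifts $\sigma^{l}e_{j}$ meet $e_{i}$.

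First I would reuse verbatim the incidence analysis from the proof of Lemma \ref{barCodd}. For $e_{i}\neq e_{j}$ the shifts sharing a vertex with $e_{i}$ are exactly those with $l\in\Set{0,i,-j,i-j}$, and because $n$ is odd with $1\leq i,j\leq l$ these four indices are pairwise distinct modulo $n$; hence the off-diagonal entry is $3\cdot 4+4\,(n-4)=4n-4$. For $e_{i}=e_{j}$ the sharing shifts are $l\in\Set{0,i,-i}$ (three distinct values, again using $2i\not\equiv 0\pmod n$ for odd $n$), contributing $0$, $3$, $3$, so the diagonal entry is $3\cdot 2+4\,(n-3)=4n-6$. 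This identifies the matrix explicitly as
\begin{align*}
\bar C_{0}=4(n-1)\,J_{ll}-2\,I_{l},
\end{align*}
an instance of the form treated in Lemma \ref{dcmdeterminant} with $\alpha=4(n-1)$, $\lambda=-2$, and size $l$.

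Applying that lemma at once yields the eigenvalue $\lambda=-2$, with $\zz_{l,k}$ for $1\leq k\leq l-1$ as eigenvectors and hence multiplicity $l-1$, together with the single eigenvalue $\lambda+l\alpha$ with eigenvector $\zz_{l,0}$. A short simplification using $l=\tfrac{n-1}{2}$ gives
\begin{align*}
\lambda+l\alpha=-2+\tfrac{n-1}{2}\cdot 4(n-1)=2(n-1)^{2}-2=2n(n-2),
\end{align*}
which is the claimed positive eigenvalue, with multiplicity one.

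There is no real obstacle here beyond the bookkeeping; the one point that genuinely needs the hypothesis that $n$ is odd is the distinctness of the index sets $\Set{0,i,-j,i-j}$ and $\Set{0,i,-i}$ modulo $n$, which is precisely what makes the two entry-counts come out to $4n-4$ and $4n-6$ rather than something smaller. Once $\bar C_{0}$ is recognized as $\alpha J_{ll}+\lambda I_{l}$, Lemma \ref{dcmdeterminant} does all of the spectral work, so the argument is in fact strictly easier than the rank-one-plus-trace computation used for Lemma \ref{Lem6}; alternatively, one could run that same strategy by noting $\bar C_{0}-(-2)I_{l}=4(n-1)J_{ll}$ has rank one and reading off the nonzero eigenvalue from $\operatorname{tr}\bar C_{0}=l(4n-6)$.
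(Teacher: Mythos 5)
Your proof is correct and is essentially the paper's own argument: the paper disposes of Lemma \ref{Lem9} with only the remark that the case $k=0$ is handled ``similarly'' to Lemmas \ref{barCodd} and \ref{Lem6}, i.e., by the same incidence count of shifts $\sigma^{l}e_{j}$ meeting $e_{i}$ (now with the character sum equal to $n$) followed by an elementary spectral computation. Your identification $\bar C_{0}=4(n-1)J_{ll}-2I_{l}$ and appeal to Lemma \ref{dcmdeterminant} is just the cleaner packaging of the rank-one-plus-trace step, and all the arithmetic ($4n-4$ off-diagonal, $4n-6$ diagonal, $-2+l\cdot 4(n-1)=2n(n-2)$) checks out.
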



We obtain the following result by Theorem \ref{big to small}, and Lemmas \ref{Lem6} and \ref{Lem9}.  

\begin{Lem}\label{Ceigenvalues}
The eigenvalues of $C$ are $-2$, $-n$ and $2n(n-2)$. 
The dimensions of the eigenspaces of $C$ associate with $-2$, $-n$ and $2n(n-2)$ are $\binom{n}{2}-n$, $n-1$ and $1$, respectively. 
\end{Lem}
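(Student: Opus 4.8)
The plan is to combine Theorem \ref{big to small} with the three eigenvalue computations for the small matrices $\bar C_{k}$ that we already have in hand. By Theorem \ref{big to small}, every eigenvalue of $\bar C_{k}$ lifts to an eigenvalue of $C$, and moreover $\chi_{C}(t)=\prod_{k=0}^{n-1}\chi_{\bar C_{k}}(t)$, so the spectrum of $C$ (with multiplicities) is exactly the disjoint union over $k=0,1,\dots,n-1$ of the spectra of the $\bar C_{k}$. Thus the task reduces to bookkeeping: collect the eigenvalues and their multiplicities from Lemma \ref{Lem9} (the case $k=0$) and Lemma \ref{Lem6} (the cases $1\le k\le n-1$), then sum the multiplicities.

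First I would record the tally. From Lemma \ref{Lem9}, $\bar C_{0}$ contributes the eigenvalue $2n(n-2)$ with multiplicity $1$ and the eigenvalue $-2$ with multiplicity $l-1$. From Lemma \ref{Lem6}, each of the $n-1$ matrices $\bar C_{k}$ with $1\le k\le n-1$ contributes the eigenvalue $-n$ with multiplicity $1$ and the eigenvalue $-2$ with multiplicity $l-1$. Multiplying through by the factor $n^{n-4}$ (recall $\tilde H_{K_{n}}=n^{n-4}C$) scales every eigenvalue accordingly. Hence the eigenvalue $2n(n-2)\cdot n^{n-4}=2(n-2)n^{n-3}$ appears once, the eigenvalue $-n\cdot n^{n-4}=-n^{n-3}$ appears $n-1$ times, and the eigenvalue $-2\cdot n^{n-4}=-2n^{n-4}$ appears the remaining number of times. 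This matches the three eigenvalues asserted in Theorem \ref{keyprop}.

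Next I would verify the multiplicity of $-2n^{n-4}$ and confirm that the total is $\binom{n}{2}$, the size of $C$. The eigenvalue $-2$ (before scaling) occurs $l-1$ times for each of the $n$ values of $k$, giving $n(l-1)$ in total. Since $n=2l+1$, this equals $n\cdot\frac{n-1-2}{2}=\frac{n(n-3)}{2}=\binom{n}{2}-n$, the claimed dimension. As a sanity check, the three multiplicities sum to $\bigl(\binom{n}{2}-n\bigr)+(n-1)+1=\binom{n}{2}$, which is the number of edges of $K_{n}$ and hence the order of $\tilde H_{K_{n}}$; this consistency confirms the count is complete.

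This argument, however, only disposes of the odd case $n=2l+1$, which is exactly what Lemmas \ref{barCodd}--\ref{Ceigenvalues} were set up to handle. The main obstacle I anticipate is the even case $n=2l$: there the cyclic decomposition of $E(K_{n})$ is not uniform, because the "diameter" edges $\{i,i+l\}$ form orbits of length $l$ rather than $n$. This forces us to use the more delicate block structure of Theorem \ref{Dcharacteristic} and Lemmas \ref{big to small2}--\ref{big to small3} (with the $\frac{1}{t^{n}}$ correction accounting for eigenvectors that fail to lift, as flagged in Remark \ref{remeigenvector}), together with a separate trace and rank-one analysis of the relevant $\bar D_{k}$ analogous to Lemma \ref{barCodd}. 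So the real work is to carry out the even-case analogues of Lemmas \ref{Lem6} and \ref{Lem9}; once those are in place, the same bookkeeping yields the identical three eigenvalues $-2n^{n-4}$, $-n^{n-3}$, $2(n-2)n^{n-3}$ with the same multiplicities, completing the proof.
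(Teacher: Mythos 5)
Your first two paragraphs are exactly the paper's proof: Theorem \ref{big to small} identifies the spectrum of $C$ (with multiplicities) with the disjoint union of the spectra of $\bar C_{0},\dots,\bar C_{n-1}$, and Lemmas \ref{Lem9} and \ref{Lem6} supply the tallies, giving multiplicity $n(l-1)=\tfrac{n(n-3)}{2}=\binom{n}{2}-n$ for $-2$, $n-1$ for $-n$, and $1$ for $2n(n-2)$. Your closing paragraph, however, is out of scope: the matrix $C$ of this lemma is defined only in the odd case $n=2l+1$, so no even-case analysis is needed here --- the even case is handled by the separate block matrix $D$ and Lemma \ref{Deigenvalues}, and the two lemmas are combined only afterwards to obtain Theorem \ref{keyprop}. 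Note also that the rescaling by $n^{n-4}$ you carry out belongs to that later theorem; the present lemma asserts the eigenvalues of $C$ itself, unscaled.
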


Next we consider the case where $n$ is even. 
Let $n=2l$. 
We define the matrix $D^{ij}$ by
\begin{align*}
D^{ij}=
\begin{cases}
\left(a(\sigma^{k}e_{i}, \sigma^{k'}e_{j})\right)_{0 \leq k, k' \leq n-1} & \text{for $1\leq i,j \leq l-1$}, \\
\left(a(\sigma^{k}e_{i}, \sigma^{k'}e_{j})\right)
_{\substack{0 \leq k \leq n-1 \\ 0 \leq k' \leq l}}
& \text{for $1\leq i \leq l-1, j=l$}, \\
\left(a(\sigma^{k}e_{i}, \sigma^{k'}e_{j})\right)
_{\substack{0 \leq k \leq l \\ 0 \leq k' \leq n-1}}
& \text{for $i=l, 1\leq j \leq l-1$}, \\
\left(a(\sigma^{k}e_{i}, \sigma^{k'}e_{j})\right)_{0 \leq k, k' \leq l} & \text{for $i=j=l$}, 
\end{cases} 
\end{align*}
and the block matrix $D$ by $D=\left(D^{ij}\right)_{1 \leq i,j \leq l}$. 
Note that the matrix $n^{n-4}D$ is $\tilde H_{K_{n}}$ 
since
\begin{align*}
E(K_{n})=\left(\bigsqcup_{i=1}^{l-1}\Set{\sigma^{k}e_{i}|0 \leq k \leq n-1}\right)\sqcup\Set{\sigma^{k}e_{l}|0 \leq k \leq l}. 
\end{align*} 
For $0\leq j \leq l-1$, the $(1, k)$ entry and $(1, k+l)$ entry in the $(l, j)$ block satisfy   
\begin{align*}
a(e_{l}, \sigma^{k}e_{j})&=a(\sigma^{l}e_{l}, \sigma^{k}e_{j}) \\
&=a(\sigma^{l}\sigma^{l}e_{l}, \sigma^{l}\sigma^{k}e_{j}) \\
&=a(e_{l}, \sigma^{k+l}e_{j}). 
\end{align*}
For $0\leq i \leq l-1$, the $(i, l)$ block also satisfies the assumption \eqref{eq} since $D$ is a symmetric matrix. 
Hence the matrix $D$ in this section satisfies the assumption of the matrix $D$ in Section \ref{The eigenvectors and eigenvalues of block matrices}.

Let us calculate eigenvalues of $D$ by Lemmas \ref{big to small2} and \ref{big to small3}. 
For $0\leq k \leq 2n-1$, 
we define the $l\times l$ matrix $\bar D_{k}=(d_{ij}^{(k)})_{1\leq i,j \leq l}$ as follows: 
If $k$ is even, then we define 
\begin{align*}
d_{ij}^{(k)}=
\begin{cases}
(D^{ij})^{(1)}\zz_{n,k} & \text{if  $1\leq j \leq l-1$},  \\
(D^{ij})^{(1)}\zz_{l,\frac{k}{2}} & \text{if $j=l$}. 
\end{cases}
\end{align*}
If $k$ is odd, 
then we define
\begin{align*}
d_{ij}^{(k)}=
\begin{cases}
(D^{ij})^{(1)}\zz_{n,k} & \text{if  $1\leq j \leq l-1$},  \\
0 & \text{if $j=l$}. 
\end{cases}
\end{align*}

First we consider the case where $1 \leq k \leq 2n-1$.

\begin{Lem}\label{barDeven}
Let $1 \leq k \leq 2n-1$. 
Then  
\begin{align*}
\bar D_{k}-(-2)I_{l}=
\left(-\xi_{i}\xi'_{j}\right)_{1 \leq i,j \leq l}, 
\end{align*}
where 
\begin{align*}
\xi_{i}&=\sum_{v\in e_{i}}\zeta_{n}^{vk}
\end{align*}
for all $i$, and
\begin{align*}
\xi'_{j}&=
\begin{cases}
\sum_{v\in e_{j}}\zeta_{n}^{-vk} & \text{if $1 \leq j \leq l-1$}, \\
\frac{1}{2}\sum_{v\in e_{j}}\zeta_{n}^{-vk} & \text{if $j=l$}.  
\end{cases}
\end{align*}

Moreover the rank of $\bar D_{k}-(-2)I_{l}$ is one. 
\end{Lem}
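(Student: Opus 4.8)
The plan is to prove this exactly along the lines of Lemma~\ref{barCodd}: I would evaluate each entry $d^{(k)}_{ij}$, which by definition is the scalar $(D^{ij})^{(1)}\zz_{n,k}$ (or $(D^{ij})^{(1)}\zz_{l,k/2}$ when $j=l$ and $k$ is even), block by block, and then read off the rank. The point is that once every entry has been shown to equal $-\xi_i\xi'_j$ off the diagonal and $-\xi_i\xi'_i-2$ on the diagonal, the asserted identity $\bar D_{k}-(-2)I_{l}=(-\xi_i\xi'_j)_{1\le i,j\le l}$ holds by inspection, and the right-hand side factors as the outer product of the column $(\xi_i)_{1\le i\le l}$ with the row $(\xi'_j)_{1\le j\le l}$; hence its rank is at most one. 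So the rank statement is free, and the whole content is the entrywise computation, which I split according to the four block types coming from the orbit sizes $(n,\dots,n,l)$ of $\sigma$ on $E(K_n)$.

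For the blocks with $1\le i,j\le l-1$ the orbit of $e_j$ has full length $n$, and the argument is verbatim that of Lemma~\ref{barCodd}. I would first record which translates $\sigma^{k'}e_j$ meet $e_i=\Set{0,i}$, namely those with $k'\in\Set{0,\,i,\,-j,\,i-j}$ (four distinct residues when $e_i\neq e_j$, three when $e_i=e_j$), assign the weight $a=3$ there and $a=0$ at the single coincidence $\sigma^{k'}e_j=e_i$ on the diagonal, and then use $\sum_{k'=0}^{n-1}\zeta_n^{kk'}=0$ to replace the background weight $4$ by the negative of the exceptional terms. This collapses $(D^{ij})^{(1)}\zz_{n,k}$ to $-(1+\zeta_n^{ik})(1+\zeta_n^{-jk})=-\xi_i\xi'_j$ off the diagonal and to $-\xi_i\xi'_i-2$ on the diagonal, so the upper-left $(l-1)\times(l-1)$ corner already has the claimed shape.

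The genuinely new part is the last row, column and corner, where the diameter edge $e_l=\Set{0,l}$ has an orbit of length only $l$ and the definition of $\bar D_{k}$ splits on the parity of $k$. The crux is the half-range Gauss sum $\sum_{k'=0}^{l-1}\zeta_n^{kk'}$: since $\zeta_n^{kl}=(\zeta_n^{l})^{k}=(-1)^k$, this sum vanishes precisely when $k$ is even and is nonzero when $k$ is odd. This is exactly the dichotomy built into the two cases of the definition of $d^{(k)}_{ij}$, and it is also why the weight $\xi'_l=\tfrac12\sum_{v\in e_l}\zeta_n^{-vk}=\tfrac12(1+(-1)^k)$ carries the factor $\tfrac12$ and vanishes for odd $k$. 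Using $\zeta_l=\zeta_n^{2}$, so that $\zz_{l,k/2}=(\zeta_n^{kk'})_{0\le k'\le l-1}$, and determining the meeting translates as before — for $D^{il}$ only $k'\in\Set{0,i}$ within $\Set{0,\dots,l-1}$, for $D^{lj}$ the four residues $\Set{0,\,l-j,\,l,\,-j}$, and for $D^{ll}$ only the self-coincidence $k'=0$ with $a=0$ — I expect the half-range sum to reproduce the same $(1+\zeta_n^{kl})$-type factor as in the full-orbit blocks, yielding $d^{(k)}_{il}=-\xi_i\xi'_l$, $d^{(k)}_{lj}=-\xi_l\xi'_j$, and (for even $k$) $d^{(k)}_{ll}=-\xi_l\xi'_l-2$.

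The main obstacle is precisely this parity bookkeeping around the diameter edge: I must make sure the restriction of the summation range to $\Set{0,\dots,l-1}$, the single diagonal term with $a=0$, and the factor $\tfrac12$ in $\xi'_l$ all combine so that the $l$-th index is treated on the same footing as the others, and in particular that the corner entry reconciles correctly with the even/odd convention. For odd $k$ one has $\xi_l=1+\zeta_n^{lk}=1+(-1)^k=0$ (and likewise $\xi'_l=0$), so the decoupling of the last coordinate is exactly the phenomenon anticipated by the extra factor $\tfrac1t$ for odd $k$ in Theorem~\ref{Dcharacteristic}; getting this corner to match the convention is the delicate point, while everything else is the routine root-of-unity cancellation already used in Lemma~\ref{barCodd}. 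Once the entries are in hand, the rank-one conclusion follows immediately from the outer-product factorization.
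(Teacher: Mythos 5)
Your plan coincides with the paper's proof: evaluate each entry $d^{(k)}_{ij}$ block by block via root-of-unity cancellation, obtain $-\xi_i\xi'_j$ off the diagonal and $-\xi_i\xi'_i-2$ on it, and read off the rank from the outer-product factorization. Your computations for the $(l-1)\times(l-1)$ corner, for the $l$th row (meeting set $\{0,\,l,\,-j,\,l-j\}$), for the $l$th column with $k$ even, and your observation that the half-range sum $\sum_{k'=0}^{l-1}\zeta_n^{kk'}$ vanishes exactly for even $k\neq 0$ are all correct and match what the paper does.

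However, the one step you explicitly defer --- ``getting this corner to match the convention is the delicate point'' --- is not mere bookkeeping: it is the point where the statement as written actually fails, so no computation will close it. For odd $k$ the definition sets $d^{(k)}_{ll}=0$, whereas the claimed identity would force $d^{(k)}_{ll}=-\xi_l\xi'_l-2=-2$ (since $\xi_l=\xi'_l=0$). Hence $\bar D_k-(-2)I_l$ has the entry $2$ in position $(l,l)$ while the rest of the last row and column vanish, i.e.\ it equals $\bigl(\begin{smallmatrix}-\xi\,\xi'^{T}& \zero\\ \zero & 2\end{smallmatrix}\bigr)$ with $\xi=(\xi_i)_{i\le l-1}$, $\xi'=(\xi'_j)_{j\le l-1}$, and this has rank two, not one. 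The paper's own proof has the same lacuna: for odd $k$ it only checks that the off-corner entries of the $l$th column and row of both sides vanish. The repair is to assert the entrywise identity and the rank-one claim only for even $k$, and for odd $k$ only on the principal $(l-1)\times(l-1)$ block; that weaker version is all that is needed downstream, since in Lemma \ref{Lem6''} the extra eigenvalue $0$ of $\bar D_k$ is accounted for separately via Remark \ref{remeigenvector}. Apart from this (shared) defect at the odd-$k$ corner, your argument is complete and identical in method to the paper's.
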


\begin{proof}
Let $k$ be even. 
We compute the entries $(D^{ij})^{(1)}\zz_{n, k}$, $(D^{il})^{(1)}\zz_{l, \frac{k}{2}}$ and  $(D^{ll})^{(1)}\zz_{l, \frac{k}{2}}$ in $\bar D_{k}$. 

First we consider the case where $1\leq j \leq l-1$. 
In this case we can show the Lemma in the same way as Lemma \ref{barCodd}. 

Next let us consider the case where $1 \leq i \leq l-1$ and $j=l$.  
We compute $(D^{il})^{(1)}\zz_{l, \frac{k}{2}}$ for $1 \leq i \leq l-1$. 
In this case $j=-j$. 
The edges $e_{i}$ and $\sigma^{s} e_{l}$ share their vertices if and only if 
$s=0$, $s=i$. 
Since $e_{i}\neq e_{l}$, 
there does not exist $s$ such that $e_{i}=\sigma^{s} e_{l}$. 
Hence if
$s\in\Set{0,i}$, 
then 
\begin{align*}
a(e_{i}, \sigma^{s} e_{l})=3, 
\end{align*} 
and if $s\notin\Set{0,i}$,  
then
\begin{align*}
a(e_{i}, \sigma^{s} e_{l})=4. 
\end{align*}
Therefore
\begin{align*}
(D^{il})^{(1)}\zz_{l,k}
&=3(\sum_{s\in\Set{0,i}}\zeta_{l}^{ks})+4(\sum_{s\notin\Set{0,i}}\zeta_{l}^{ks}) \\
&=-1-\zeta_{l}^{ki} \\
&=-\xi_{i}\xi'_{l}. 
\end{align*}

Finally we consider the case where $i=j=l$. 
If $s=0$, then
\begin{align*}
a(e_{l}, \sigma^{s}e_{l})=0. 
\end{align*}
If $s\neq 0$, then $e_{l}$ and $\sigma^{s}e_{l}$ do not share their vertices. 
Hence if $s\neq 0$, then  
\begin{align*}
a(e_{s}, \sigma^{s}e_{l})=4. 
\end{align*}
Therefore 
\begin{align*}
(D^{ll})^{(1)}\zz_{l,k}
&=0\cdot\zeta_{l}^{0}+4(\sum_{s=1}^{l-1}\zeta_{l}^{sk}) \\
&=-4 \\
&=-\xi_{l}\xi'_{l}-2. 
\end{align*}

Let $k$ be odd. 
The $l$th column of $\bar D_{k}$ is the zero vector by definition.  
On the other hand, since $k$ is odd, $\zeta_{n}^{lk}=-1$. 
Hence $\xi'_{l}=0$. 

Similarly to Lemma \ref{barCodd}, we can show that the rank of $\bar D_{k}-(-2)I_{l}$ is one.  
\end{proof}

\begin{Lem}\label{Lem6'}
Let $1 \leq k \leq 2n-1$ and $k$ be even. 
The eigenvalues of $\bar D_{k}$ are $-2$ and $-n$. 
The dimensions of the eigenspaces of $\bar D_{k}$ associate with $-2$ and $-n$ are $l-1$ and  $1$, respectively. 
\end{Lem}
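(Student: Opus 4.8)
The plan is to follow the proof of Lemma~\ref{Lem6} essentially verbatim, leaning on the rank-one description furnished by Lemma~\ref{barDeven}. That lemma tells us $\bar D_k+2I_l = (-\xi_i\xi_j')_{1\le i,j\le l}$ is a rank-one matrix, so its kernel is $(l-1)$-dimensional. Consequently $\bar D_k$ has $-2$ as an eigenvalue with an eigenspace of dimension exactly $l-1$, and there remains a single eigenvalue $\mu$ to pin down. The whole lemma thus reduces to showing $\mu=-n$; since $-n\ne -2$ (as $n=2l\ge 4$), the claimed multiplicities $l-1$ and $1$ then follow at once.

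To locate $\mu$ I would use the trace, exactly as in Lemma~\ref{Lem6}. Since the eigenvalues sum to $-2(l-1)+\mu$ and, by Lemma~\ref{barDeven}, the $(i,i)$-entry of $\bar D_k$ equals $-\xi_i\xi_i'-2$, I get $\mu=-\sum_{i=1}^l\xi_i\xi_i'-2$. Everything then comes down to evaluating $\sum_{i=1}^l\xi_i\xi_i'$ with $\xi_i=1+\zeta_n^{ik}$, $\xi_i'=1+\zeta_n^{-ik}$ for $i\le l-1$, and the corrected last term $\xi_l'=\tfrac12(1+\zeta_n^{-lk})$. Two observations drive the computation: first, for even $k$ one has $\zeta_n^{l}=-1$, hence $\zeta_n^{lk}=1$, so $\xi_l=2$, $\xi_l'=1$ and $\xi_l\xi_l'=2$; second, since $\bar D_k$ depends only on $k\bmod n$ and the relevant range gives $n\nmid k$, the identity $\sum_{i=0}^{n-1}\zeta_n^{ik}=0$ applies. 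Folding this sum about its middle index $i=l$ via $i\mapsto n-i$ yields $\sum_{i=1}^{l-1}(\zeta_n^{ik}+\zeta_n^{-ik})=-2$, whence $\sum_{i=1}^l\xi_i\xi_i'=2l-2$ and $\mu=-(2l-2)-2=-2l=-n$, as required.

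The step needing the most care is the root-of-unity bookkeeping in the second observation: one must isolate the central term $i=l$ (which contributes $1$ for even $k$) before pairing the indices $l+1\le i\le n-1$ with $1\le i\le l-1$ through $i\mapsto n-i$. The factor $\tfrac12$ attached to $\xi_l'$ must also be tracked honestly; it is precisely the correction coming from the fact that the $\sigma$-orbit of $e_l$ has length $l$ rather than $n$, and it is what makes $\xi_l\xi_l'$ contribute $2$ rather than $4$. Everything else is the same linear-algebra and character-sum manipulation already used for the odd case in Lemmas~\ref{barCodd}--\ref{Lem6}.
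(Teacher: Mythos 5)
Your proof is correct and follows the paper's argument exactly: the rank-one identity of Lemma~\ref{barDeven} yields the eigenvalue $-2$ with an $(l-1)$-dimensional eigenspace, and the trace computation (with $\xi_l\xi_l'=2$ accounting for the half-length orbit of $e_l$) pins the remaining eigenvalue at $-n$, which is precisely how the paper proceeds. Your parenthetical restriction to $n\nmid k$ is the right reading of the stated range $1\le k\le 2n-1$ (a notational carryover from Section~\ref{The eigenvectors and eigenvalues of block matrices}; the intended range is $1\le k\le n-1$), and the paper's own trace computation makes the same implicit assumption.
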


\begin{proof}
The trace of $\bar D_{k}$ is 
\begin{align*}
\sum_{i=1}^{l-1}-\xi_{i}\xi'_{i}-4
&=\sum_{i=1}^{l-1} (-4-\zeta_{n}^{ki}-\zeta_{n}^{-ki})-4 \\
&= -4(l-1)+2-4 \\
&= -4l+2 \\
&= -2n+2. 
\end{align*}
Therefore it follows from Lemmas \ref{big to small2} and \ref{barDeven} that  
the eigenvalues of $\bar D_{k}$ are $-2$ and $-n$, and 
the dimensions of the eigenspaces of $\bar D_{k}$ associate with $-2$ and $-n$ are $l-1$ and  $1$, respectively. 
\end{proof}

\begin{Lem}\label{Lem6''}
Let $1 \leq k \leq 2n-1$ and $k$ be odd. 
The eigenvalues of $\bar D_{k}$ are $-2$, $-n$ and $0$. 
The dimensions of the eigenspaces of $\bar D_{k}$ associate with $-2$, $-n$ and $0$ are $l-2$, $1$ and $1$, respectively. 
\end{Lem}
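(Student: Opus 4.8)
The plan is to exploit the fact that, for odd $k$, both the last row and the last column of $\bar D_{k}$ vanish; this peels off a spurious eigenvalue $0$ and reduces the problem to the same rank-one-plus-scalar shape already handled for even $k$ in Lemma~\ref{Lem6'}.

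First I would check the vanishing. Since $n=2l$ and $e_{l}=\Set{0,l}$, we have $\zeta_{n}^{lk}=(-1)^{k}=-1$ for odd $k$, hence $\xi_{l}=1+\zeta_{n}^{lk}=0$ and likewise $\xi'_{l}=0$. The $l$th column of $\bar D_{k}$ is zero by the definition of $\bar D_{k}$ for odd $k$, and the off-diagonal entries of the $l$th row are $d_{lj}^{(k)}=-\xi_{l}\xi'_{j}=0$ by Lemma~\ref{barDeven}; since the $(l,l)$ entry is $0$ as well, the entire $l$th row and $l$th column vanish. Consequently $\begin{pmatrix}\zero_{l-1}\\1\end{pmatrix}$ is an eigenvector belonging to $0$, and $\bar D_{k}$ decomposes as the direct sum of this $1\times1$ zero block and the principal submatrix $\bar D_{k}'=(d_{ij}^{(k)})_{1\leq i,j\leq l-1}$. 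This yields the eigenvalue $0$ of multiplicity $1$; it is precisely the spurious eigenvalue that, by Remark~\ref{remeigenvector}, fails to induce an eigenvector of $D$.

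It remains to find the spectrum of $\bar D_{k}'$. By Lemma~\ref{barDeven} restricted to the indices $1\leq i,j\leq l-1$ (where the entry formula is unambiguous), $\bar D_{k}'+2I_{l-1}=(-\xi_{i}\xi'_{j})_{1\leq i,j\leq l-1}=-\uu\vv^{t}$ with $\uu=(\xi_{1},\dots,\xi_{l-1})^{t}$ and $\vv=(\xi'_{1},\dots,\xi'_{l-1})^{t}$, a matrix of rank one. Its only eigenvalue that can be nonzero is $-\vv^{t}\uu=-\sum_{i=1}^{l-1}\xi_{i}\xi'_{i}$, with eigenvector $\uu$, the remaining $l-2$ eigenvalues being $0$. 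To evaluate the sum I would write $\xi_{i}\xi'_{i}=(1+\zeta_{n}^{ik})(1+\zeta_{n}^{-ik})=2+\zeta_{n}^{ik}+\zeta_{n}^{-ik}$ and use the cancellation
\begin{align*}
\sum_{i=1}^{l-1}(\zeta_{n}^{ik}+\zeta_{n}^{-ik})=\sum_{i\in\Set{1,\dots,n-1}\setminus\Set{l}}\zeta_{n}^{ik}=-1-\zeta_{n}^{lk}=0,
\end{align*}
which holds because $k$ odd forces $k\not\equiv 0\pmod n$ (as $n$ is even), so $\sum_{i=0}^{n-1}\zeta_{n}^{ik}=0$, and $\zeta_{n}^{lk}=-1$. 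Thus $\sum_{i=1}^{l-1}\xi_{i}\xi'_{i}=2(l-1)$, which is nonzero for $l\geq2$; in particular $\uu\neq\zero$, consistent with the rank being one. Hence the nonzero eigenvalue of $-\uu\vv^{t}$ is $-2(l-1)=2-n$, and $\bar D_{k}'=-2I_{l-1}+(-\uu\vv^{t})$ has eigenvalue $-2+(2-n)=-n$ with multiplicity $1$ and $-2$ with multiplicity $l-2$.

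Combining with the zero block gives the asserted spectrum: $-2$ with multiplicity $l-2$, $-n$ with multiplicity $1$, and $0$ with multiplicity $1$. The step that needs the most care—and the only genuine difference from the even case of Lemma~\ref{Lem6'}—is the vanishing of the entire $l$th row, which is what turns the rank-one perturbation of $-2I_{l}$ into a rank-one perturbation of $-2I_{l-1}$ on a hyperplane and thereby produces the extra eigenvalue $0$.
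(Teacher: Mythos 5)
Your proposal is correct and follows essentially the same route as the paper, whose proof is only a one-line sketch (``compute the trace of $\bar D_{k}$ and apply Lemma \ref{big to small3} and Remark \ref{remeigenvector}''): you isolate the vanishing $l$th row and column to account for the eigenvalue $0$, recognize the remaining $(l-1)\times(l-1)$ block as $-2I_{l-1}$ plus a rank-one matrix, and recover the eigenvalue $-n$ from the trace. Your explicit treatment of the $(l,l)$ entry (taken from the definition of $\bar D_{k}$ for odd $k$ rather than from the formula in Lemma \ref{barDeven}) is in fact slightly more careful than the paper's own statement, and the verification that $\sum_{i=1}^{l-1}\xi_{i}\xi_{i}'=n-2\neq 0$ correctly certifies that the rank-one part is nontrivial.
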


\begin{proof}
Similarly to Lemma \ref{Lem6'}, 
we compute the trace of $\bar D_{k}$ and apply Lemma \ref{big to small3} and Remark \ref{remeigenvector}. 
\end{proof}

Similarly, we obtain the consequence in the case where $k=0$. 

\begin{Lem}\label{Lem9'}
The eigenvalues of $\bar D_{0}$ are $-2$ and $2n(n-2)$. 
The dimensions of the eigenspaces of $\bar D_{0}$ associate with $-2$ and $2n(n-2)$ are $l-1$ and $1$, respectively. 
\end{Lem}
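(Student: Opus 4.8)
The plan is to compute the $l\times l$ reduction $\bar D_{0}$ entry by entry and then read its spectrum off an explicit rank-one decomposition, in parallel with the odd case $\bar C_{0}$ in Lemma \ref{Lem9} and with the even case $k\neq 0$ in Lemma \ref{barDeven}. The one feature that forces a fresh computation is that for $k=0$ the root-of-unity cancellation $\sum_{k'}\zeta_{n}^{kk'}=0$ underlying Lemma \ref{barDeven} no longer holds: these sums now equal $n$ (or $l$ for the short last orbit). Consequently the entries $d_{ij}^{(0)}=(D^{ij})^{(1)}\zz_{n,0}$ and $d_{il}^{(0)}=(D^{il})^{(1)}\zz_{l,0}$ must be evaluated directly as orbit sums, i.e.\ as the total $a$-weight of $e_{i}$ against all edges in the orbit of $e_{j}$.

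First I would carry out the incidence count. For $1\leq i,j\leq l-1$ with $i\neq j$ the four shifts in $\Set{0,i,-j,i-j}$ at which $\sigma^{k'}e_{j}$ meets $e_{i}$ are pairwise distinct modulo $n=2l$, so the orbit of $e_{j}$ contributes four edges of weight $3$ and $n-4$ of weight $4$, giving $d_{ij}^{(0)}=4(n-1)$; on the diagonal the self-edge removes one weight-$3$ pair, so $d_{ii}^{(0)}=4n-6$. For the short orbit of $e_{l}$, of length $l$ because $\sigma^{l}$ fixes $e_{l}$, a separate count of the ``diameter'' edges $\Set{k',k'+l}$ gives $d_{il}^{(0)}=2n-2$ for $i\leq l-1$, $d_{lj}^{(0)}=4n-4$, and $d_{ll}^{(0)}=2n-4$. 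Assembling these, adding $2I_{l}$ raises each diagonal entry to $4n-4$, so all $l$ rows of $\bar D_{0}+2I_{l}$ coincide with the single vector $(4n-4,\dots,4n-4,2n-2)$; hence $\bar D_{0}+2I_{l}$ has rank one, and its $(l-1)$-dimensional kernel is the $(-2)$-eigenspace of $\bar D_{0}$.

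To identify the last eigenvalue I would invoke the constant-row-sum (Perron) observation: since the orbits of $e_{1},\dots,e_{l-1}$ together with the short orbit of $e_{l}$ partition $E(K_{n})$, each row of $\bar D_{0}$ sums to $\sum_{f\in E(K_{n})}a(e_{i},f)=3\cdot 2(n-2)+4\bigl(\binom{n}{2}-2n+3\bigr)=2n(n-2)$. Thus $\zz_{l,0}$ is an eigenvector of $\bar D_{0}$ for the eigenvalue $2n(n-2)$; equivalently, the unique nonzero eigenvalue of the rank-one matrix $\bar D_{0}+2I_{l}$ is its common row sum $(l-1)(4n-4)+(2n-2)=2n(n-2)+2$, so $\bar D_{0}$ carries the eigenvalue $2n(n-2)$ with multiplicity one. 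Since $(l-1)+1=l$ this exhausts the spectrum, and lifting through Lemma \ref{big to small2} (the even-$k$ case, here $k=0$) delivers the stated eigenvalues and multiplicities. The main obstacle is bookkeeping the short orbit $j=l$: its length $l$, the use of $\zz_{l,0}$ rather than $\zz_{n,0}$, and the distinct incidence pattern of $e_{i}$ with the diameter edges all have to be handled separately from the generic orbits, after which the rank-one collapse is immediate and the rest is routine.
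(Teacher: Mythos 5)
Your proof is correct and fills in exactly the computation the paper omits (the paper's ``proof'' of this lemma is the single word ``Similarly''): evaluate the entries of $\bar D_{0}$ as orbit sums, observe that $\bar D_{0}+2I_{l}$ is rank one with all rows equal, and extract the nonzero eigenvalue $2n(n-2)+2$ from the common row sum (equivalently the trace), which matches the rank-one-plus-trace strategy of Lemmas \ref{barDeven} and \ref{Lem6'}. All the entry values check out ($4n-4$, $4n-6$, $2n-2$, $4n-4$, $2n-4$); the only slip is the phrase ``raises each diagonal entry to $4n-4$'', since the $(l,l)$ entry becomes $2n-2$, but your stated row vector $(4n-4,\dots,4n-4,2n-2)$ is the correct one and the argument is unaffected.
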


We obtain the following result by Theorem \ref{Dcharacteristic}, and Lemmas \ref{Lem6'}, \ref{Lem6''} and \ref{Lem9'}.  

\begin{Lem}\label{Deigenvalues}
The eigenvalues of $D$ are $-2$, $-n$ and $2n(n-2)$. 
The dimensions of the eigenspaces of $D$ associate with $-2$, $-n$ and $2n(n-2)$ are $\binom{n}{2}-n$, $n-1$ and $1$, respectively. 
\end{Lem}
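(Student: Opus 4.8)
The plan is to build $\chi_D(t)$ out of the characteristic polynomials of the small $l \times l$ matrices $\bar D_k$ by means of Theorem~\ref{Dcharacteristic}, and then to read off the three eigenvalues and their multiplicities by a direct count over the index $k$. Writing $n = 2l$, the diagonal blocks $D^{ij}$ with $1 \le i,j \le l-1$ have size $2l$ and the corner block $D^{ll}$ has size $l$, so Theorem~\ref{Dcharacteristic} applies with its ``$2n$'' and ``$n$'' specialized to $2l$ and $l$, giving
\begin{align*}
\chi_D(t) = \frac{1}{t^{l}} \prod_{k=0}^{2l-1} \chi_{\bar D_k}(t).
\end{align*}

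First I would partition the index set $\Set{0, 1, \ldots, 2l-1}$ into the single value $k=0$, the $l-1$ nonzero even values $k \in \Set{2, 4, \ldots, 2l-2}$, and the $l$ odd values $k \in \Set{1, 3, \ldots, 2l-1}$. For each class the eigenvalue data is already available: Lemma~\ref{Lem9'} gives that $\bar D_0$ has eigenvalue $-2$ with multiplicity $l-1$ and $2n(n-2)$ with multiplicity $1$; Lemma~\ref{Lem6'} gives that each even $\bar D_k$ with $k \ge 2$ has $-2$ with multiplicity $l-1$ and $-n$ with multiplicity $1$; and Lemma~\ref{Lem6''} gives that each odd $\bar D_k$ has $-2$ with multiplicity $l-2$, $-n$ with multiplicity $1$, and $0$ with multiplicity $1$.

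Next I would sum these multiplicities across all $k$. The eigenvalue $-2$ accrues $(l-1) + (l-1)^2 + l(l-2) = 2l^2 - 3l$; the eigenvalue $-n$ accrues $(l-1) + l = 2l-1$; the eigenvalue $2n(n-2)$ accrues $1$; and $0$ accrues exactly $l$, one from each odd block. The prefactor $t^{-l}$ in Theorem~\ref{Dcharacteristic} then cancels precisely these $l$ zero eigenvalues, which is the expected manifestation of Remark~\ref{remeigenvector}: for odd $k$ the eigenvector of $\bar D_k$ singled out there (with last entry $1$ and all others $0$), belonging to the eigenvalue $0$, fails to induce an eigenvector of $D$. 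Rewriting through $n = 2l$, the surviving multiplicities are $2l^2 - 3l = \binom{n}{2} - n$ for $-2$, $\; 2l - 1 = n-1$ for $-n$, and $1$ for $2n(n-2)$, exactly as claimed.

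As a final consistency check I would verify $\left(\binom{n}{2} - n\right) + (n-1) + 1 = \binom{n}{2}$, which equals the size $(l-1)\cdot 2l + l$ of $D$; this confirms that every eigenvalue has been accounted for. The argument contains no genuinely hard step once Lemmas~\ref{Lem6'}, \ref{Lem6''}, and~\ref{Lem9'} are granted; the only place requiring care is the bookkeeping of the three index classes together with the exact cancellation of the $l$ spurious zeros by $t^{-l}$, and the dimension count above is precisely the safeguard against an off-by-one in the count of odd $k$ (or a misapplication of Theorem~\ref{Dcharacteristic} with the wrong corner-block size).
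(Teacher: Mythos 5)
Your proposal is correct and follows exactly the route the paper intends: Lemma \ref{Deigenvalues} is stated there as an immediate consequence of Theorem \ref{Dcharacteristic} together with Lemmas \ref{Lem6'}, \ref{Lem6''} and \ref{Lem9'}, and your contribution is simply to make the multiplicity bookkeeping (and the cancellation of the $l$ spurious zeros by the factor $t^{-l}$) explicit, which checks out. The only caveat worth noting is that you correctly read the index range for $k$ as $0\leq k\leq 2l-1$ despite the paper's notational slip in writing $0\leq k\leq 2n-1$ in Section \ref{Main results}.
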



On combining Lemma \ref{Ceigenvalues} with Lemma \ref{Deigenvalues}, we obtain Theorem \ref{keyprop}. 

\subsection{The Hessian of the complete bipartite graph}
Here we compute the Hessians of the complete bipartite graphs (Theorem \ref{The Hessian of the complete bipartite graph}).

For a graph $\Gamma$, 
we define the \emph{degree matrix} $D_{\Gamma}$ to be 
a diagonal matrix indexed by vertices of $\Gamma$ whose entries are degrees of vertices. 
We also define the \emph{adjacency matrix} $A_{\Gamma}$ to be a matrix indexed by vertices of $\Gamma$ 
whose $(i,j)$-entry is the number of edges with the ends $v_{i}$ and $v_{j}$. 
Note that the entries in $A_{\Gamma}$ are one or zero for a simple graph $\Gamma$. 
We define  the \emph{Laplacian matrix} $L_{\Gamma}$ by $L_{\Gamma}=D_{\Gamma}-A_{\Gamma}$.  
For an arbitrary graph, 
we have the theorem which is the number of spanning trees in the graph. 
See \cite[Theorem 6.3]{MR1271140} for the detail.  

\begin{Theorem}[The matrix-tree theorem]\label{MTT}
Every cofactor of $L_{\Gamma}$ is equal to the number of spanning trees in $\Gamma$. 
\end{Theorem}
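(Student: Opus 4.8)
The plan is to prove the statement in two movements: first that all cofactors of $L_{\Gamma}$ coincide, and then that this common value equals the number of spanning trees, via the Cauchy--Binet formula applied to an incidence-matrix factorization of $L_{\Gamma}$.

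For the first movement, I would exploit the defining feature of the Laplacian that every row and every column sums to zero: in the row indexed by a vertex $v$ the diagonal entry is $\deg(v)$, while the off-diagonal entries are the negatives of the adjacency entries, whose sum is again $\deg(v)$, so the row sums to zero, and by symmetry of $L_{\Gamma}$ the columns do as well. Consequently the all-ones vector $(1,\ldots,1)^{t}$ lies in both the kernel of $L_{\Gamma}$ and the kernel of its transpose. A standard linear-algebra argument then shows that for a matrix annihilating $(1,\ldots,1)^{t}$ on both sides all the signed cofactors are equal: one adds every other row (and then every other column) to a chosen one, which transforms the matrix obtained by deleting row $i$ and column $j$ into something independent of the choice of $i$ and $j$. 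This reduces the problem to computing a single cofactor, say the one obtained by deleting the last row and column.

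For the second movement, I would orient each edge of $\Gamma$ arbitrarily and form the oriented incidence matrix $B$, indexed by vertices and edges, with $B_{v,e}=+1$ if $v$ is the head of $e$, $B_{v,e}=-1$ if $v$ is the tail, and $0$ otherwise. A direct computation gives $L_{\Gamma}=BB^{t}$, and deleting the last row of $B$ to obtain the reduced matrix $B_{0}$ expresses the chosen cofactor as $\det(B_{0}B_{0}^{t})$. Applying the Cauchy--Binet formula then expands this determinant as $\sum_{S}\bigl(\det B_{0}[S]\bigr)^{2}$, where $S$ ranges over the sets of $\lvert V(\Gamma)\rvert-1$ edges and $B_{0}[S]$ denotes the corresponding square submatrix.

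The crux---and the step I expect to be the main obstacle---is the combinatorial lemma that $\det B_{0}[S]=\pm 1$ when the edges of $S$ form a spanning tree and $\det B_{0}[S]=0$ otherwise. I would argue by induction. If $S$ contains a cycle, then assigning $\pm 1$ coefficients to its edges according to a fixed traversal produces a vanishing combination of the corresponding columns (each vertex on the cycle meets exactly two cycle-edges, contributing $+1$ and $-1$), and this dependency survives the deletion of a row, so the determinant vanishes. If $S$ is acyclic with $\lvert V(\Gamma)\rvert-1$ edges it is a spanning tree; such a tree has a leaf distinct from the deleted vertex, whose row in $B_{0}[S]$ contains a single $\pm 1$, and expanding the determinant along that row reduces it to the reduced incidence matrix of the smaller tree obtained by deleting the leaf and its edge, with the base case a $1\times 1$ block of entry $\pm 1$. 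Summing the Cauchy--Binet terms then counts precisely the spanning trees, each contributing $(\pm 1)^{2}=1$, which completes the argument.
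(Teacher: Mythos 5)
Your argument is correct, but note that the paper does not prove this statement at all: it is quoted as a classical result with a pointer to \cite[Theorem~6.3]{MR1271140}, so there is no internal proof to compare against. What you give is the standard textbook proof, and both movements are sound. The first movement (all signed cofactors coincide because the all-ones vector annihilates $L_{\Gamma}$ on both sides) is most cleanly finished via the adjugate: $L_{\Gamma}\operatorname{adj}(L_{\Gamma})=\det(L_{\Gamma})I=0$ and $\operatorname{adj}(L_{\Gamma})L_{\Gamma}=0$ force every column and every row of $\operatorname{adj}(L_{\Gamma})$ to be a multiple of $(1,\ldots,1)^{t}$ when $\operatorname{rank}L_{\Gamma}=|V|-1$, hence $\operatorname{adj}(L_{\Gamma})$ is a constant matrix (and if the rank is smaller, all cofactors vanish and there are no spanning trees); your row-and-column-operation sketch can be made to work but is the one place where the signs $(-1)^{i+j}$ need care. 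The second movement, $L_{\Gamma}=BB^{t}$, Cauchy--Binet, and the lemma that $\det B_{0}[S]=\pm1$ exactly when $S$ is a spanning tree, is complete as outlined; the leaf-peeling induction and the cycle-dependency argument are the right ingredients. One small remark relevant to this paper: the theorem is applied to contractions such as $K_{m,n}/i,j$, which are multigraphs, and your proof handles parallel edges without change (they are simply distinct columns of $B$), though loops would have to be discarded beforehand since they contribute zero columns.
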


For a graph $\Gamma$ and an edge $e$ with ends $v, v'$ of $\Gamma$, 
we define the \emph{contraction} $\Gamma/e$ to be 
the graph obtained by removing the edge $e$ from $\Gamma$ and by putting $v$ in $v'$. 
Let $e, e'$ be edges of $\Gamma$. 
Then $(\Gamma/e)/e'=(\Gamma/e')/e$. 
We write $\Gamma/e,e'$ to denote $(\Gamma/e)/e'$. 

Let $e$ be an edge of a graph $\Gamma$. 
If we apply the matrix-tree theorem to the graph $\Gamma/e$, 
then we obtain the number of spanning trees in $\Gamma$ including the edge $e$.  

Let $X=\Set{0',1',\ldots, (m-1)'}$ and $Y=\Set{0,1,\ldots, n-1}$. 
We consider the complete bipartite graph $K_{m, n}=K_{X, Y}$. 
Theorems \ref{blockdeterminant} and \ref{MTT} implies the following.  

\begin{Lem}\label{the number of spanning trees in the complete bipartite graph including some edges}
Let $h_{ij}$ be the number of spanning trees containing the edges $i, j, i\neq j$ of $K_{m,n}$. 
Then
\begin{align*}
h_{ij}=
\begin{cases}
n^{m-3} m^{n-3} n(2m+n-2) & \text{if $i\cap j\in X$},  \\
n^{m-3} m^{n-3} m(2n+m-2) & \text{if $i\cap j\in Y$},  \\
n^{m-3} m^{n-3} (m+n)(m+n-2) & \text{otherwise}. 
\end{cases}
\end{align*}
\end{Lem}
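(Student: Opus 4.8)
The plan is to count, for two distinct edges $i, j$ of $K_{m,n}$, the spanning trees containing both by contracting the two edges and applying the matrix-tree theorem (Theorem \ref{MTT}) to the contracted multigraph $K_{m,n}/i,j$; the resulting Laplacian cofactor will, in each case, turn out to be the determinant of a matrix of the form $M(A, \lambda, \dd)$, so that Theorem \ref{blockdeterminant} reduces the computation to a $2\times 2$ or $3\times 3$ determinant. Since every edge of $K_{m,n}$ joins one vertex of $X$ to one vertex of $Y$, two distinct edges either share a vertex in $X$, share a vertex in $Y$, or are vertex-disjoint, and these are exactly the three cases in the statement.

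First I would treat the case $i\cap j\in X$. Writing $i=\Set{x,y}$ and $j=\Set{x,y'}$ with $y\neq y'$, contracting both edges merges the three vertices $x,y,y'$ into a single vertex $u$, leaving $m-1$ surviving vertices of $X$ and $n-2$ surviving vertices of $Y$. Each surviving vertex of $X$ retains degree $n$ and each surviving vertex of $Y$ retains degree $m$, while every parallel edge created by the contraction is incident to $u$. Hence deleting the row and column of $u$ from the Laplacian (a permissible choice of cofactor by Theorem \ref{MTT}) leaves exactly
\begin{align*}
\begin{pmatrix}
n I_{m-1} & -J_{(m-1)(n-2)} \\
-J_{(n-2)(m-1)} & m I_{n-2}
\end{pmatrix}
= M(A, \lambda, \dd),
\end{align*}
with $A$ the $2\times 2$ matrix having zero diagonal and off-diagonal entries $-1$, $\lambda=(n,m)$, and $\dd=(m-1,n-2)$. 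By Theorem \ref{blockdeterminant} the number of spanning trees equals $n^{m-2}m^{n-3}\det\bar M(A,\lambda,\dd)$, and a short computation gives $\det\bar M(A,\lambda,\dd)=nm-(m-1)(n-2)=2m+n-2$, matching the claimed value $n^{m-3}m^{n-3}n(2m+n-2)$. The case $i\cap j\in Y$ follows by interchanging the roles of $X$ and $Y$, i.e. by swapping $m$ and $n$.

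For the disjoint case I would merge $x,y$ into $u$ and $x',y'$ into $u'$, producing a multigraph on the $m+n-2$ vertices $u,u',X',Y'$, where $X',Y'$ denote the $m-2$ and $n-2$ surviving original vertices. Here the merged vertices acquire degree $m+n-2$ and are joined to each other by two parallel edges, while all other surviving degrees are unchanged. Deleting the row and column of $u$ leaves a Laplacian submatrix that, ordered as $(u',X',Y')$, equals $M(A,\lambda,\dd)$ with $\dd=(1,m-2,n-2)$, $\lambda=(m+n-2,n,m)$, and $A$ the $3\times 3$ matrix with zero diagonal and all off-diagonal entries $-1$. Theorem \ref{blockdeterminant} then gives the count as $n^{m-3}m^{n-3}\det\bar M(A,\lambda,\dd)$, and expanding the $3\times 3$ determinant of $\bar M(A,\lambda,\dd)$ yields $(m+n)(m+n-2)$, as claimed.

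The only delicate points are the bookkeeping: one must verify that contraction preserves the degrees of the surviving original vertices and that every parallel edge it creates is incident to a vertex that is subsequently deleted when forming the cofactor, so that the surviving submatrix is genuinely of the clean block form $M(A,\lambda,\dd)$. Once the correct vertex is deleted — the merged vertex in the two sharing cases, and one of the two merged vertices (leaving the other as the size-one block) in the disjoint case — the identification with $M(A,\lambda,\dd)$ is immediate, and the remaining determinant evaluations are routine.
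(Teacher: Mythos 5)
Your proof is correct and follows essentially the same route as the paper: contract the two edges, apply the matrix-tree theorem to $K_{m,n}/i,j$, recognize the relevant Laplacian cofactor as a matrix of the form $M(A,\lambda,\dd)$, and evaluate it with Theorem \ref{blockdeterminant} (the paper keeps the merged vertex as a size-one block with $\lambda=0$ and takes the $(1,1)$ cofactor, whereas you delete it first — an immaterial difference). All three case computations check out against the stated values.
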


\begin{proof}
The Laplacian matrix $L_{K_{m,n}}$ is 
\begin{align*}
M
\left(
\begin{pmatrix}
0&-1 \\
-1&0
\end{pmatrix}, 
(n,m), 
(m,n)
\right), 
\end{align*}
where $M$ is as in Section \ref{The eigenvectors and eigenvalues of block matrices}. 
The number of spanning trees including edges $i, j$ is every cofactor of the graph obtained by contraction edges $i, j$ of $K_{m,n}$. 
If $i\cap j \in X$, 
then the Laplacian matrix $L_{K_{m,n}/i, j}$ is 
\begin{align*}
M
\left(
A,
(0,n,m), 
(1,m-1,n-2)
\right),  
\end{align*}
where 
\begin{align*}
A=
\begin{pmatrix}
2m+n-4&-2&-1 \\
-2&0&-1 \\
-1&-1&0
\end{pmatrix}. 
\end{align*}
It follows from Proposition \ref{blockdeterminant} that the $(1,1)$ cofactor of  $L_{K_{m,n}/i,j}$ is 
\begin{align*}
(2m+n-2)n^{m-2}m^{n-3}=m^{n-3}n^{m-3}m(2m+n-2).
\end{align*}
Similarly if $i\cap j \in Y$, then the $(1,1)$ cofactor of $L_{K_{m,n}/i,j}$ is
\begin{align*}
(2m+n-2)n^{m-3}m^{n-2}=n^{m-3}m^{n-3}m(2n+m-2). 
\end{align*}
Finally we consider the case of $i\cap j=\emptyset$. 
Then the Laplacian matrix $L_{K_{m,n}/i,j}$ is 
\begin{align*}
M
\left(
A', 
(0,0,n,m), 
(1,1,m-2,n-2)
\right), 
\end{align*}
where
\begin{align*}
A'=
\begin{pmatrix}
m+n-2&-2&-1&-1 \\
-2&m+n-2&-1&-1 \\
-1&-1&0&-1 \\
-1&-1&-1&0 
\end{pmatrix}. 
\end{align*}
By Proposition \ref{blockdeterminant}, the $(1,1)$ cofactor of $L_{K_{m,n}/i, j}$ is 
\begin{align*}
m^{n-3}n^{m-3}(m+n)(m+n-2). 
\end{align*}
\end{proof}

\begin{Example}
Consider  the complete bipartite graph $K_{2, 3}$, $(m=2, n=3)$.  
The following are the subgraphs of $K_{2, 3}$ consisting four edges including the edges $\Set{0', 0}$ and $\Set{0', 1}$:

\vspace{1ex}
\setlength{\unitlength}{0.8pt}
\begin{picture}(65, 50)(-10, 0)
\put(5,15){\makebox(0,0){$\bullet$}}
\put(5,35){\makebox(0,0){$\bullet$}}
\put(45,5){\makebox(0,0){$\circ$}}
\put(45,25){\makebox(0,0){$\circ$}}
\put(45,45){\makebox(0,0){$\circ$}}
\put(0,15){\makebox(0,0)[r]{\tiny$1'$}}
\put(0,35){\makebox(0,0)[r]{\tiny$0'$}}
\put(50,5){\makebox(0,0)[l]{\tiny$2$}}
\put(50,25){\makebox(0,0)[l]{\tiny$1$}}
\put(50,45){\makebox(0,0)[l]{\tiny$0$}}
%
\put(5, 15){\line(4, 3){40}}
\put(5, 35){\line(4, -3){40}}
\thicklines
\put(5, 35){\line(4, -1){40}}
\put(5, 35){\line(4, 1){40}}
\end{picture}
\begin{picture}(65, 50)(-10, 0)
\put(5,15){\makebox(0,0){$\bullet$}}
\put(5,35){\makebox(0,0){$\bullet$}}
\put(45,5){\makebox(0,0){$\circ$}}
\put(45,25){\makebox(0,0){$\circ$}}
\put(45,45){\makebox(0,0){$\circ$}}
\put(0,15){\makebox(0,0)[r]{\tiny$1'$}}
\put(0,35){\makebox(0,0)[r]{\tiny$0'$}}
\put(50,5){\makebox(0,0)[l]{\tiny$2$}}
\put(50,25){\makebox(0,0)[l]{\tiny$1$}}
\put(50,45){\makebox(0,0)[l]{\tiny$0$}}
%
\put(5, 15){\line(4, 1){40}}
%
\put(5, 35){\line(4, -3){40}}
\thicklines
\put(5, 35){\line(4, -1){40}}
\put(5, 35){\line(4, 1){40}}
\end{picture}
\begin{picture}(65, 50)(-10, 0)
\put(5,15){\makebox(0,0){$\bullet$}}
\put(5,35){\makebox(0,0){$\bullet$}}
\put(45,5){\makebox(0,0){$\circ$}}
\put(45,25){\makebox(0,0){$\circ$}}
\put(45,45){\makebox(0,0){$\circ$}}
\put(0,15){\makebox(0,0)[r]{\tiny$1'$}}
\put(0,35){\makebox(0,0)[r]{\tiny$0'$}}
\put(50,5){\makebox(0,0)[l]{\tiny$2$}}
\put(50,25){\makebox(0,0)[l]{\tiny$1$}}
\put(50,45){\makebox(0,0)[l]{\tiny$0$}}
\put(5, 15){\line(4, -1){40}}
%
\put(5, 35){\line(4, -3){40}}
\thicklines
\put(5, 35){\line(4, -1){40}}
\put(5, 35){\line(4, 1){40}}
\end{picture}
\begin{picture}(65, 50)(-10, 0)
\put(5,15){\makebox(0,0){$\bullet$}}
\put(5,35){\makebox(0,0){$\bullet$}}
\put(45,5){\makebox(0,0){$\circ$}}
\put(45,25){\makebox(0,0){$\circ$}}
\put(45,45){\makebox(0,0){$\circ$}}
\put(0,15){\makebox(0,0)[r]{\tiny$1'$}}
\put(0,35){\makebox(0,0)[r]{\tiny$0'$}}
\put(50,5){\makebox(0,0)[l]{\tiny$2$}}
\put(50,25){\makebox(0,0)[l]{\tiny$1$}}
\put(50,45){\makebox(0,0)[l]{\tiny$0$}}
%
\put(5, 15){\line(4, 1){40}}
\put(5, 15){\line(4, 3){40}}
%
\thicklines
\put(5, 35){\line(4, -1){40}}
\put(5, 35){\line(4, 1){40}}
\end{picture}
\begin{picture}(65, 50)(-10, 0)
\put(5,15){\makebox(0,0){$\bullet$}}
\put(5,35){\makebox(0,0){$\bullet$}}
\put(45,5){\makebox(0,0){$\circ$}}
\put(45,25){\makebox(0,0){$\circ$}}
\put(45,45){\makebox(0,0){$\circ$}}
\put(0,15){\makebox(0,0)[r]{\tiny$1'$}}
\put(0,35){\makebox(0,0)[r]{\tiny$0'$}}
\put(50,5){\makebox(0,0)[l]{\tiny$2$}}
\put(50,25){\makebox(0,0)[l]{\tiny$1$}}
\put(50,45){\makebox(0,0)[l]{\tiny$0$}}
\put(5, 15){\line(4, -1){40}}
\put(5, 15){\line(4, 3){40}}
%
\thicklines
\put(5, 35){\line(4, -1){40}}
\put(5, 35){\line(4, 1){40}}
\end{picture}
\begin{picture}(65, 50)(-10, 0)
\put(5,15){\makebox(0,0){$\bullet$}}
\put(5,35){\makebox(0,0){$\bullet$}}
\put(45,5){\makebox(0,0){$\circ$}}
\put(45,25){\makebox(0,0){$\circ$}}
\put(45,45){\makebox(0,0){$\circ$}}
\put(0,15){\makebox(0,0)[r]{\tiny$1'$}}
\put(0,35){\makebox(0,0)[r]{\tiny$0'$}}
\put(50,5){\makebox(0,0)[l]{\tiny$2$}}
\put(50,25){\makebox(0,0)[l]{\tiny$1$}}
\put(50,45){\makebox(0,0)[l]{\tiny$0$}}
\put(5, 15){\line(4, -1){40}}
\put(5, 15){\line(4, 1){40}}
%
\thicklines
\put(5, 35){\line(4, -1){40}}
\put(5, 35){\line(4, 1){40}}
\end{picture}

Hence the number of spanning tree containing the edges $\Set{0', 0}$ and $\Set{0', 1}$ are five.  

The following are the subgraphs of $K_{2, 3}$ consisting four edges including the edges $\Set{0', 0}$ and $\Set{1', 0}$:

\vspace{1ex}
\begin{picture}(65, 50)(-10, 0)
\put(5,15){\makebox(0,0){$\bullet$}}
\put(5,35){\makebox(0,0){$\bullet$}}
\put(45,5){\makebox(0,0){$\circ$}}
\put(45,25){\makebox(0,0){$\circ$}}
\put(45,45){\makebox(0,0){$\circ$}}
\put(0,15){\makebox(0,0)[r]{\tiny$1'$}}
\put(0,35){\makebox(0,0)[r]{\tiny$0'$}}
\put(50,5){\makebox(0,0)[l]{\tiny$2$}}
\put(50,25){\makebox(0,0)[l]{\tiny$1$}}
\put(50,45){\makebox(0,0)[l]{\tiny$0$}}
%
\thicklines
\put(5, 15){\line(4, 3){40}}
\thinlines
\put(5, 35){\line(4, -3){40}}
\put(5, 35){\line(4, -1){40}}
\thicklines
\put(5, 35){\line(4, 1){40}}
\end{picture}
\begin{picture}(65, 50)(-10, 0)
\put(5,15){\makebox(0,0){$\bullet$}}
\put(5,35){\makebox(0,0){$\bullet$}}
\put(45,5){\makebox(0,0){$\circ$}}
\put(45,25){\makebox(0,0){$\circ$}}
\put(45,45){\makebox(0,0){$\circ$}}
\put(0,15){\makebox(0,0)[r]{\tiny$1'$}}
\put(0,35){\makebox(0,0)[r]{\tiny$0'$}}
\put(50,5){\makebox(0,0)[l]{\tiny$2$}}
\put(50,25){\makebox(0,0)[l]{\tiny$1$}}
\put(50,45){\makebox(0,0)[l]{\tiny$0$}}
%
\put(5, 15){\line(4, 1){40}}
\thicklines
\put(5, 15){\line(4, 3){40}}
\thinlines
\put(5, 35){\line(4, -1){40}}
\thicklines
\put(5, 35){\line(4, 1){40}}
\end{picture}
\begin{picture}(65, 50)(-10, 0)
\put(5,15){\makebox(0,0){$\bullet$}}
\put(5,35){\makebox(0,0){$\bullet$}}
\put(45,5){\makebox(0,0){$\circ$}}
\put(45,25){\makebox(0,0){$\circ$}}
\put(45,45){\makebox(0,0){$\circ$}}
\put(0,15){\makebox(0,0)[r]{\tiny$1'$}}
\put(0,35){\makebox(0,0)[r]{\tiny$0'$}}
\put(50,5){\makebox(0,0)[l]{\tiny$2$}}
\put(50,25){\makebox(0,0)[l]{\tiny$1$}}
\put(50,45){\makebox(0,0)[l]{\tiny$0$}}
\put(5, 15){\line(4, -1){40}}
\thicklines
\put(5, 15){\line(4, 3){40}}
\thinlines
\put(5, 35){\line(4, -1){40}}
\thicklines
\put(5, 35){\line(4, 1){40}}
\end{picture}
\begin{picture}(65, 50)(-10, 0)
\put(5,15){\makebox(0,0){$\bullet$}}
\put(5,35){\makebox(0,0){$\bullet$}}
\put(45,5){\makebox(0,0){$\circ$}}
\put(45,25){\makebox(0,0){$\circ$}}
\put(45,45){\makebox(0,0){$\circ$}}
\put(0,15){\makebox(0,0)[r]{\tiny$1'$}}
\put(0,35){\makebox(0,0)[r]{\tiny$0'$}}
\put(50,5){\makebox(0,0)[l]{\tiny$2$}}
\put(50,25){\makebox(0,0)[l]{\tiny$1$}}
\put(50,45){\makebox(0,0)[l]{\tiny$0$}}
%
\put(5, 15){\line(4, 1){40}}
\thicklines
\put(5, 15){\line(4, 3){40}}
\thinlines
\put(5, 35){\line(4, -3){40}}
\thicklines
\put(5, 35){\line(4, 1){40}}
\end{picture}
\begin{picture}(65, 50)(-10, 0)
\put(5,15){\makebox(0,0){$\bullet$}}
\put(5,35){\makebox(0,0){$\bullet$}}
\put(45,5){\makebox(0,0){$\circ$}}
\put(45,25){\makebox(0,0){$\circ$}}
\put(45,45){\makebox(0,0){$\circ$}}
\put(0,15){\makebox(0,0)[r]{\tiny$1'$}}
\put(0,35){\makebox(0,0)[r]{\tiny$0'$}}
\put(50,5){\makebox(0,0)[l]{\tiny$2$}}
\put(50,25){\makebox(0,0)[l]{\tiny$1$}}
\put(50,45){\makebox(0,0)[l]{\tiny$0$}}
\put(5, 15){\line(4, -1){40}}
\thicklines
\put(5, 15){\line(4, 3){40}}
\thinlines
\put(5, 35){\line(4, -3){40}}
\thicklines
\put(5, 35){\line(4, 1){40}}
\end{picture}
\begin{picture}(65, 50)(-10, 0)
\put(5,15){\makebox(0,0){$\bullet$}}
\put(5,35){\makebox(0,0){$\bullet$}}
\put(45,5){\makebox(0,0){$\circ$}}
\put(45,25){\makebox(0,0){$\circ$}}
\put(45,45){\makebox(0,0){$\circ$}}
\put(0,15){\makebox(0,0)[r]{\tiny$1'$}}
\put(0,35){\makebox(0,0)[r]{\tiny$0'$}}
\put(50,5){\makebox(0,0)[l]{\tiny$2$}}
\put(50,25){\makebox(0,0)[l]{\tiny$1$}}
\put(50,45){\makebox(0,0)[l]{\tiny$0$}}
\put(5, 15){\line(4, -1){40}}
\put(5, 15){\line(4, 1){40}}
\thicklines
\put(5, 15){\line(4, 3){40}}
\thinlines
\thicklines
\put(5, 35){\line(4, 1){40}}
\end{picture}

Hence the number of spanning tree containing the edges $\Set{0', 0}$ and $\Set{1', 0}$ are four.  

The following are the subgraphs of $K_{2, 3}$ consisting four edges including the edges $\Set{0', 0}$ and $\Set{1', 1}$:

\vspace{1ex}
\begin{picture}(65, 50)(-10, 0)
\put(5,15){\makebox(0,0){$\bullet$}}
\put(5,35){\makebox(0,0){$\bullet$}}
\put(45,5){\makebox(0,0){$\circ$}}
\put(45,25){\makebox(0,0){$\circ$}}
\put(45,45){\makebox(0,0){$\circ$}}
\put(0,15){\makebox(0,0)[r]{\tiny$1'$}}
\put(0,35){\makebox(0,0)[r]{\tiny$0'$}}
\put(50,5){\makebox(0,0)[l]{\tiny$2$}}
\put(50,25){\makebox(0,0)[l]{\tiny$1$}}
\put(50,45){\makebox(0,0)[l]{\tiny$0$}}
%
\thicklines
\put(5, 15){\line(4, 1){40}}
\thinlines
%
\put(5, 35){\line(4, -3){40}}
\put(5, 35){\line(4, -1){40}}
\thicklines
\put(5, 35){\line(4, 1){40}}
\end{picture}
\begin{picture}(65, 50)(-10, 0)
\put(5,15){\makebox(0,0){$\bullet$}}
\put(5,35){\makebox(0,0){$\bullet$}}
\put(45,5){\makebox(0,0){$\circ$}}
\put(45,25){\makebox(0,0){$\circ$}}
\put(45,45){\makebox(0,0){$\circ$}}
\put(0,15){\makebox(0,0)[r]{\tiny$1'$}}
\put(0,35){\makebox(0,0)[r]{\tiny$0'$}}
\put(50,5){\makebox(0,0)[l]{\tiny$2$}}
\put(50,25){\makebox(0,0)[l]{\tiny$1$}}
\put(50,45){\makebox(0,0)[l]{\tiny$0$}}
%
\thicklines
\put(5, 15){\line(4, 1){40}}
\thinlines
\put(5, 15){\line(4, 3){40}}
%
\put(5, 35){\line(4, -1){40}}
\thicklines
\put(5, 35){\line(4, 1){40}}
\end{picture}
\begin{picture}(65, 50)(-10, 0)
\put(5,15){\makebox(0,0){$\bullet$}}
\put(5,35){\makebox(0,0){$\bullet$}}
\put(45,5){\makebox(0,0){$\circ$}}
\put(45,25){\makebox(0,0){$\circ$}}
\put(45,45){\makebox(0,0){$\circ$}}
\put(0,15){\makebox(0,0)[r]{\tiny$1'$}}
\put(0,35){\makebox(0,0)[r]{\tiny$0'$}}
\put(50,5){\makebox(0,0)[l]{\tiny$2$}}
\put(50,25){\makebox(0,0)[l]{\tiny$1$}}
\put(50,45){\makebox(0,0)[l]{\tiny$0$}}
\put(5, 15){\line(4, -1){40}}
\thicklines
\put(5, 15){\line(4, 1){40}}
\thinlines
%
\put(5, 35){\line(4, -1){40}}
\thicklines
\put(5, 35){\line(4, 1){40}}
\end{picture}
\begin{picture}(65, 50)(-10, 0)
\put(5,15){\makebox(0,0){$\bullet$}}
\put(5,35){\makebox(0,0){$\bullet$}}
\put(45,5){\makebox(0,0){$\circ$}}
\put(45,25){\makebox(0,0){$\circ$}}
\put(45,45){\makebox(0,0){$\circ$}}
\put(0,15){\makebox(0,0)[r]{\tiny$1'$}}
\put(0,35){\makebox(0,0)[r]{\tiny$0'$}}
\put(50,5){\makebox(0,0)[l]{\tiny$2$}}
\put(50,25){\makebox(0,0)[l]{\tiny$1$}}
\put(50,45){\makebox(0,0)[l]{\tiny$0$}}
%
\thicklines
\put(5, 15){\line(4, 1){40}}
\thinlines
\put(5, 15){\line(4, 3){40}}
\put(5, 35){\line(4, -3){40}}
\thicklines
\put(5, 35){\line(4, 1){40}}
\end{picture}
\begin{picture}(65, 50)(-10, 0)
\put(5,15){\makebox(0,0){$\bullet$}}
\put(5,35){\makebox(0,0){$\bullet$}}
\put(45,5){\makebox(0,0){$\circ$}}
\put(45,25){\makebox(0,0){$\circ$}}
\put(45,45){\makebox(0,0){$\circ$}}
\put(0,15){\makebox(0,0)[r]{\tiny$1'$}}
\put(0,35){\makebox(0,0)[r]{\tiny$0'$}}
\put(50,5){\makebox(0,0)[l]{\tiny$2$}}
\put(50,25){\makebox(0,0)[l]{\tiny$1$}}
\put(50,45){\makebox(0,0)[l]{\tiny$0$}}
\put(5, 15){\line(4, -1){40}}
\thicklines
\put(5, 15){\line(4, 1){40}}
\thinlines
%
\put(5, 35){\line(4, -3){40}}
\thicklines
\put(5, 35){\line(4, 1){40}}
\end{picture}
\begin{picture}(65, 50)(-10, 0)
\put(5,15){\makebox(0,0){$\bullet$}}
\put(5,35){\makebox(0,0){$\bullet$}}
\put(45,5){\makebox(0,0){$\circ$}}
\put(45,25){\makebox(0,0){$\circ$}}
\put(45,45){\makebox(0,0){$\circ$}}
\put(0,15){\makebox(0,0)[r]{\tiny$1'$}}
\put(0,35){\makebox(0,0)[r]{\tiny$0'$}}
\put(50,5){\makebox(0,0)[l]{\tiny$2$}}
\put(50,25){\makebox(0,0)[l]{\tiny$1$}}
\put(50,45){\makebox(0,0)[l]{\tiny$0$}}
\put(5, 15){\line(4, -1){40}}
\thicklines
\put(5, 15){\line(4, 1){40}}
\thinlines
\put(5, 15){\line(4, 3){40}}
%
\thicklines
\put(5, 35){\line(4, 1){40}}
\end{picture}

Hence the number of spanning tree containing the edges $\Set{0', 0}$ and $\Set{1', 1}$ are five.  
\end{Example}

We can prove the following. 

\begin{thm}\label{K_m,neigenvalues}
Let $m,n\geq 1$ and $m+n\geq 3$. 
Then eigenvalues of $\tilde H_{K_{m,n}}$ are 
$-2m^{n-2}n^{m-2}$, 
$-m^{n-2}n^{m-1}$, 
$-m^{n-1}n^{m-2}$ and
$m^{n-2}n^{m-2}(m+n-1)(m+n-2)$. 
The dimensions of eigenspaces of 
$-2m^{n-2}n^{m-2}$, 
$-m^{n-2}n^{m-1}$, 
$-m^{n-1}n^{m-2}$ and
$m^{n-2}n^{m-2}(m+n-1)(m+n-2)$ are
$(m-1)(n-1)$, 
$n-1$, 
$m-1$ and
$1$, 
respectively. 
\end{thm}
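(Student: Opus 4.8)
The plan is to exploit the circulant block structure of $\tilde H_{K_{m,n}}$ and reduce its spectrum to that of the $m\times m$ matrices produced by Theorem \ref{big to small}. I index the edges of $K_{m,n}$ by the pairs in $X\times Y$, ordering them so that $\tilde H_{K_{m,n}}$ becomes an $m\times m$ array of $n\times n$ blocks, the block in position $(i',k')$ pairing the edges incident to $i'\in X$ against those incident to $k'\in X$. Since $F_{K_{m,n}}$ is multilinear, the diagonal of $\tilde H_{K_{m,n}}$ vanishes; using Lemma \ref{the number of spanning trees in the complete bipartite graph including some edges} and abbreviating the three tree counts by $a=n^{m-3}m^{n-3}n(2m+n-2)$, $b=n^{m-3}m^{n-3}m(2n+m-2)$, $c=n^{m-3}m^{n-3}(m+n)(m+n-2)$, the diagonal blocks equal $a(J_{nn}-I_{n})$ and the off-diagonal blocks equal $(b-c)I_{n}+cJ_{nn}$. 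Every block is thus an $n\times n$ circulant matrix, so Theorem \ref{big to small} applies with $l=m$.

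Next I would compute the reduced matrices $\bar C_{k}$ for $0\le k\le n-1$. By Lemma \ref{cyclic eigenvector} the eigenvalue of each block on $\zz_{n,k}$ is its first row times $\zz_{n,k}$, and since $J_{nn}$ acts as $n$ on $\zz_{n,0}$ and as $0$ on $\zz_{n,k}$ for $k\ne 0$, I obtain $\bar C_{0}=(b+c(n-1))J_{mm}+(a(n-1)-b-c(n-1))I_{m}$ and, for $1\le k\le n-1$, $\bar C_{k}=(b-c)J_{mm}+(-a-b+c)I_{m}$. Each $\bar C_{k}$ has the form $\alpha J_{mm}+\lambda I_{m}$, so Lemma \ref{dcmdeterminant} gives its spectrum at once: $\bar C_{0}$ contributes $a(n-1)+(m-1)(b+c(n-1))$ once and $a(n-1)-b-c(n-1)$ with multiplicity $m-1$, while each $\bar C_{k}$ with $k\ne0$ contributes $-a+(m-1)(b-c)$ once and $-a-b+c$ with multiplicity $m-1$. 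By Theorem \ref{big to small} these values, ranging over all $k$, are precisely the eigenvalues of $\tilde H_{K_{m,n}}$: the single value from $\bar C_{0}$ has total multiplicity $1$, its repeated value multiplicity $m-1$, the common single value of the $n-1$ matrices $\bar C_{k}$ ($k\ne0$) has total multiplicity $n-1$, and their common repeated value multiplicity $(m-1)(n-1)$.

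Finally I would substitute the explicit $a,b,c$ and simplify. The identity $-a-b+c=-2\,n^{m-3}m^{n-3}mn$ produces the eigenvalue $-2m^{n-2}n^{m-2}$ of multiplicity $(m-1)(n-1)$; likewise $-a+(m-1)(b-c)=-n^{m-3}m^{n-3}mn^{2}$ and $a(n-1)-b-c(n-1)=-n^{m-3}m^{n-3}m^{2}n$ give $-m^{n-2}n^{m-1}$ (multiplicity $n-1$) and $-m^{n-1}n^{m-2}$ (multiplicity $m-1$); and the top eigenvalue $a(n-1)+(m-1)(b+c(n-1))$ collapses, after collecting terms, to $n^{m-3}m^{n-3}\,mn\,(m+n-1)(m+n-2)=m^{n-2}n^{m-2}(m+n-1)(m+n-2)$ of multiplicity $1$. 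The bulk of the work, and the only real obstacle, is this last family of polynomial identities; the eigenvalue coming from $\bar C_{0}$ is the messiest, but after expansion everything factors through $mn$ and the bracket $(m+n)^{2}-3(m+n)+2=(m+n-1)(m+n-2)$, while the other three identities reduce after cancellation to $-2mn$, $-mn^{2}$, and $-m^{2}n$ respectively.
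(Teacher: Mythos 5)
Your proposal is correct and follows essentially the same route as the paper: both write $\tilde H_{K_{m,n}}$ as an $m\times m$ array of $n\times n$ circulant blocks $a(J_{nn}-I_n)$ and $(b-c)I_n+cJ_{nn}$, reduce to the matrices $\bar C_k=\alpha J_{mm}+\lambda I_m$ via Theorem \ref{big to small}, read off their spectra from Lemma \ref{dcmdeterminant}, and simplify. Your algebraic identities (including the factorization $(m+n)^2-3(m+n)+2=(m+n-1)(m+n-2)$ for the top eigenvalue) all check out.
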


\begin{Rem}
Theorem \ref{K_m,neigenvalues} implies that one of the eigenvalues of $\tilde H_{K_{m,n}}$ is positive and that the others are negative. 
\end{Rem}

%

Theorem \ref{K_m,neigenvalues} implies Theorem \ref{The Hessian of the complete bipartite graph}. 

\begin{thm}\label{The Hessian of the complete bipartite graph}
Let $m,n\geq 1$ and $m+n\geq 3$. 
Then the determinant of $\tilde H_{K_{m,n}}$ is
\begin{multline*}
(-1)^{mn-1}2^{(m-1)(n-1)}m^{(mn-m-1)(n-1)}n^{(mn-n-1)(m-1)} \\
\cdot(m+n-1)(m+n-2). 
\end{multline*}
Hence the Hessian $\det H_{K_{m,n}}$ does not vanish. 
\end{thm}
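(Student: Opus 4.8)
The plan is to read off $\det\tilde H_{K_{m,n}}$ directly from Theorem~\ref{K_m,neigenvalues} as the product of the eigenvalues, each raised to the dimension of its eigenspace. The four eigenspaces have dimensions $(m-1)(n-1)$, $n-1$, $m-1$ and $1$, and these sum to $mn$, the size of $\tilde H_{K_{m,n}}$; hence the determinant is exactly
\begin{align*}
\det\tilde H_{K_{m,n}}
&=\left(-2m^{n-2}n^{m-2}\right)^{(m-1)(n-1)}\left(-m^{n-2}n^{m-1}\right)^{n-1} \\
&\quad\times\left(-m^{n-1}n^{m-2}\right)^{m-1}\,m^{n-2}n^{m-2}(m+n-1)(m+n-2).
\end{align*}
No further structural input is required; the remaining work is purely to rewrite this right-hand side in the closed form of the statement.

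I would split that rewriting into three bookkeeping tasks. First, the sign: the first three eigenvalues are negative and the last is positive for $m+n\ge 3$, so the sign is $(-1)^{e}$ with $e=(m-1)(n-1)+(n-1)+(m-1)$, which expands to $e=mn-1$, giving $(-1)^{mn-1}$ and matching the Remark after Theorem~\ref{K_m,neigenvalues}. Second, the power of $2$: the sole factor of $2$ comes from the coefficient $-2$ of the first eigenvalue, so it occurs to the power $(m-1)(n-1)$, yielding $2^{(m-1)(n-1)}$. Third, one collects the total exponent of $m$ and of $n$ by summing, over the four factors, each factor's exponent weighted by its multiplicity, while carrying the lone $(m+n-1)(m+n-2)$ from the top eigenvalue along unchanged.

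The main obstacle is precisely this last step: the exponent of $m$ (and, symmetrically, of $n$) is a sum of four products of linear expressions in $m,n$ weighted by the multiplicities, and it is easy to drop or miscount a term when the families are combined. I would therefore keep the two exponent sums symbolic, factor the results, and cross-check the final closed form on the smallest genuinely bipartite cases $K_{2,3}$ and $K_{3,3}$, where the eigenvalues and the determinant can also be computed directly; this guards against the kind of exponent slip that is easy to make here.

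Finally, the nonvanishing conclusion is immediate once the closed form is in hand. For $m+n\ge 3$ the factor $(m+n-1)(m+n-2)$ is a positive integer and every other factor is a nonzero power, so $\det\tilde H_{K_{m,n}}\neq 0$. Since $\tilde H_{K_{m,n}}$ is the specialization of the polynomial Hessian matrix $H_{K_{m,n}}$ at $x_i=1$ for all $i$, the polynomial $\det H_{K_{m,n}}$ takes a nonzero value at that point and therefore does not vanish identically.
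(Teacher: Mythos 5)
Your approach --- reading off $\det\tilde H_{K_{m,n}}$ as the product of the eigenvalues of Theorem~\ref{K_m,neigenvalues}, each raised to the dimension of its eigenspace --- is exactly the paper's route (the paper offers no more detail than the assertion that Theorem~\ref{K_m,neigenvalues} implies the result), and the sign, the power of $2$, and the factor $(m+n-1)(m+n-2)$ come out as you describe. However, the step you correctly single out as the main risk is where the proposal cannot be completed as written: the exponent sums do not simplify to the exponents printed in the statement. Carrying out the bookkeeping, the total exponent of $m$ is
\[
(n-2)\bigl[(m-1)(n-1)+(n-1)+1\bigr]+(n-1)(m-1)=m(n-1)^{2}-1,
\]
whereas the statement has $(mn-m-1)(n-1)=m(n-1)^{2}-(n-1)$; these differ by $n-2$. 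Symmetrically, the exponent of $n$ should be $n(m-1)^{2}-1$ rather than $(mn-n-1)(m-1)$. The product of the eigenvalues therefore equals
\[
(-1)^{mn-1}\,2^{(m-1)(n-1)}\,m^{m(n-1)^{2}-1}\,n^{n(m-1)^{2}-1}\,(m+n-1)(m+n-2),
\]
which is the printed formula multiplied by $m^{n-2}n^{m-2}$.

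Your own proposed cross-check on $K_{2,3}$ detects this. There the eigenvalues are $-4$ (multiplicity $3$, since the first and third families coincide), $-6$ (multiplicity $2$) and $24$ (multiplicity $1$), so the product is $24\cdot(-6)^{2}\cdot(-4)^{3}=-55296$; a direct computation of the $6\times6$ matrix with off-diagonal entries $5,4,5$ from Lemma~\ref{the number of spanning trees in the complete bipartite graph including some edges} confirms this value. The printed closed form evaluates instead to $-1\cdot4\cdot2^{6}\cdot3^{2}\cdot12=-27648$. So the mismatch lies in the stated formula, not in your method, and the nonvanishing conclusion is unaffected since the corrected value is still a nonzero multiple of $(m+n-1)(m+n-2)>0$. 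You should derive and state the corrected exponents (or flag the discrepancy) rather than assert that the simplification lands on the printed form.
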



Let us prove Theorem \ref{K_m,neigenvalues}. 
%
We compute the eigenvalues of the Hessian matrices of $K_{X, Y}=K_{m,n}$. 
For $e, e'\in E(K_{X, Y})$, we define $a(e, e')$ by
\begin{align*}
a(e, e')=
\begin{cases}
0 & (e=e'),  \\
n(2m+n-2) & (e\cap e'\in X), \\
m(2n+m-2) & (e\cap e'\in Y), \\
(m+n)(m+n-2) & \text{otherwise}. 
\end{cases}
\end{align*}
We consider a group action to $V(K_{m,n})$ as follows:
Let $G$ be the cyclic group generated by $\sigma$ of order $n$.    
Let
\begin{align*}
\begin{cases}
\sigma(i')=i' & \text{for $i'\in X=\Set{0', 1', \ldots, (m-1)'}$}, \\
\sigma(i)=i+1 \pmod{n} & \text{for $i \in Y=\Set{0, 1, \ldots, n-1}$}.  
\end{cases}
\end{align*}
The action of $G$ on $V(K_{m,n})$ induces an action on $E(K_{m,n})$ by
\begin{align*}
\sigma\Set{i', j}=\Set{\sigma(i'), \sigma(j)}=\Set{i', \sigma(j)}
\end{align*}
for $i' \in X$ and $j \in Y$. 
We define 
\begin{align*}
C^{ij}&=\left(a(\sigma^{k}e_{i}, \sigma^{k'}e_{j})\right)_{0\leq k, k'\leq n-1}, \\
C&=\left(C^{ij}\right)_{0\leq i. j\leq m-1}, 
\end{align*}
where $e_{i}=\Set{i', 0}\in E(K_{m,n})$. 
Note that $C^{ij}$ is an $n\times n$ cyclic matrix by the definition of the action. 
We have 
\begin{align*}
E(K_{m,n})=\bigsqcup_{i=0}^{m-1}\Set{\sigma^{k}e_{i}|0\leq k\leq n-1}. 
\end{align*}
Hence $m^{n-3}n^{m-3}C$ is $\tilde H_{K_{m,n}}$ by Lemma \ref{the number of spanning trees in the complete bipartite graph including some edges}.

Let us calculate eigenvalues of $C$ by Theorem \ref{big to small}.  
Let 
$a=n(2m+n-2)$, $b=m(2n+m-2)$, $c=(m+n)(m+n-2)$, $A=aJ_{nn}+(-a)I_{n}$, and $B=cJ_{nn}+(b-c)I_{n}$. 
Then we have $C^{ij}=A$ if $i=j$, otherwise $C^{ij}=B$. 
By Lemma \ref{dcmdeterminant}, we obtain the eigenvalues of $A$ and $B$. 
The eigenvalues of $A$ are $(n-1)a$ and $-a$. 
The dimensions of eigenspaces of $(n-1)a$ and $-a$ are $1$ and $n-1$, respectively. 
The eigenvalues of $B$ are $b+(n-1)c$ and $b-c$. 
The dimensions of eigenspaces of $b+(n-1)c$ and $b-c$ are $1$ and $n-1$, respectively. 
For $k=0$, define
\begin{align*}
c_{ij}^{(k)}=
\begin{cases}
(n-1)a & (i=j), \\
b+(n-1)c & (i\neq j). 
\end{cases}
\end{align*}
For $1\leq k\leq m-1$, define
\begin{align*}
c_{ij}^{(k)}=
\begin{cases}
-a & (i=j), \\
b-c & (i\neq j). 
\end{cases}
\end{align*}
For $0\leq k\leq m-1$, we define 
\begin{align*}
\bar C^{(k)}=\left(c_{ij}^{(k)}\right)_{1\leq i,j \leq m}
\end{align*}
Then we have 
\begin{align*}
\bar C^{(0)}&=(b+(n-1)c)J_{mm}+((n-1)a-(b+(n-1)c))I_{m}, \\
\bar C^{(k)}&=(b-c)J_{mm}+(-a-(b-c))I_{m}
\end{align*}
for $1\leq k\leq m-1$. 
By Lemma \ref{dcmdeterminant}, we obtain the eigenvalues of $\bar C^{(0)}$ and $\bar C^{(k)}$. 
The eigenvalues of $\bar C^{(0)}$ are 
\begin{align*}
(n-1)a-(b+(n-1)c)+m(b+(n-1)c) \\
=mn(m+n-1)(m+n-2), 
\end{align*}
and
\begin{align*}
(n-1)a-(b+(n-1)c)=-m^{2} n. 
\end{align*}
The dimensions of the eigenspaces are
$1$ and $m-1$, respectively. 
For $1\leq k\leq m-1$, the eigenvalues of $\bar C^{k}$ are 
\begin{align*}
-a-(b-c)+m(b-c)=-mn^{2}
\end{align*}
and
\begin{align*}
-a-(b-c)=-2mn. 
\end{align*}
The dimensions of the eigenspaces are
$1$ and $m-1$, respectively. 
Theorem \ref{big to small} implies Proposition \ref{K_m,neigenvalues}.



\section{The Lefschetz property for an algebra associated to a graphic matroid}\label{The Lefschetz property for an algebra constructed from a graph}


In this section, we will show the Lefschetz property of the algebra associated to the graphic matroid of the complete graph and the complete bipartite graph with at most five vertices (Theorems \ref{SLP for the complete graph} and \ref{SLP for the complete bipartite graph}).

\begin{Definition}\label{SLP}
Let $A=\bigoplus_{k=0}^{s} A_{k}$, $A_{s}\neq \zero$, 
be a graded Artinian algebra. 
We say that $A$ has the \emph{strong Lefschetz property} 
if there exists an element $L \in A_{1}$ such that the multiplication map $\times L^{s-2k}\colon A_{k}\to A_{s-k}$ is bijective for all $k\leq \frac{s}{2}$. 
We call $L \in A_{1}$ with this property a \emph{strong Lefschetz element}. 
\end{Definition}

Let $\KK$ be a field of characteristic zero.  
For a homogeneous polynomial $F\in \KK[x_{1}, x_{2}, \ldots, x_{N}]$, we define $\Ann(F)$ by
\begin{align*}
\Ann(F)=\Set{P\in \KK[x_{1}, \ldots, x_{N}]| P\left(\frac{\partial}{\partial x_{1}}, 
 \ldots, \frac{\partial}{\partial x_{N}}\right)F=0}. 
\end{align*}
Then $\Ann(F)$ is a homogeneous ideal of $\KK[x_{1}, \ldots, x_{N}]$. 
We consider $A=\KK[x_{1}, \ldots, x_{N}]/\Ann(F)$. 
Since $\Ann(F)$ is homogeneous, the algebra $A$ is graded. 
Furthermore $A$ is an Artinian Gorenstein algebra. 
Conversely, a graded Artinian Gorenstein algebra $A$ has the presentation
\begin{align*}
A=\KK[x_{1}, \ldots, x_{N}]/\Ann(F)
\end{align*}
for some homogeneous polynomial $F\in \KK[x_{1}, x_{2}, \ldots, x_{N}]$. 
We decompose $A$ into the homogeneous components $A_{k}$. 
Let $\Lambda_{k}$ be the basis for $A_{k}$. 
We define the matrix $H_{F}^{(k)}$ by 
\begin{align*}
H_{F}^{(k)}=
\left(
e_{i}\left(\frac{\partial}{\partial x_{1}},\ldots, \frac{\partial}{\partial x_{N}}\right)
e_{j}\left(\frac{\partial}{\partial x_{1}},\ldots, \frac{\partial}{\partial x_{N}}\right)
F
\right)_{e_{i},e_{j}\in \Lambda_{k}}. 
\end{align*}
The determinant of $H_{F}^{(k)}$ is called the $k$th \emph{Hessian} of $F$ with respect to the basis $\Lambda_{k}$. 

\begin{Rem}\label{0th Hessian}
We define the $0$th Hessian of $F$ is $F$.  
\end{Rem}

There is a criterion for the strong Lefschetz property for a graded Artinian Gorenstein algebra.  

\begin{Theorem}[Watanabe \cite{W2000}, Maeno--Watanabe \cite{MR2594646}]\label{criterion}
Consider the graded Artinian Gorenstein algebra $A$ with the following  presentation and decomposition: 
$A=\KK[x_{1}, x_{2}, \ldots, x_{N}]/\Ann(F)=\bigoplus_{k=0}^{s} A_{k}$. 
Let $L=a_{1}x_{1}+a_{2}x_{2}+\cdots+a_{N}x_{N}$. 
The multiplication map 
$\times L^{s-2k}\colon A_{k}\to A_{s-k}$ is bijective 
if and only if 
\begin{align*}
\det H_{F}^{(k)}(a_{1}, a_{2}, \ldots, a_{N})\neq 0. 
\end{align*}
\end{Theorem}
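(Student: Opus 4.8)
The plan is to reduce the bijectivity statement to the nondegeneracy of an explicit symmetric bilinear form on $A_k$, and then to identify the Gram matrix of that form with $H_F^{(k)}(a)$ up to a nonzero scalar. First I would recall the Macaulay/apolarity structure of $A=\KK[x_1,\ldots,x_N]/\Ann(F)$. Writing $R=\KK[x_1,\ldots,x_N]$ for the ring of operators, since $F$ is homogeneous of degree $s$ the top component $A_s$ is one-dimensional, and the assignment $[P]\mapsto P(\partial)F$ (a constant, for $P\in R_s$) gives an isomorphism $A_s\cong\KK$: indeed $[P]=0$ means $P\in\Ann(F)$, i.e. $P(\partial)F=0$. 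The multiplication map $\Phi\colon A_{s-k}\times A_k\to A_s\cong\KK$, $\Phi([P],[Q])=P(\partial)Q(\partial)F$, is the Gorenstein (Poincaré duality) pairing; it is well defined (if $P\in\Ann(F)$ then $P(\partial)Q(\partial)F=Q(\partial)P(\partial)F=0$) and perfect. In particular $\dim_{\KK}A_k=\dim_{\KK}A_{s-k}$, so $\times L^{s-2k}$ is bijective if and only if it is injective.

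Next I would introduce the symmetric bilinear form $B_L$ on $A_k$ defined by $B_L(p,q)=[\,p\,q\,L^{s-2k}\,]\in A_s\cong\KK$ (this lands in $A_s$ because $\deg(pqL^{s-2k})=2k+(s-2k)=s$), and observe that $B_L(p,q)=\Phi(L^{s-2k}p,\,q)$. Since $\Phi$ is perfect, for fixed $p$ the functional $B_L(p,\cdot)$ vanishes identically exactly when $L^{s-2k}p=0$. Hence $\times L^{s-2k}$ is injective if and only if $B_L$ is nondegenerate, and therefore the theorem is equivalent to the assertion that the Gram matrix of $B_L$ in the basis $\Lambda_k$ has nonzero determinant.

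The computational heart is to identify that Gram matrix. For $e_i,e_j\in\Lambda_k$ the entry is $B_L(e_i,e_j)=\bigl(e_i(\partial)e_j(\partial)L^{s-2k}(\partial)\bigr)F$; as these are all constant-coefficient differential operators they commute, so this equals $L^{s-2k}(\partial)\bigl[e_i(\partial)e_j(\partial)F\bigr]$. Here I would invoke the polarization identity: for any homogeneous $G$ of degree $s-2k$ one has $\left(\sum_i a_i\frac{\partial}{\partial x_i}\right)^{s-2k}G=(s-2k)!\,G(a_1,\ldots,a_N)$, which is proved by reducing to monomials $G=x^{\beta}$ and using $\partial^{\gamma}x^{\beta}=0$ unless $\gamma=\beta$. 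Applying this with $G=e_i(\partial)e_j(\partial)F$, which is homogeneous of degree $s-2k$ since $e_i,e_j$ have degree $k$ and $F$ has degree $s$, yields $B_L(e_i,e_j)=(s-2k)!\,\bigl(e_i(\partial)e_j(\partial)F\bigr)(a_1,\ldots,a_N)$. Thus the Gram matrix of $B_L$ equals $(s-2k)!\,H_F^{(k)}(a_1,\ldots,a_N)$, and since $\KK$ has characteristic zero $(s-2k)!\neq 0$, so nondegeneracy of $B_L$ is equivalent to $\det H_F^{(k)}(a)\neq 0$, completing the argument.

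The hard part will be the structural input rather than the computation: establishing (or correctly importing) the perfectness of the Gorenstein pairing $\Phi$, on which the reduction ``bijective $\iff$ $B_L$ nondegenerate'' rests, is the one nontrivial fact that must be assumed from the Artinian Gorenstein structure of $A$. The remaining subtleties are bookkeeping — checking that $B_L$ is independent of the chosen representatives modulo $\Ann(F)$, verifying the degree count so that $e_i(\partial)e_j(\partial)F$ is exactly homogeneous of degree $s-2k$, and confirming the commutation and reassociation of the operators — all of which are routine but should be stated explicitly.
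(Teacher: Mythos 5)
The paper does not prove this statement; it imports it verbatim from Watanabe and Maeno--Watanabe, so there is no internal proof to compare against. Your argument is correct and is essentially the standard proof from those references: reduce bijectivity to injectivity via the perfect pairing $A_{s-k}\times A_k\to A_s\cong\KK$, reformulate injectivity of $\times L^{s-2k}$ as nondegeneracy of $B_L(p,q)=pqL^{s-2k}$, and identify the Gram matrix with $(s-2k)!\,H_F^{(k)}(a)$ via the polarization identity $\bigl(\sum_i a_i\partial_i\bigr)^{d}G=d!\,G(a)$ for $G$ homogeneous of degree $d$, using characteristic zero to discard the factorial. One small remark: the perfectness of $\Phi$, which you flag as the fact to be ``imported,'' actually follows directly from the definition of $\Ann(F)$ --- if $Q\notin\Ann(F)$ then $Q(\partial)F$ is a nonzero homogeneous polynomial of degree $s-k$, so some monomial operator $\partial^{\gamma}$ with $|\gamma|=s-k$ pairs nontrivially with it; applying this in both slots gives $\dim A_k=\dim A_{s-k}$ and perfectness without invoking any further Gorenstein machinery. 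With that observation your proof is self-contained.
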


\begin{Definition}
For a graph $\Gamma$ with $N$ edges, we define the graded Artinian Gorenstein algebra $A_{M(\Gamma)}$ by 
\begin{align*}
A_{M(\Gamma)}=\KK[x_{1}, x_{2}, \ldots, x_{N}]/\Ann(F_{\Gamma}). 
\end{align*}
\end{Definition}
If a graph $\Gamma$ has $n+1$ vertices, then the top degree of $A_{M(\Gamma)}$ is $n$.

\begin{Theorem}\label{SLP for the complete graph}
The algebra $A_{M(K_{n})}$ has the strong Lefschetz property for $n \leq5$. 
The element $x_{1}+\cdots+x_{N}$ is a strong Lefschetz element.  
\end{Theorem}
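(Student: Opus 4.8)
The plan is to reduce the theorem, via the Hessian criterion (Theorem~\ref{criterion}), to the nonvanishing of only two determinants, both of which are controlled by results already in hand. Since $K_{n}$ has $n$ vertices, the top degree of $A_{M(K_{n})}$ is $s=n-1$. By Definition~\ref{SLP} it suffices to prove that the multiplication map $\times L^{s-2k}\colon A_{k}\to A_{s-k}$ is bijective for every $0\le k\le s/2$, where $L=x_{1}+\cdots+x_{N}$; by Theorem~\ref{criterion} this is in turn equivalent to $\det H_{F_{K_{n}}}^{(k)}(1,\ldots,1)\neq 0$ for each such $k$.

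First I would cut down the range of $k$ that requires genuine work. For $n\le 5$ we have $s=n-1\le 4$, so $\lfloor s/2\rfloor\le 2$. When $k=s/2$ (which happens for $n=3$ and $n=5$), the exponent $s-2k$ is zero, so $\times L^{0}$ is the identity map on $A_{s/2}$ and is therefore bijective for free; equivalently, $H^{(s/2)}$ is a constant matrix whose determinant is nonzero because the Gorenstein pairing $A_{s/2}\times A_{s/2}\to A_{s}$ is perfect. Hence only the indices $k=0$ and $k=1$ call for an actual computation.

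Next I would dispatch these two remaining cases using the earlier material. For $k=0$, Remark~\ref{0th Hessian} identifies $H^{(0)}$ with $F_{K_{n}}$, and $F_{K_{n}}(1,\ldots,1)$ is the number of spanning trees of $K_{n}$, namely $n^{n-2}$ by Cayley's formula, which is positive and in particular nonzero. For $k=1$, Theorem~\ref{The Hessian of the complete graph} gives $\det\tilde H_{K_{n}}\neq 0$ for $n\ge 3$; as recorded in the introduction, this forces $x_{1},\ldots,x_{N}$ to be a basis of $A_{1}$, so the first Hessian matrix of the algebra coincides with $\tilde H_{K_{n}}$ and satisfies $\det H^{(1)}(1,\ldots,1)=\det\tilde H_{K_{n}}\neq 0$. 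Combining the three cases, every relevant Hessian is nonzero at $(1,\ldots,1)$, whence Theorem~\ref{criterion} delivers the strong Lefschetz property with strong Lefschetz element $L=x_{1}+\cdots+x_{N}$ (the cases $n\le 2$ being trivial).

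The step I expect to matter most is the bookkeeping of exactly which $k$ needs a Hessian: the content of the hypothesis $n\le 5$ is precisely that $\lfloor (n-1)/2\rfloor\le 2$, so that beyond the trivial middle map one meets only the $0$th and $1$st Hessians, both already computed. For $n\ge 6$ the index $k=2$ would produce a genuinely nontrivial map $\times L\colon A_{2}\to A_{3}$ and demand a second Hessian that is not evaluated in this paper, which is exactly where the argument would stall.
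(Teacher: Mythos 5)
Your proof is correct and follows essentially the same route as the paper: reduce via Theorem~\ref{criterion} to the nonvanishing of $\det H^{(k)}_{F_{K_n}}(1,\dots,1)$ for $k\le\lfloor (n-1)/2\rfloor$, and invoke Theorem~\ref{The Hessian of the complete graph} for $k=1$. In fact you are more complete than the paper's own one-line argument, which only cites the first Hessian and leaves the $k=0$ case (Remark~\ref{0th Hessian} plus Cayley's formula) and the trivial middle map $k=s/2$ implicit; your explicit bookkeeping of why $n\le 5$ is exactly the range where no second Hessian is needed is the right observation.
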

\begin{proof}
Let $n \leq5$. 
It follows from Theorem \ref{The Hessian of the complete graph} that $H_{F_{K_{n}}}^{(1)}=H_{K_{n}}$, and $\det H_{F_{K_{n}}}^{(1)}(1, \ldots, 1)=\det \tilde H_{K_{n}}\neq 0$. 
Hence $A_{K_{n}}$ has the strong Lefschetz property, and 
the element $x_{1}+\cdots+x_{N}$ is a strong Lefschetz element.  
\end{proof}

Similarly Theorem \ref{SLP for the complete bipartite graph} follows from Remark \ref{0th Hessian} and Theorem \ref{The Hessian of the complete bipartite graph}. 

\begin{Theorem}\label{SLP for the complete bipartite graph}
The algebra $A_{M(K_{m,n})}$ has the strong Lefschetz property for $m+n \leq5$ and $m,n\geq 1$. 
The element $x_{1}+\cdots+x_{N}$ is a strong Lefschetz element.  
\end{Theorem}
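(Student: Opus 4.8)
The plan is to apply the Hessian criterion of Theorem \ref{criterion} with the candidate element $L = x_1 + \cdots + x_N$, that is, with $(a_1,\ldots,a_N) = (1,\ldots,1)$. Writing $A = A_{M(K_{m,n})} = \bigoplus_{k=0}^{s} A_k$, the graph $K_{m,n}$ has $m+n$ vertices, so the socle degree is $s = m+n-1$. By Definition \ref{SLP} together with Theorem \ref{criterion}, showing that $L$ is a strong Lefschetz element amounts to checking $\det H_{F_{K_{m,n}}}^{(k)}(1,\ldots,1)\neq 0$ for every $k$ with $0 \le k \le s/2$. I would therefore first reduce the whole statement to this finite family of nonvanishing conditions, one for each such $k$.

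The role of the hypothesis $m+n\le 5$ is that it forces $s = m+n-1 \le 4$, so the only indices in range are $k\in\{0,1,2\}$. For $k=0$ I would invoke Remark \ref{0th Hessian}: the $0$th Hessian is $F_{K_{m,n}}$ itself, and $F_{K_{m,n}}(1,\ldots,1)$ is the number of spanning trees of $K_{m,n}$, the positive integer $m^{n-1}n^{m-1}$, which is nonzero. For $k=1$ I would appeal to Theorem \ref{The Hessian of the complete bipartite graph}: since $\det\tilde H_{K_{m,n}}\neq 0$, the polynomial $\det H_{K_{m,n}}$ is not identically zero, and, as observed in the introduction, this forces the full set of variables to be a basis of $A_1$. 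Consequently the first Hessian $H_{F_{K_{m,n}}}^{(1)}$ of the algebra coincides with the graph Hessian $H_{K_{m,n}}$, and evaluating at $x_i=1$ gives $\det H_{F_{K_{m,n}}}^{(1)}(1,\ldots,1) = \det\tilde H_{K_{m,n}}\neq 0$, which is exactly the criterion for $k=1$.

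What remains is the largest index. When $m+n=5$ we have $s=4$, and the surviving value $k=2$ satisfies $k=s/2$, so the relevant map is $\times L^{s-2k} = \times L^{0} = \mathrm{id}\colon A_2\to A_2$, the identity, which is bijective for free and needs no Hessian. The same occurs at $k=1=s/2$ when $m+n=3$ (so $s=2$), while for the degenerate case $m+n=2$ (so $s=1$, the single edge $K_{1,1}$) only $k=0$ lies in range; thus the first Hessian of Theorem \ref{The Hessian of the complete bipartite graph} is genuinely decisive only for $m+n\in\{4,5\}$. Collecting the checks, $\times L^{s-2k}$ is bijective for every $k\le s/2$, so $L = x_1+\cdots+x_N$ is a strong Lefschetz element for all $K_{m,n}$ with $m+n\le 5$ and $m,n\ge 1$, mirroring the proof of Theorem \ref{SLP for the complete graph} with Theorem \ref{The Hessian of the complete bipartite graph} in place of Theorem \ref{The Hessian of the complete graph}.

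The main obstacle I anticipate is conceptual rather than computational: the paper determines only the first Hessian, whereas the strong Lefschetz property a priori demands the nonvanishing of every intermediate Hessian $H^{(k)}$ for $1\le k\le s/2$. The restriction $m+n\le 5$ is exactly what collapses this list to the single first Hessian, since then the $0$th Hessian is automatic and the middle Hessian, when $s$ is even, corresponds to the identity map. I would thus be careful to justify both that the top index produces the identity and needs no computation, and that the identification $H_{F_{K_{m,n}}}^{(1)} = H_{K_{m,n}}$ is legitimate; beyond these two remarks, the entire content of the proof is carried by Theorem \ref{The Hessian of the complete bipartite graph}.
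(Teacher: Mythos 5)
Your proposal is correct and follows essentially the same route as the paper, which proves this theorem by combining Remark \ref{0th Hessian} with Theorem \ref{The Hessian of the complete bipartite graph} exactly as in the proof of Theorem \ref{SLP for the complete graph}. Your write-up simply makes explicit the details the paper leaves implicit, namely that $m+n\le 5$ forces $s=m+n-1\le 4$ so that only the $0$th and first Hessians matter, the middle map $\times L^{0}$ being the identity.
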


\begin{Rem}
In \cite{NY2019}, it is shown that the Hessian of any simple graph does not vanish. 
In the paper, it is shown that the Kirchhoff polynomial of a complete graph is the irreducible relative invariant of a prehomogeneous vector space. 
By this fact, we can compute the Hessian as polynomials of the complete graph, and compute the Hessians of any graphs. 
\end{Rem}



\bibliographystyle{amsplain-url}
\bibliography{mr}


\end{document}